\numberwithin{equation}{section}
\numberwithin{figure}{section}
\theoremstyle{plain}
\newtheorem{thm}{\protect\theoremname}
  \theoremstyle{plain}
  \newtheorem{prop}[thm]{\protect\propositionname}
  \theoremstyle{plain}
  \newtheorem{cor}[thm]{\protect\corollaryname}
  \theoremstyle{remark}
  \newtheorem*{rem*}{\protect\remarkname}
  \theoremstyle{plain}
  \newtheorem{lem}[thm]{\protect\lemmaname}
  \theoremstyle{definition}
  \newtheorem{example}[thm]{\protect\examplename}
  \theoremstyle{definition}
  \newtheorem{defn}[thm]{\protect\definitionname}
  \theoremstyle{plain}
  \newtheorem*{conjecture*}{\protect\conjecturename}
\DeclareMathOperator{\Tr}{Tr}
\DeclareMathOperator{\adj}{adj}
\newcommand{\incent}[1][k]{\ensuremath{\tensor*[^{#1}]{\vec{\mathbf{v}}}{}}}
\newcommand{\nbcent}[1][k]{\ensuremath{\tensor*{\vec{\mathbf{v}}}{^{#1}}}}
\newcommand{\errata}[1]{{\color{red}[#1]}}
\providecommand{\conjecturename}{Conjecture}
\providecommand{\corollaryname}{Corollary}
\providecommand{\definitionname}{Definition}
\providecommand{\examplename}{Example}
\providecommand{\lemmaname}{Lemma}
\providecommand{\propositionname}{Proposition}
\providecommand{\remarkname}{Remark}
\providecommand{\theoremname}{Theorem}
\begin{document}

\title{Non-backtracking Spectrum: Unitary Eigenvalues and Diagonalizability}

\author{Leo Torres}

\date{%
    {\small Network Science Institute,
    Northeastern University\\
    \url{leo@leotrs.com}}
}

\maketitle

\begin{abstract}
Much effort has been spent on characterizing the spectrum of the non-backtracking
matrix of certain classes of graphs, with special emphasis on the
leading eigenvalue or the second eigenvector. Much less attention
has been paid to the eigenvalues of small magnitude; here, we fully
characterize the eigenvalues with magnitude equal to one. We relate
the multiplicities of such eigenvalues to the existence of specific
subgraphs. We formulate a conjecture on necessary and sufficient conditions
for the diagonalizability of the non-backtracking matrix. As an application,
we establish an interlacing-type result for the Perron eigenvalue.
\end{abstract}
\tableofcontents{}

\section{Introduction}

\counterwithin{thm}{section}

A walk is called \emph{backtracking} if it returns to a node immediately
after leaving it, i.e. if it contains a sub-walk of the type $i\!\to\!j\!\to\!i$.
The \emph{non-backtracking matrix} is the transition matrix of a random
walker that does not perform backtracks, and it has received much
attention lately. The main hurdle in studying the eigenvalue spectrum
of the non-backtracking matrix is that it is not normal. This means
that many standard tools in spectral graph theory do not apply to
it as some of them apply only to symmetric matrices such as the adjacency
and Laplacian matrices. In view of the spectral theorem, non-normality
implies that the non-backtracking matrix does not admit a unitary
basis of eigenvectors. However, it may still admit a basis of eigenvectors
that is non-unitary or, equivalently, it may be diagonalized by a
non-unitary matrix. In this work we study this possibility. For simplicity,
we use the ``NB-'' prefix to mean ``non-backtracking''. For example,
we use \emph{NB-matrix }and \emph{NB-eigenvalue} to refer to the matrix
and\emph{ }to one of its eigenvalues, respectively. All graphs considered
are simple, undirected, unweighted, and connected.

We study the diagonalizability of the NB-matrix by considering three
different types of graphs: those containing zero cycles (i.e. trees),
exactly one cycle, and two or more cycles. These graphs allow for
different long-term behaviors of NB-walks, which are codified in the
NB-eigenvectors. Indeed, if the graph is a tree, every NB-walk will
die out as soon as it reaches a node of degree one. Accordingly, every
NB-eigenvalue of a tree is zero and the NB-matrix is never diagonalizable.
If the graph contains exactly one cycle then every NB-walk must either
die out eventually or continue to go around the cycle forever. Accordingly,
the NB-spectrum of these graphs contains a cyclic group, namely the
$n^{th}$ roots of unity where $n$ is the number of nodes in the
cycle. Further, the NB-matrix of a cycle graph (a.k.a. circle graph)
is a block-permutation matrix, which is always diagonalizable. Lastly,
if the graph contains two or more cycles then the NB-walks may have
complex long-term behaviors and, accordingly, the NB-eigenvalues no
longer have a straightforward characterization as in the previous
two cases. In this latter case, we find that under mild assumptions,
and assuming a conjecture we formulate later, the NB-matrix is diagonalizable,
and we exhibit some of the properties of the basis of eigenvectors.

Our approach to study graphs with at least two cycles is based on
the fact that a matrix is diagonalizable if and only if each of its
eigenvalues has equal algebraic and geometric multiplicities. We study
the multiplicities of each possible eigenvalue according to its magnitude.
Let the graph $G$ be given and let $\lambda$ be a NB-eigenvalue
of $G$. If $|\lambda|<1$, we say $\lambda$ is an ``inner'' eigenvalue,
while if $1<|\lambda|<\rho$ we say $\lambda$ is ``outer''; here
$\rho$ is the spectral radius of the matrix. If $|\lambda|=1$ we
call it ``unit'' or ``unitary'', and finally if $|\lambda|=\rho$,
we say $\lambda$ is a ``leading'' eigenvalue; see Figure \ref{fig:eigenvalues-by-magnitude}.
The multiplicities of inner and leading eigenvalues are well known,
though here we revisit these results for completeness. The case of
the eigenvalues $\lambda=\pm1$ is also well-known.

\begin{figure}
\begin{centering}
\includegraphics[scale=0.9]{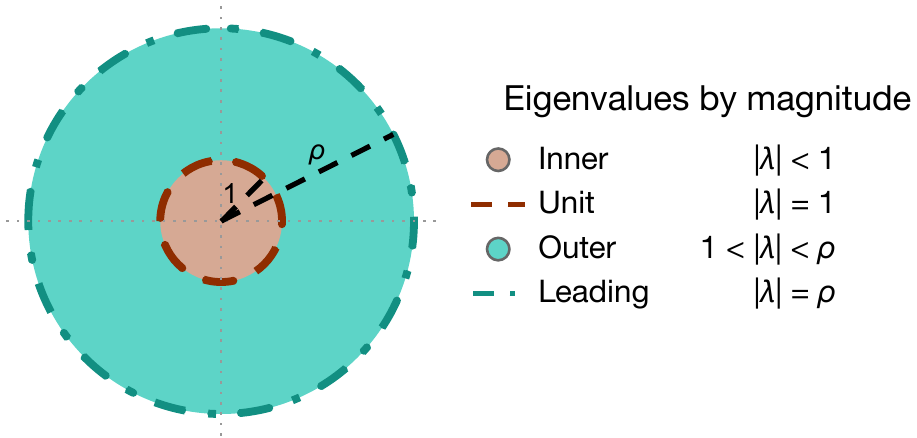}
\par\end{centering}
\caption{\label{fig:eigenvalues-by-magnitude}Eigenvalue categorization by
magnitude in the complex plane.}
\end{figure}

We thus focus on the unit and outer eigenvalues. The main contribution
of this work is two-fold: we compute the multiplicities of complex
unitary eigenvalues, and conjecture that in most cases the multiplicity
of outer eigenvalues is always one, and thus they do not pose a hurdle
to diagonalizability. 

In the case of unit eigenvalues, we explicitly compute the unit eigenvalues
and corresponding eigenvectors for any arbitrary graph. First, we
prove that if a NB-eigenvalue $\lambda$ is unitary, then $\lambda$
must be a root of unity. In other words, there are no unit NB-eigenvalues
with irrational argument. Then, we show that eigenvectors of unit
eigenvalues are localized to specific subgraphs (a.k.a. \emph{motifs}).
Consider a set of nodes $\mathcal{C}$ of $r$ nodes in $G$. $G$
will have a unit eigenvalue associated to $\mathcal{C}$, and the
corresponding eigenvector will be supported on $\mathcal{C}$ (i.e.
it will be zero outside of $\mathcal{C}$), if one of the following
holds; see Figure \ref{fig:collar-pendant}.
\begin{enumerate}
\item \label{enu:odd}If $\mathcal{C}$ induces a cycle, $r$ is odd, and
all nodes in $\mathcal{C}$ have degree $2$ in $G$, except for exactly
one node which may have arbitrary degree. In this case, $\mathcal{C}$
is called a \emph{pendant} of size $r$.
\item \label{enu:even}If $\mathcal{C}$ induces a cycle, $r$ is even,
and all nodes in $\mathcal{C}$ have degree $2$ in $G$, except perhaps
for two diametrically opposite nodes which may have arbitrary degrees.
(These two nodes may or may not be neighbors of each other.) In this
case, $\mathcal{C}$ is called a \emph{collar} of size $r$. 
\item \label{enu:bracelet}If $\mathcal{C}$ induces a ``figure eight''
graph made of two cycles of the same length joined at one node, $r$
is even, and all nodes in $\mathcal{C}$ have degree $2$ in $G$,
except perhaps for the one node at which the two cycles meet, which
may have arbitrary degree. In this case, $\mathcal{C}$ is called
a \emph{bracelet} of size $r$. Note a bracelet can be considered
a degenerate form of a collar.
\end{enumerate}
If $G$ contains a set $\mathcal{C}$ that is collar, a pendant, or
a bracelet of size $r$, then the $r^{th}$ roots of unity will all
be NB-eigenvalues of $G$, and the corresponding eigenvectors will
be supported on $\mathcal{C}$. We prove this result in Section (\ref{sec:complex-roots}).

\begin{figure}
\begin{centering}
\includegraphics{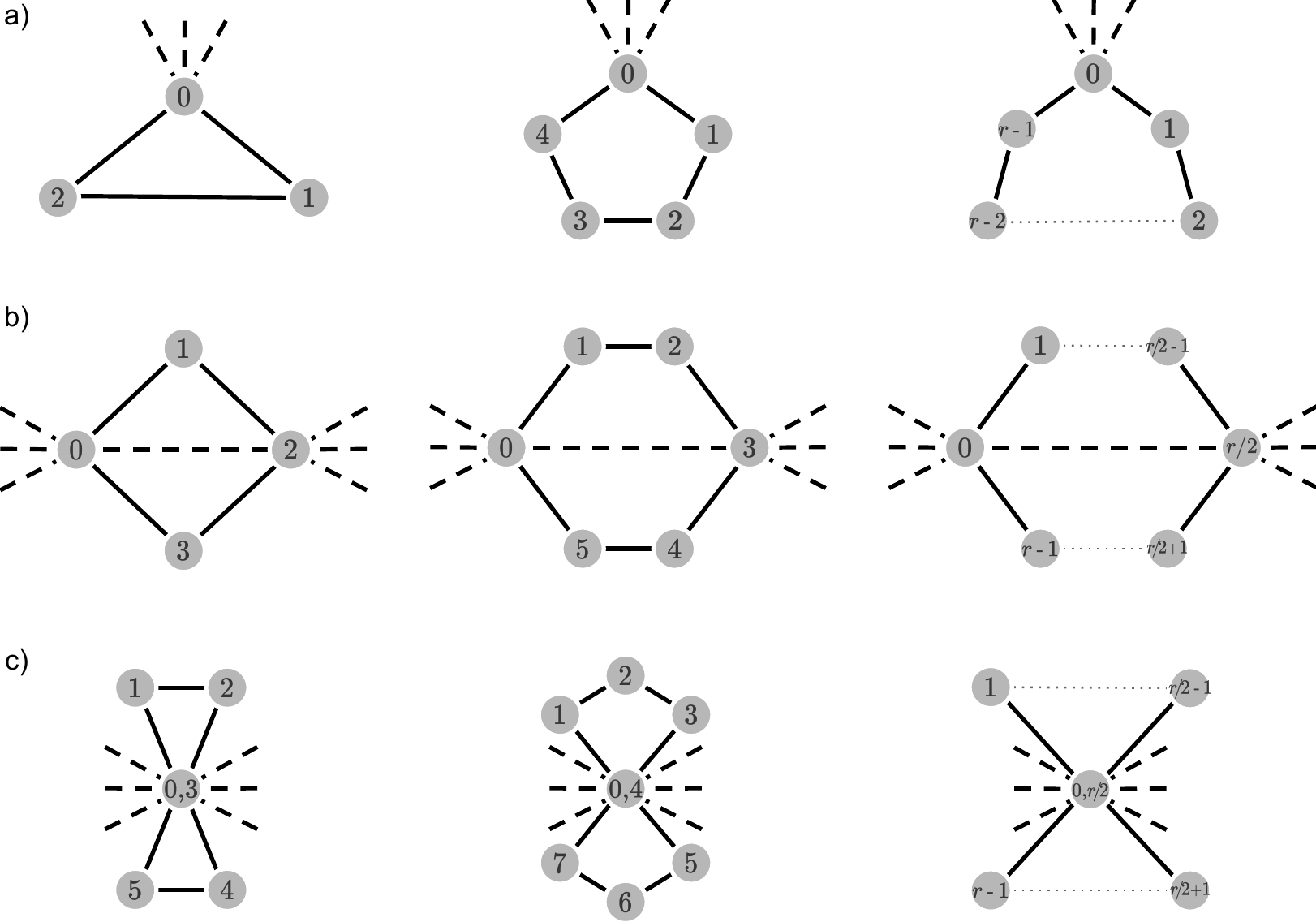}
\par\end{centering}
\caption{\label{fig:collar-pendant}Motifs associated to unit eigenvalues.
Dashed lines denote possible edges. Dotted lines denote missing nodes
all of which have degree $2$. \textbf{a)} Pendants of size $3,5,r$.
Node $0$ is the only one that may have degree larger than $2$. \textbf{b)}
Collars of size $4,6,r$. The nodes $0$ and $r/2$ may have arbitrary
degrees, and they may even be neighbors of each other. \textbf{c)}
Bracelets of size $6,8,r$. A bracelet can be considered as a degenerate
case of a collar where nodes $0,r/2$ have been identified.}
\end{figure}

\begin{figure}
\begin{centering}
\includegraphics{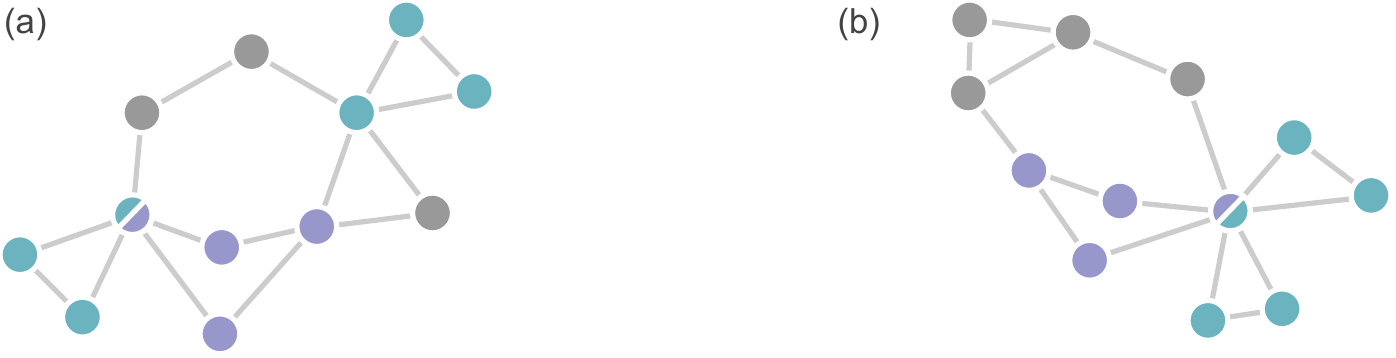}
\par\end{centering}
\caption{\label{fig:crab-squid}
\errata{Errata: Since the original upload of this manuscript, we have found that the graph in (a) has unit eigenvalues that are not explained by our results here. In particular, it has sixth roots of unity without having a bracelet or collar of size $6$. Future versions of this manuscript will deal with this edge case.} Two cospectral graphs, the "crab" \textbf{(a)} and the "squid" \textbf{(b)}; see \cite{durfee2015distinguishing}. They each have two pendants of size $3$ (green nodes) as well as one collar of size $4$ (purple nodes).
}
\end{figure}

In the case of outer eigenvalues, we formulate a conjecture about
the conditions under which they all have multiplicity one. Table \ref{tab:results}
shows the algebraic multiplicity $AM(\lambda)$ and geometric multiplicity
$GM(\lambda)$ of each eigenvalue $\lambda$ in the case of graphs
with at least two cycles.\footnote{Contrast to Table $6.1$ in \cite{Grindrod2018} which deals with
the multiplicities of eigenvalues of a closely related matrix, the
so-called \emph{deformed graph Laplacian}.} All together, our results show that the only eigenvalue $\lambda$
for which $AM(\lambda)$ may not coincide with $GM(\lambda)$ is $\lambda=0$.
Under mild assumptions relating to it, the NB-matrix is diagonalizable. 

Finally, by way of application, we establish a form of eigenvalue
interlacing for the unique real NB-eigenvalue of maximum modulus,
a.k.a the Perron eigenvalue of the NB-matrix. This is done by using
the diagonalizability of the NB-matrix to diagonalize its \emph{resolvent}.
Then, we use standard tools over this resolvent, such as the Perron-Frobenius
theorem and Gershgorin's disk theorem, to prove that the Perron eigenvalue
can only increase when a new node is added to the graph.

We start by reviewing some preliminary facts in Section \ref{sec:preliminaries}.
We being our discussion by fully characterizing the NB-spectrum of
trees in Section \ref{sec:trees}. In Section \ref{sec:1-shell} we
discuss how the tree-like parts of arbitrary graphs have no influence
in the non-zero part of the spectrum and therefore from then on we
focus on graphs with minimum degree at least $2$, that is, graphs
with no tree-like parts. In Section \ref{sec:1-cycle} we characterize
the full spectrum of cycle graphs. In Section \ref{sec:2-cycles}
we discuss the inner, unit, outer, and leading eigenvalues of graphs
with two or more cycles. We review known results for inner and leading
eigenvalues in Sections \ref{sec:inner} and \ref{sec:leading}, respectively,
while our main contributions for unit and outer eigenvalues are found
in Sections \ref{sec:unit} and \ref{sec:outer}, respectively. Finally,
in Section \ref{sec:application} we use this knowledge to study the
Perron eigenvalue after adding a new node to the graph.

\begin{table}
\begin{centering}
\begin{tabular}{c|c|c|c}
Category & Sub-category & GM($\lambda)$ (AM($\lambda$); if different) & Section\tabularnewline
\hline 
Inner & $\lambda=0$ & $n_{1}$ ($2s_{1}$) & \ref{sec:1-shell}\tabularnewline
 & $0<|\lambda|<1$ & impossible & \ref{sec:inner}\tabularnewline
\hline 
Unit & $\lambda^{r}=1$, even $r$ & number of ``collars'' or ``bracelets'' & \ref{sec:unit}\tabularnewline
 & $\lambda^{r}=1$, odd $r$ & number of ``pendants'' & \ref{sec:unit}\tabularnewline
 & $\lambda^{r}\neq1,\forall r\in\mathbb{Z}$ & impossible & \ref{sec:unit}\tabularnewline
 & $\lambda=1$ & $m-n+1$ & \ref{sec:real-roots}\tabularnewline
 & $\lambda=-1$ & $m-n$ & \ref{sec:real-roots}\tabularnewline
\hline 
Outer & $1<|\lambda|<\rho$ & $1$ (\emph{conjecture}) & \ref{sec:outer}\tabularnewline
\hline 
Leading & $|\lambda|=\rho$ & $1$ & \ref{sec:leading}\tabularnewline
\end{tabular}
\par\end{centering}
\caption{\label{tab:results}Geometric multiplicity (GM) and algebraic multiplicity
(AM), if different, of NB-eigenvalues on graphs with at least two
cycles. $n_{1}$ is the number of nodes of degree one, $s_{1}$ is
the number of nodes in the $1$-shell.}
\end{table}

\section{Preliminaries and notation\label{sec:preliminaries}}

\paragraph{Generalities}

All graphs considered are undirected, simple, connected, and contain
at least $2$ nodes. For a node $i$ in $G$, we write $d_{i}$ for
its degree, i.e. the number of neighbors in $G$. If the minimum degree
of $G$ is at least $x$ we say $G$ is ``md$x$''. If $S$ is a
set of nodes of $G$, by $G\setminus S$ we mean the subgraph induced
by all nodes except those in $S$. In Appendix \ref{app:eigen} we
recall standard nomenclature relating to eigenvalues and eigenvectors.
We will also make use of the two following concepts: the \emph{$2$-core
of $G$ }is the maximal induced subgraph of $G$ in which each node
has degree at least $2$, whereas the \emph{$1$-shell of $G$ }is
the graph induced by all those nodes outside the $2$-core. The $1$-shell
is always a forest, and we sometimes refer to it as \emph{the tree-like
parts of $G$}. The nodes in the $1$-shell can be further broken
up into \emph{layers}: the nodes of degree $1$ make up the first
layer, while their neighbors make up the second layer. In general,
the neighbors of the nodes in the $r^{th}$ layer that are in the
$1$-shell but not in any other layer $s$ for $s<r$ make up the
$(r+1)^{th}$ layer. We will usually refer to the nodes in the $1$-shell
as $S$, and to the $2$-core of $G$ as $G\setminus S$. In Figure
\ref{fig:2-core} and Appendix \ref{app:kcore} we expand upon these
definitions and other relevant concepts.

\begin{figure}
\includegraphics[width=1\textwidth]{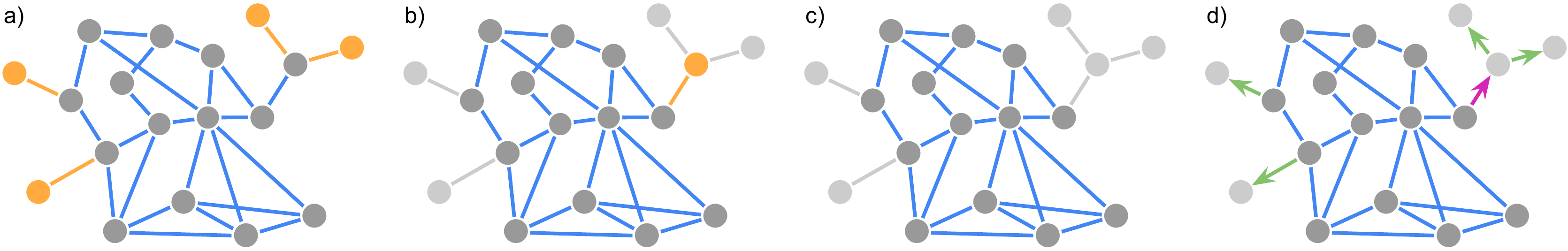}

\caption{\label{fig:2-core}\textbf{a)} The first layer of the $1$-shell of
a graph is highlighted in orange. \textbf{b)} The second layer of
the $1$-shell is highlighted; the first layer is grayed out. \textbf{c)}
The $1$-shell, a forest, is grayed out; what is left is the $2$-core.
\textbf{d)} The characteristic vectors of the green edges lie in the
kernel of $B$. The characteristic vector of the magenta edge lies
in the kernel of $B^{2}$.}
\end{figure}

\paragraph*{Oriented edges and NB-walks}

Let $G$ be a (undirected, unweighted, simple, connected) graph with
$n$ nodes and $m$ edges. Let $E$ be the set of undirected edges
of $G$: if nodes $u$ and $v$ are joined by an edge, we write $u-v$.
Let $\bar{E}$ be the set of \emph{oriented edges} of $G$ and write
$i\to j\in\bar{E}$ for the oriented edge from node $i$ to node $j$.
We say that $i$ is the \emph{source }and $j$ is the \emph{target}
of $i\to j$. Unless specified otherwise, all vectors in this work
are indexed by $\bar{E}$, and we write $\mathbf{v}_{i\to j}$ for
the value of the vector $\mathbf{v}$ at the oriented edge $i\to j$.
We write $\chi^{i\to j}$ for the characteristic vector of $i\to j$,
that is $\chi_{i\to j}^{i\to j}=1$, while $\chi_{e}^{i\to j}=0$
for any oriented edge $e$ different than $i\to j$. 

A \emph{walk} is a sequence of pairwise incident oriented edges, $u_{1}\to v_{1},u_{2}\to v_{2},\ldots,u_{r}\to v_{r}$,
where $v_{s}=u_{s+1}$ for $s=1,\ldots,r-1$. Here, $r$ is the \emph{length
}of the walk. A walk is \emph{closed} if $v_{r}=u_{1}$. A walk is
said to \emph{extend} another walk when the source node of the first
edge of the former walk is the target of the last edge of the latter
walk. The walk $u\to v,v\to u$ is called a \emph{backtrack}, i.e.
if it traces the same edge in different directions one after the other.
A walk of arbitrary length is called a \emph{non-backtracking walk}
if it does not contain backtracks. A closed walk is called a \emph{non-backtracking
cycle} if it is a closed non-backtracking walk and, additionally,
its first and last edges are not a backtrack. Note that both NB-walks
and NB-cycles may be self-intersecting. By abuse of notation, we also
use \emph{cycle} to refer to a set of nodes whose induced subgraph
is a cycle graph (a.k.a. circle graph).

The \emph{NB-matrix} of $G$ is a $2m\times2m$ matrix indexed in
the rows and columns by $\bar{E}$. It is defined as 
\begin{equation}
B_{k\to l,i\to j}\coloneqq\delta_{jk}\left(1-\delta_{il}\right).\label{eqn:nbm}
\end{equation}
 $B$ can be understood as the (unnormalized) transition matrix of
a random walker that does not trace backtracks. That is, $B_{k\to l,i\to j}$
is equal to $1$ whenever $k\to l$ extends $i\to j$ without forming
a backtrack. The action of $B$ on a vector $\mathbf{v}$ represents
the aggregation of all incoming edges, except for the backtrack (see
Figure \ref{fig:nbm-doodle}):

\begin{equation}
\left(B\mathbf{v}\right)_{k\to l}=\sum_{i}a_{ik}\mathbf{v}_{i\to k}-\mathbf{v}_{l\to k}.\label{eqn:nbm-action}
\end{equation}
Similarly, The powers of $B$ count the number of NB-walks: $B_{k\to l,i\to j}^{p}$
is equal to the number of NB-walks that start with $i\to j$ and end
with $k\to l$ with length $p+1$; see Figure \ref{fig:nbm-doodle}(c).

\begin{figure}
\centering{}\includegraphics[width=0.9\textwidth]{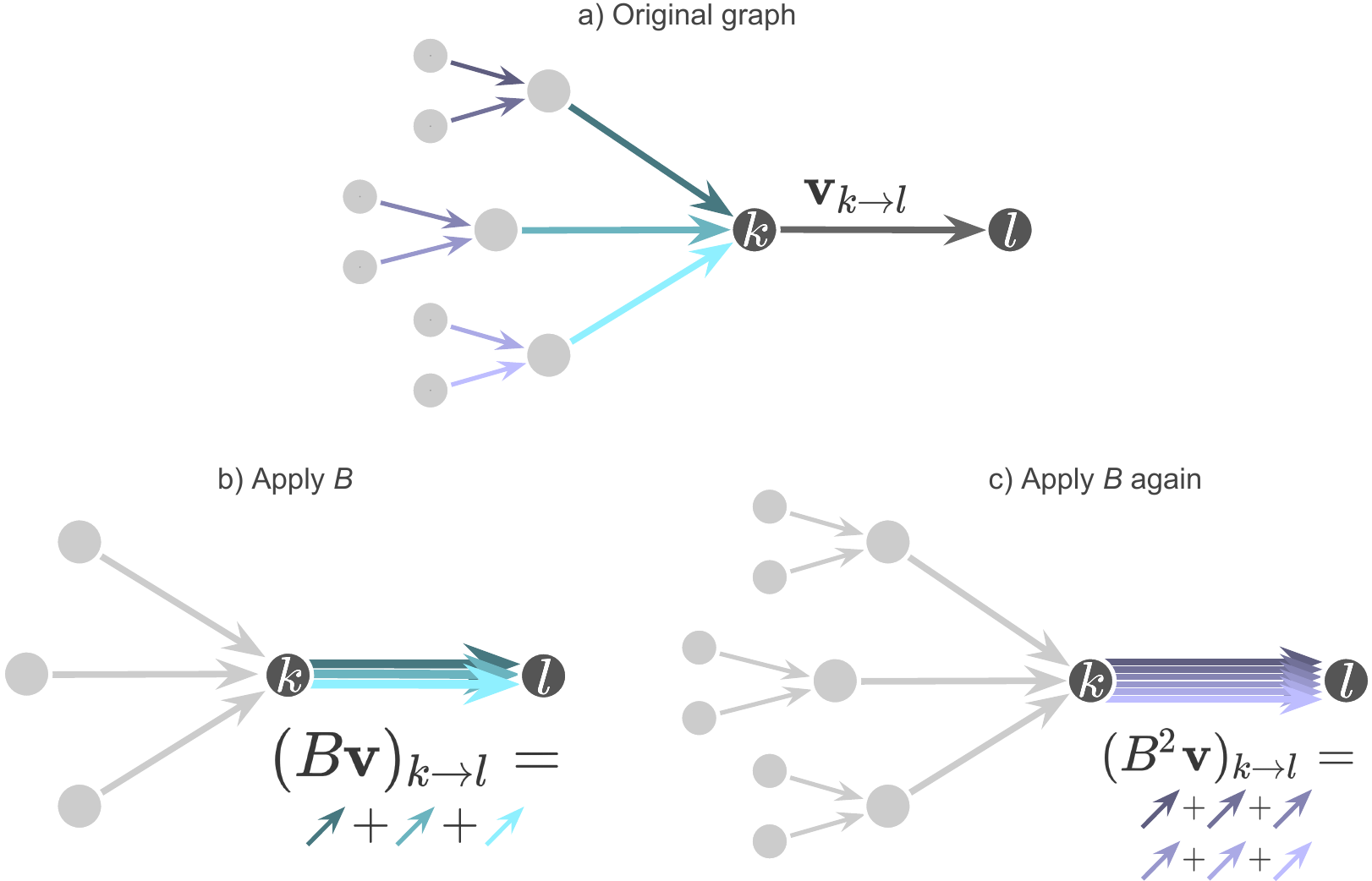}\caption{\label{fig:nbm-doodle}\textbf{Top:} $B\mathbf{v}$ aggregates the
values along all incoming edges, except for the backtrack, i.e., except
for $\mathbf{v}_{l\to k}$. \textbf{Bottom:} $B^{2}\mathbf{v}$ aggregates
the values along all NB-paths of length $3$.}
\end{figure}

\paragraph*{NB-eigenvalues}

$B$ is not symmetric and thus its eigenvalues are in general complex
numbers. Further, $B$ is not normal and thus it cannot be diagonalized
by a unitary matrix. The famous Ihara-Bass determinant formula \cite{kempton2016non,stark1996zeta}
says that if $A$ is the adjacency matrix of $G$ and $D$ is the
diagonal degree matrix, then 
\begin{equation}
\det(I\text{\textminus}tB)=\left(1\text{\textminus}t^{2}\right){}^{m-n}\det\left(I\text{\textminus}tA+t^{2}(D\text{\textminus}I)\right).\label{eqn:ihara-bass}
\end{equation}
Note that the algebraic multiplicity (AM) of a complex number $\lambda$
as an eigenvalue of $B$ equals the multiplicity of $1/\lambda$ as
a root of $\det\left(I-tB\right)$.

Let $\rho$ be the spectral radius of $B$ and recall $\lambda$ is
a leading eigenvalue of $B$ if $|\lambda|=\rho$. Perron-Frobenius
theory determines conditions under which there is one leading eigenvalue
that is positive and real. We call this the \emph{Perron eigenvalue
}of $B$.

Lastly, suppose $B\mathbf{v}=\lambda\mathbf{v}$, and let $k$ and
$l$ be any pair of neighbors in $G$. From (\ref{eqn:nbm-action})
we get

\begin{equation}
\lambda\mathbf{v}_{k\to l}+\mathbf{v}_{l\to k}=\sum_{i}a_{ik}\mathbf{v}_{i\to k}.\label{eqn:eigenvector-in-two-directions}
\end{equation}
When $\lambda$ is the Perron eigenvalue and $\mathbf{v}$ the corresponding
right eigenvector, the right-hand side is called the \emph{NB-centrality}
of $k$ \cite{martin2014localization}, denoted here by $\nbcent$
, 
\begin{equation}
\nbcent\coloneqq\sum_{i}a_{ik}\mathbf{v}_{i\to k}.\label{eqn:nb-centrality}
\end{equation}

\section{Graphs with zero or one cycles\label{sec:zero-one}}

In this Section we provide a complete description of the NB-eigenvalues
and NB-eigenvectors of trees. Then, we show that the $1$-shell of
an arbitrary graph does not influence the non-zero NB-eigenvalues
because the $1$-shell is always a forest, and hence its contribution
to the NB-spectrum can be reduced to the tree case. For this reason,
after this section we will always assume that a graph is md$2$ or,
equivalently, has empty $1$-shell. We also provide a complete description
of the spectrum of graphs with exactly $1$ cycle and empty $1$-shell,
i.e. cycle graphs. The unit NB-eigenvalues of graphs with two or more
cycles are tightly related to the eigenvalues of cycle graphs.

\subsection{Trees\label{sec:trees}}

If $G$ is a tree, as soon as a NB-walk reaches a node of degree one,
it cannot be extended without backtracking. This immediately leads
us to our first result. 
\begin{prop}
\label{pro:tree-eigenvalues}If $G$ is a tree. $B$ is not diagonalizable.
\end{prop}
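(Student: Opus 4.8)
The plan is to show that $B$ is nilpotent but not the zero matrix, and then to invoke the elementary fact that a nonzero nilpotent matrix can never be diagonalized: if $B$ were both nilpotent and diagonalizable, all of its eigenvalues would vanish and hence $B = P\cdot 0\cdot P^{-1} = 0$. So it suffices to establish nilpotency together with $B\neq 0$.

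Nilpotency will come directly from the combinatorial meaning of the powers of $B$. Recall that $B^{p}_{k\to l,\,i\to j}$ counts the NB-walks of length $p+1$ that begin with $i\to j$ and end with $k\to l$. First I would argue that in a tree every NB-walk is a simple path. Indeed, if a NB-walk visited some node twice, pick the first repetition $v_{t}=v_{s}$ with $s<t$ minimal, so that $v_{s},\dots,v_{t-1}$ are distinct. The case $t-s=2$ is exactly a backtrack $v_{s}\to v_{s+1}\to v_{s}$ (excluded by definition), the case $t-s=1$ would be a loop (impossible in a simple graph), and $t-s\ge 3$ produces a cycle on the distinct nodes $v_{s},\dots,v_{t-1}$, contradicting that $G$ is a tree. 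Hence every NB-walk is a simple path, and since a simple path between two nodes of a tree is the unique path joining them, its length is at most the diameter $d$ of $G$. Consequently there are no NB-walks of length exceeding $d$, so $B^{d}=0$ and $B$ is nilpotent.

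It then remains to note that $B\neq 0$. Since $B_{k\to l,\,i\to j}$ is nonzero precisely when $i\to j,\,k\to l$ is a NB-walk of length two, we have $B\neq 0$ as soon as $G$ contains a path on three nodes, which holds for every tree possessing a node of degree $\ge 2$. The sole exception is the single-edge tree, for which $B=0$ is trivially diagonalizable; I would exclude this degenerate case or, equivalently, assume $G$ has at least three nodes. With $B$ exhibited as a nonzero nilpotent matrix, diagonalizability is impossible, which completes the argument.

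I expect no serious obstacle here: the only points needing care are the short verification that a NB-walk in a tree cannot revisit a node (so that walk lengths, and hence the nilpotency index, are bounded by the diameter) and the bookkeeping of the trivial single-edge case. An alternative but less self-contained route would feed the Ihara--Bass identity \eqref{eqn:ihara-bass} back through $m=n-1$ to show $\det(I-tB)\equiv 1$, again forcing every NB-eigenvalue to be $0$; however, the direct walk-counting argument is cleaner and sidesteps the awkward negative exponent $(1-t^{2})^{m-n}$ that arises for trees.
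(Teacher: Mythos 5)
Your proof takes essentially the same route as the paper: show $B$ is nilpotent (an NB-walk in a tree can never revisit a node), conclude all eigenvalues vanish, and invoke the fact that a nonzero nilpotent matrix is never diagonalizable; your bound $B^{d}=0$ via the diameter is only a slight sharpening of the paper's $B^{n}=0$. One point in your favor: you correctly flag the single-edge tree, for which $B=0$ is diagonalizable, whereas the paper's closing claim that $B=0$ ``is impossible since $G$ is connected'' actually fails for the two-node path (allowed by the paper's standing assumption of at least $2$ nodes), since a nonzero entry of $B$ requires a path on three nodes rather than merely an edge --- so your explicit exclusion of that degenerate case repairs a small oversight in the paper's own argument.
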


\begin{proof}
Let $n$ be the number of nodes of $G$. A walk of length $n+1$ must
visit at least one node more than once. However, a NB-walk in a tree
cannot visit any node more than once since there are neither cycles
nor backtracks. Therefore there are no NB-walks of length $n+1$ and
$\mbox{\ensuremath{B^{n}=0}}$. This means that $B$ is nilpotent
or, equivalently, that all of its eigenvalues are zero. Lastly, a
nilpotent matrix is diagonalizable only when it equals the zero matrix,
which is impossible since $G$ is connected.
\end{proof}
Now, the kernels of $B,B^{2},B^{3},\ldots$, track the composition
of the $1$-shell of $G$ in its successive layers. See Figure \ref{fig:2-core}(d)
for an example.
\begin{prop}
\label{pro:tree-eigenvectors}Let $i\to j$ be in the $\ell^{th}$
layer of the $1$-shell of $G$. Then, $B^{\ell}\chi^{i\to j}=0$.
\end{prop}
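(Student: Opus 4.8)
The plan is to translate the claim into a statement about lengths of non-backtracking walks and then induct on the layer index $\ell$. Recall from the discussion of the powers of $B$ that $B^{p}_{k\to l,\,i\to j}$ counts the NB-walks of length $p+1$ that begin with $i\to j$ and end with $k\to l$. Consequently $(B^{\ell}\chi^{i\to j})_{k\to l}=B^{\ell}_{k\to l,\,i\to j}$, so $B^{\ell}\chi^{i\to j}=0$ is equivalent to the assertion that \emph{no} NB-walk of length $\ell+1$ starts with the edge $i\to j$. Thus the lemma reduces to showing that every NB-walk issued from an edge $i\to j$ in the $\ell$-th layer dies out (cannot be extended) after at most $\ell$ steps. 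I would also record the elementary identity $B\chi^{i\to j}=\sum_{l\sim j,\,l\neq i}\chi^{j\to l}$, which reads off directly from the definition (\ref{eqn:nbm}) and will drive the induction.

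First I would isolate the structural fact that forces termination: the $1$-shell is a forest attached to the $2$-core, and the edge $i\to j$ is oriented \emph{into} this forest, away from the $2$-core. A NB-walk beginning with such an edge can never reach the $2$-core, since the relevant component of the $1$-shell is a tree and the only route back toward the $2$-core is to retrace the unique $i$--$j$ tree-path, which would force either an immediate backtrack or a repeated node — neither of which a NB-walk in a tree permits. Hence the walk is confined to the finite subtree hanging below $j$ and must terminate at a leaf, exactly as in the tree case of Proposition \ref{pro:tree-eigenvalues}.

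With this in hand I would argue by induction on $\ell$. For the base case $\ell=1$, an edge in the first layer points at a degree-one node $j$, so there is no neighbor $l\sim j$ with $l\neq i$ and $B\chi^{i\to j}=0$ outright. For the inductive step I would expand $B^{\ell}\chi^{i\to j}=\sum_{l\sim j,\,l\neq i}B^{\ell-1}\chi^{j\to l}$ and show that each continuation edge $j\to l$ lies in a layer at most $\ell-1$; the inductive hypothesis then annihilates every summand. The content of the step is that stepping one edge further outward into the forest strictly decreases the layer index, so after $\ell$ applications of $B$ the walk has been pushed all the way out to a leaf and can proceed no further.

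The delicate point — and the step I expect to be the main obstacle — is pinning down the precise relationship between the layer index attached to the oriented edge $i\to j$ and the maximal length of an NB-walk it can spawn (equivalently, the height of the subtree hanging below $j$). One must be careful that it is the \emph{outward} orientation that matters, and that the layer index is read so that it coincides with this maximal remaining walk length rather than, say, the distance to the nearest leaf; nodes sitting at the root of branches of unequal depth are exactly where a careless reading breaks down, and two edges with the same pair of endpoint-layers can behave differently depending on which endpoint is the parent in the rooted forest. Once the monotone ``one step out lowers the layer by one'' statement is made rigorous for the intended convention, the induction closes and $B^{\ell}\chi^{i\to j}=0$ follows.
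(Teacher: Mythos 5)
Your proof takes essentially the same route as the paper's: induction on the layer index $\ell$, with base case $B\chi^{i\to j}=0$ for edges pointing at degree-one nodes, and inductive step expanding $B\chi^{i\to j}=\sum_{l\sim j,\,l\neq i}\chi^{j\to l}$ so that every continuation edge lies in a lower layer and is annihilated by $B^{\ell-1}$. If anything, yours is slightly more careful: the paper asserts the continuations sit in layer exactly $\ell-1$, whereas they may sit in any layer $\leq\ell-1$ (requiring the strong induction hypothesis you implicitly invoke), and your closing remarks correctly pin down the outward-orientation convention under which the statement is to be read.
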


\begin{proof}
Let $i\to j$ be in the $1^{st}$ layer of the $1$-shell. Equation
(\ref{eqn:nbm}) implies $B\chi^{i\to j}=0$. By induction, suppose
the theorem is true for $\ell\!-\!1$, and let $i\to j$ be in the
$\ell^{th}$ layer. Using (\ref{eqn:nbm}) again we have 
\begin{equation}
B\chi^{i\to j}=\sum_{k\neq i}\chi^{j\to k}.
\end{equation}
However, each $\chi^{j\to k}$ is now in the $\left(\ell-1\right)^{th}$
layer and thus in the kernel of $B^{\ell-1}$.
\end{proof}
Note that since $G$ is a tree, it is equal to its $1$-shell and
thus the last two Propositions complete the characterization of the
eigenvalues and eigenvectors of any tree. However, Proposition \ref{pro:tree-eigenvectors}
applies to any $G$, not just trees. This is the fundamental fact
that we use next.

\subsection{The $1$-shell of arbitrary graphs\label{sec:1-shell}}

Suppose that $G$ has non-empty $2$-core (i.e. it is not a tree)
and non-empty $1$-shell (i.e. it has at least one node of degree
one). Let $i$ have degree $1$ and let $j$ be its neighbor. Then,
$B$ can be written as 

\begin{align}\label{eqn:nbm-block}
B =
\left(
\begin{BMAT}(e)[0pt,0cm,0cm]{ccc.c.c}{ccc.c.c}
 &              & &            &   \\
 & B'           & & \mathbf{0} & D \\
 &              & &            &   \\
 & E^T          & & 0          & 0 \\
 & \mathbf{0}^T & & 0          & 0 \\
\end{BMAT}
\right),
\end{align}where $D$, $E$, and $\boldsymbol{0}$ are column vectors, and $B'$
is the NB-matrix of $G\setminus\{i\}$. Using the theory of Schur
complements (see e.g \cite{horn2012matrix} Equation 0.8.5.1), we
have

\begin{align}\label{eqn:schur-one-node}
\det \left( B - tI \right) = 
\left|
\begin{BMAT}(r)[3pt,0cm,0cm]{ccc.c.c}{ccc.c.c}
 &             & &            &    \\
 & B' - tI     & & \mathbf{0} & D  \\
 &             & &            &    \\
 & E^T         & & -t         & 0  \\
 & \mathbf{0}^T & & 0         & -t \\
\end{BMAT}
\right| =
t^2 \det \left( B' - tI + \frac{1}{t} \left(
\begin{BMAT}(e){c.c}{ccc} & \\ \mathbf{0} & D \\ & \end{BMAT} \right)
\left(
\begin{BMAT}(e){ccc}{c.c} & E^{T} & \\ & \mathbf{0}^T & \end{BMAT} \right)
\right) =
t^2 \det \left( B' - tI \right).
\end{align}In other words, the spectrum of $B$ is exactly that of $B'$ plus
two additional zeros. Now assume $B\mathbf{v}=\lambda\mathbf{v}$
for non-zero $\lambda$ and write $ \mathbf{v} = \left( \begin{BMAT}(@){ccc.c.c}{c} \quad & \mathbf{v}' & \quad & \mathbf{v}_{j \to i} & \mathbf{v}_{i \to j} \end{BMAT} \right)^T $
, so it has the same block-structure as in (\ref{eqn:nbm-block}).
In this case we have 

\begin{align}
\left(
\begin{BMAT}(e)[0pt,0cm,0cm]{ccc.c.c}{ccc.c.c}
 &              & &            &   \\
 & B'           & & \mathbf{0} & D \\
 &              & &            &   \\
 & E^T          & & 0          & 0 \\
 & \mathbf{0}^T & & 0          & 0 \\
\end{BMAT}
\right)
\left(
\begin{BMAT}(r){c}{ccc.c.c} \quad \\ \mathbf{v}' \\ \quad \\ \mathbf{v}_{j \to i} \\ \mathbf{v}_{i \to j} \end{BMAT}
\right) =
\left(
\begin{BMAT}(r){c}{ccc.c.c} \quad \\ B' \mathbf{v}' + \mathbf{v}_{i\to j} D \\ \quad \\ E^{T} \mathbf{v}' \\ 0 \end{BMAT}
\right) = 
\lambda \left(
\begin{BMAT}(r){c}{ccc.c.c} \quad \\ \mathbf{v}' \\ \quad \\ \mathbf{v}_{j \to i} \\ \mathbf{v}_{i \to j} \end{BMAT}
\right),
\end{align}which immediately reduces to

\begin{equation}
\begin{cases}
B'\mathbf{v}' & =\lambda\mathbf{v}'\\
\mathbf{v}_{i\to j} & =0\\
\lambda\mathbf{v}_{j\to i} & =E^{T}\mathbf{v}'
\end{cases}\label{eqn:eigenvector-extension}
\end{equation}
Therefore, if we can find an eigenvector $\mathbf{v}'$ of $B'$,
we can use it to find an eigenvector $\mathbf{v}$ of $B$. Iterating
the above arguments over each node of the $1$-shell yields the following
result.
\begin{prop}
\label{pro:reduction-to-2-core}Let $S$ be the set of nodes in the
$1$-shell of $G$. The non-zero NB-eigenvalues are determined solely
by $G\setminus S$, and all eigenvectors can be computed starting
from an eigenvector of $G\setminus S$.
\end{prop}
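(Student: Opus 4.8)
The plan is to prove both halves of the statement by iterating the single-node reduction already carried out in equations (\ref{eqn:schur-one-node}) and (\ref{eqn:eigenvector-extension}), applying it once for each node of the $1$-shell. The single deletion is the entire content of the argument; the only genuinely new ingredient is organizing the deletions into a valid order, after which everything follows by induction with no further computation.

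First I would fix an enumeration $i_1, i_2, \ldots, i_s$ of the nodes of $S$ such that each $i_t$ has degree $1$ in the graph $G_t := G \setminus \{i_1, \ldots, i_{t-1}\}$, where $G_1 = G$. Such a peeling order exists by the very definition of the $1$-shell: the layer decomposition of Section \ref{sec:preliminaries} (see also Appendix \ref{app:kcore}) shows that the first layer consists of degree-$1$ nodes, and deleting an entire layer promotes the next layer to degree $1$; since $G$ is connected with non-empty $2$-core, no two degree-$1$ nodes are adjacent, so deletions within a layer do not interfere. I would then record the structural invariant that the $2$-core of every $G_t$ is still $G \setminus S$ and that the remaining $1$-shell of $G_t$ is $\{i_t, \ldots, i_s\}$. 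This guarantees that at each stage the hypotheses of the single-node argument are met exactly: $i_t$ is a degree-$1$ node of $G_t$ whose deletion produces $G_{t+1}$, and the NB-matrix of $G_{t+1}$ plays the role of $B'$ in the block form (\ref{eqn:nbm-block}).

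For the eigenvalue claim, the $t$-th application of (\ref{eqn:schur-one-node}) gives $\det(B_{G_t} - tI) = t^2 \det(B_{G_{t+1}} - tI)$. Multiplying these $s$ identities telescopes to $\det(B_G - tI) = t^{2s}\det(B_{G\setminus S} - tI)$, so the two characteristic polynomials agree up to the factor $t^{2s}$; hence the non-zero eigenvalues of $B_G$ and of $B_{G\setminus S}$ coincide with identical algebraic multiplicities, and only the multiplicity of $\lambda = 0$ is altered. For the eigenvector claim, the $t$-th application of (\ref{eqn:eigenvector-extension}) shows that, for $\lambda \neq 0$, restricting a $\lambda$-eigenvector $\mathbf{v}$ of $B_{G_t}$ to $\mathbf{v}'$ is a linear isomorphism onto the $\lambda$-eigenvectors of $B_{G_{t+1}}$, whose inverse is the explicit extension $\mathbf{v}_{i_t \to j} = 0$ and $\mathbf{v}_{j \to i_t} = \lambda^{-1} E^T \mathbf{v}'$. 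Composing these isomorphisms over all $s$ steps exhibits every $\lambda$-eigenvector of $B_G$ as an explicit extension of a $\lambda$-eigenvector of the $2$-core's NB-matrix.

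I expect the main obstacle to be organizational rather than computational: establishing that the peeling order is well-defined and, crucially, that the invariant ``$2$-core of $G_t$ equals $G \setminus S$'' holds at every intermediate stage, so that the block decomposition (\ref{eqn:nbm-block}) legitimately applies at each step with $B'$ equal to the NB-matrix of $G_{t+1}$. Once this invariant is secured, both conclusions drop out of the two single-node identities by a routine induction.
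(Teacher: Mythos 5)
Your proposal is correct and takes essentially the same route as the paper: the paper's proof likewise iterates the single-node reduction of (\ref{eqn:schur-one-node}) and (\ref{eqn:eigenvector-extension}) over the $1$-shell, removing nodes layer by layer so that each deleted node has degree one at the moment of deletion, and then invokes induction. Your write-up simply makes explicit what the paper leaves implicit --- the peeling order, the invariant that the $2$-core is preserved at each stage, the telescoped determinant identity, and the eigenspace isomorphism --- so it is a faithful, slightly more detailed version of the same argument.
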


\begin{proof}
Let $i$ be a node of degree one of $G$. The arguments in this section
show that the non-zero eigenvalues depend only on $G\setminus\{i\}$,
and that the eigenvectors can be computed using (\ref{eqn:eigenvector-extension}).
Now let $S_{1},S_{2}$ be the set nodes in the $1^{st}$ and $2^{nd}$
layers of the $1$-shell, respectively. Apply the above argument to
each node in $S_{1}$ in turn to show that the eigenvalues are solely
determined by $G\setminus S_{1}$, and the same can be said for the
eigenvectors. But now the nodes in $S_{2}$ have degree one in $G\setminus S_{1}$.
An inductive argument finishes the proof.
\end{proof}
We will have much more to say about the non-zero eigenvalues and their
eigenvectors. However, Proposition \ref{pro:reduction-to-2-core}
establishes that, in order to do so, it is enough to focus on the
$2$-core of a graph. For now, we fully characterize the zero eigenvalue
and kernel of arbitrary graphs.
\begin{prop}
\label{pro:zero}Let $G$ be an arbitrary graph and let $S$ be the
set of nodes in its $1$-shell, with $s_{1}=|S|$, and let $n_{1}$
be the number of nodes of degree one. We have $AM(0)=2s_{1}$, and
$GM(0)=n_{1}$. 
\end{prop}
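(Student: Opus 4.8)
The plan is to compute the two multiplicities by different means: the geometric multiplicity by solving $B\mathbf{v}=0$ directly, and the algebraic multiplicity by iterating the Schur-complement identity (\ref{eqn:schur-one-node}). Since the kernel computation also settles the base case needed for the algebraic count, I would carry it out first. Throughout I take $G$ to have a nonempty $2$-core, which is the standing assumption of this subsection (the tree case being handled separately by Proposition \ref{pro:tree-eigenvalues}); note that $GM(0)=n_1$ will in fact hold for every graph, whereas the $AM(0)=2s_1$ formula genuinely needs the $2$-core to be nonempty.

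For the geometric multiplicity, I would start from the explicit action (\ref{eqn:nbm-action}). Imposing $(B\mathbf{v})_{k\to l}=0$ gives $\mathbf{v}_{l\to k}=\sum_i a_{ik}\mathbf{v}_{i\to k}$ for every oriented edge $k\to l$. Crucially the right-hand side depends only on the target $k$, so writing $\sigma_k:=\sum_i a_{ik}\mathbf{v}_{i\to k}$ I get $\mathbf{v}_{l\to k}=\sigma_k$ for every neighbor $l$ of $k$; summing over the $d_k$ neighbors of $k$ and comparing with the definition of $\sigma_k$ forces $(d_k-1)\sigma_k=0$. Hence $\sigma_k=0$ whenever $d_k\geq 2$, so a kernel vector must vanish on every oriented edge whose target has degree at least $2$, while on the unique oriented edge pointing into a degree-one node its value is unconstrained. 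A one-line check confirms that any vector supported on these edges genuinely lies in $\ker B$. As each degree-one node contributes exactly one such incoming edge, $GM(0)=\dim\ker B=n_1$.

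For the algebraic multiplicity I would peel the $1$-shell one node at a time, exactly as in the proof of Proposition \ref{pro:reduction-to-2-core}. By (\ref{eqn:schur-one-node}) each removal of a degree-one node multiplies the characteristic polynomial by $t^2$; leaf-stripping the forest $S$ removes all $s_1$ of its nodes (each of degree one at the instant it is removed) and leaves the $2$-core, so $\det(B-tI)=t^{2s_1}\det(B_{\mathrm{core}}-tI)$, where $B_{\mathrm{core}}$ is the NB-matrix of $G\setminus S$. It remains to check that $B_{\mathrm{core}}$ contributes no further factor of $t$, i.e. that $0$ is not one of its eigenvalues. But $G\setminus S$ is md$2$, so the kernel computation above — with no degree-one targets to exploit — gives $\ker B_{\mathrm{core}}=\{0\}$; thus $\det(B_{\mathrm{core}})\neq 0$, the power of $t$ dividing $\det(B-tI)$ is exactly $2s_1$, and $AM(0)=2s_1$.

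The only delicate point is the seam between the two halves: the algebraic count is clean precisely because the $2$-core's NB-matrix is genuinely invertible, and I expect the main obstacle to be stating this invertibility cleanly — it is nothing but the $n_1=0$ specialization of the kernel computation, so I would phrase that computation for an arbitrary graph and then invoke it on $G\setminus S$. A minor bookkeeping point is to confirm that leaf-stripping really removes exactly $s_1$ nodes, each of degree one at removal time and never isolated beforehand, which holds because the $1$-shell is a forest hanging off the nonempty $2$-core and therefore always exposes a leaf until only the $2$-core remains.
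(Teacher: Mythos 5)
Your proposal is correct and takes essentially the same route as the paper: the geometric multiplicity comes from the same kernel computation (the paper's Equation (\ref{eqn:kernel}), which is exactly your relation $(d_k-1)\sigma_k=0$ together with $\mathbf{v}_{l\to k}=\sigma_k$), and the algebraic multiplicity from iterating the Schur-complement identity (\ref{eqn:schur-one-node}) over the $1$-shell and then invoking that same kernel computation to rule out a zero eigenvalue of the $2$-core's NB-matrix. Your explicit caveat that $AM(0)=2s_1$ requires a nonempty $2$-core (it fails for trees, where $B$ is nilpotent) is a worthwhile precision that the paper leaves implicit in the standing assumption of Section \ref{sec:1-shell}.
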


\begin{proof}
When $\lambda=0$ and $Bv=0$, Equations (\ref{eqn:eigenvector-in-two-directions})
and (\ref{eqn:nb-centrality}) together show that 
\begin{equation}
\mathbf{v}_{l\to k}\left(d_{k}-1\right)=0,\label{eqn:kernel}
\end{equation}

for any oriented edge $l\to k$, where $d_{k}$ is the degree of $k$.
Thus, $\mathbf{v}$ can only be non-zero when there exists at least
one node of degree one in the graph. This shows $GM(0)\geq n_{1}$.

Now, iterating (\ref{eqn:schur-one-node}) over each element of $S$,
in ascending order of layers, shows that $AM(0)$ is exactly equal
to $2s_{1}$ plus the algebraic multiplicity of $0$ in $G\setminus S$.
However, (\ref{eqn:kernel}) shows that $G\setminus S$ never has
$0$ as an eigenvalue since it does not have nodes of degree one.
Therefore, $AM(0)=2s_{1}$. Further, Equation (\ref{eqn:kernel})
shows that there is exactly one vector in the kernel for each node
of degree one and therefore $GM(0)=n_{1}$.
\end{proof}
\begin{cor}
$B$ is invertible if and only if the $1$-shell of $G$ is empty.
In that case, it is given by 
\[
B_{k\to l,i\to j}^{-1}=\frac{\delta_{il}}{d_{l}-1}\left(1-\delta_{kj}\left(d_{l}-1\right)\right).
\]
\end{cor}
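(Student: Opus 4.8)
The plan is to split the statement into two independent pieces: the invertibility criterion and the explicit formula. The first piece follows immediately from the preceding proposition, while the second is most cleanly handled by solving $B\mathbf{w}=\mathbf{v}$ directly rather than verifying a guessed formula.

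For the criterion, I would note that a square matrix is invertible exactly when $0$ is not an eigenvalue, i.e. when $AM(0)=0$. Proposition \ref{pro:zero} gives $AM(0)=2s_{1}$, where $s_{1}$ is the number of nodes in the $1$-shell. Since $2s_{1}=0$ if and only if $s_{1}=0$, which is to say the $1$-shell is empty, the equivalence is immediate. This same observation shows that in the invertible case $G$ is md$2$, so $d_{l}-1\geq 1>0$ for every node $l$; in particular the claimed formula is well-defined, which is the first sanity check the formula must pass.

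For the explicit inverse, I would solve $B\mathbf{w}=\mathbf{v}$ for arbitrary $\mathbf{v}$. By the action formula (\ref{eqn:nbm-action}), this reads $\sum_{i}a_{ik}\mathbf{w}_{i\to k}-\mathbf{w}_{l\to k}=\mathbf{v}_{k\to l}$ for every oriented edge $k\to l$. The key step is an aggregation trick: fix the target $k$ and sum this identity over all neighbors $l$ of $k$. Writing $w^{(k)}:=\sum_{i}a_{ik}\mathbf{w}_{i\to k}$ for the total incoming weight at $k$, the left-hand side collapses to $(d_{k}-1)\,w^{(k)}$, yielding $w^{(k)}=\frac{1}{d_{k}-1}\sum_{l}a_{lk}\mathbf{v}_{k\to l}$; this is precisely where md$2$ is needed so that $d_{k}-1\neq 0$. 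Substituting back gives the closed form $\mathbf{w}_{l\to k}=w^{(k)}-\mathbf{v}_{k\to l}$, and reading off the coefficient of $\mathbf{v}_{i\to j}$ in $\mathbf{w}_{k\to l}$ reproduces the stated entries of $B^{-1}$ after using $a_{ij}=1$ on oriented edges and $\delta_{jk}=\delta_{kj}$.

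Since this derivation solves $B\mathbf{w}=\mathbf{v}$ for every $\mathbf{v}$, the map $\mathbf{v}\mapsto\mathbf{w}$ is a well-defined linear right inverse of the square matrix $B$, hence (given invertibility from the first part) the two-sided inverse, so no separate check of $B^{-1}B=I$ is required; alternatively one can verify the matrix identity entrywise straight from (\ref{eqn:nbm}). I expect no genuine obstacle here. The only point demanding care is the bookkeeping of source and target indices when matching the closed form $\mathbf{w}_{l\to k}$ against the row/column convention $B^{-1}_{k\to l,\,i\to j}$, since the degree factor $d_{l}-1$ in the formula refers to the target of the \emph{row} edge, which corresponds in the derivation to the aggregation node $k$.
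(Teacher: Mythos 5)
Your proposal is correct, and while the first half mirrors the paper exactly, the second half takes a genuinely different route. For the invertibility criterion you do precisely what the paper does: read $AM(0)=2s_{1}$ off Proposition \ref{pro:zero} and conclude that $B$ is invertible iff $s_{1}=0$. For the explicit formula, however, the paper's entire proof is that the entries ``can be checked manually using Equation (\ref{eqn:nbm})'' --- i.e.\ a verification that the claimed matrix times $B$ is the identity --- whereas you \emph{derive} the inverse by solving $B\mathbf{w}=\mathbf{v}$: summing the relation $\sum_{i}a_{ik}\mathbf{w}_{i\to k}-\mathbf{w}_{l\to k}=\mathbf{v}_{k\to l}$ over all neighbors $l$ of $k$ to isolate the aggregate $w^{(k)}=\frac{1}{d_{k}-1}\sum_{l}a_{lk}\mathbf{v}_{k\to l}$, then back-substituting. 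Your aggregation step is the same move the paper itself uses in Lemma \ref{lem:nb-in-cent} and Proposition \ref{pro:zero}, and your route buys motivation: it shows where the formula and the factor $d_{l}-1$ come from, and makes transparent why md$2$ (equivalently, empty $1$-shell) is exactly the needed hypothesis. What the paper's verification buys is logical economy: an entrywise check of the product confirms a two-sided inverse outright, whereas your derivation by itself only establishes that any solution of $B\mathbf{w}=\mathbf{v}$ must have the stated form, so you genuinely need the invertibility from the first part (existence of solutions) to promote the derived candidate to $B^{-1}$. You handle this correctly with your parenthetical, and your index bookkeeping (the $d_{l}-1$ in the stated formula refers to the target of the row edge, your aggregation node) checks out, so there is no gap --- just be aware that your phrase ``this derivation solves $B\mathbf{w}=\mathbf{v}$'' is, strictly speaking, a necessity argument until the existence step is invoked.
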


\begin{proof}
The first statement is direct from the preceding Proposition. The
second statement can be checked manually using Equation (\ref{eqn:nbm}).
\end{proof}
This finalizes the characterization of $\lambda=0$ in the general
case. For the purpose of diagonalizability, note that $0$ is defective
unless the $1$-shell is empty. 

\subsection{Graphs with one cycle\label{sec:1-cycle}}

Starting now and in the rest of the paper, we assume $G$ is md$2$.
We now focus on graphs with one cycle whose $1$-shell is empty, i.e.
cycle graphs. Let $G$ be a cycle graph with $n$ nodes. In this case,
$B$ has the block form

\begin{equation}
B=\left(\begin{array}{cc}
B^{cw} & \boldsymbol{0}\\
\boldsymbol{0} & B^{ccw}
\end{array}\right),
\end{equation}
where $B^{cw}\,\left(B^{ccw}\right)$ is indexed by the oriented edges
going around the cycle in clockwise (resp. counter-clockwise) order,
and are therefore matrices representing cyclic permutations of order
$n$. 
\begin{prop}
Let $G$ be a cycle graph with $n$ nodes. Then, the eigenvalues of
$B$ are the $n^{\text{th}}$ roots of unity, each with (algebraic
and geometric) multiplicity $2$. $B$ is diagonalizable.
\end{prop}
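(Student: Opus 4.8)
The plan is to exploit the block-diagonal structure of $B$ already exhibited above. Since $B = \mathrm{diag}(B^{cw}, B^{ccw})$, its characteristic polynomial factors as the product $\det(tI - B^{cw})\det(tI - B^{ccw})$, and any eigenvector of either block, padded with zeros in the other coordinates, is an eigenvector of $B$. Thus it suffices to analyze a single $n \times n$ cyclic permutation matrix and then glue together the two copies.

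First I would identify $B^{cw}$ (and likewise $B^{ccw}$) as the cyclic shift matrix, i.e. the permutation matrix of an $n$-cycle. Its characteristic polynomial is $t^n - 1$, so its eigenvalues are exactly the $n^{\text{th}}$ roots of unity $\omega = e^{2\pi i k/n}$ for $k = 0, \ldots, n-1$, each with algebraic multiplicity one. The corresponding eigenvector is the Fourier/Vandermonde vector $(1, \omega, \omega^2, \ldots, \omega^{n-1})$, and since these $n$ vectors are linearly independent, each block is diagonalizable with $n$ simple eigenvalues. Alternatively, one may simply note that a permutation matrix is real orthogonal, hence normal, hence unitarily diagonalizable.

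Combining the blocks, each $n^{\text{th}}$ root of unity occurs once as an eigenvalue of $B^{cw}$ and once as an eigenvalue of $B^{ccw}$; hence it occurs with algebraic multiplicity $2$ in $B$, and these exhaust all eigenvalues. Because both diagonal blocks are diagonalizable, so is their direct sum $B$, and therefore every eigenvalue has equal algebraic and geometric multiplicity. The two independent eigenvectors attached to a given root of unity are visibly the Fourier vector supported on the clockwise edges and the one supported on the counter-clockwise edges; together they certify $GM = 2$.

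There is no serious obstacle here. The one point requiring a moment's care is that the two blocks share the \emph{same} spectrum, so that the multiplicities add to $2$ rather than producing $2n$ distinct eigenvalues; confirming that $B^{cw}$ and $B^{ccw}$ are both order-$n$ cyclic permutations, one the inverse of the other, settles this. Verifying the characteristic polynomial $t^n - 1$ of the shift matrix is routine, e.g. by cofactor expansion down the first column or by directly exhibiting the Fourier eigenbasis.
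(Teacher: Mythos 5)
Your proof is correct and follows the same route as the paper: the paper simply notes the block decomposition $B = \mathrm{diag}(B^{cw}, B^{ccw})$ into two order-$n$ cyclic permutation matrices and then defers to standard references on eigenvalues of permutation matrices. You have merely filled in those standard details (characteristic polynomial $t^n-1$, Fourier eigenvectors, normality, and the observation that the two blocks share the same spectrum so multiplicities add to $2$), all of which is sound.
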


\begin{proof}
Results on eigenvalues of permutation matrices can be found in standard
references.
\end{proof}
Cycle graphs are important not only because they can be fully characterized,
but because NB-eigenvalues that are roots of unity are essential to
our later discussion. They always appear, in any graph, and are related
to the existence of collars, pendants, and bracelets (see Figure \ref{fig:collar-pendant}).
Note that every cycle graph is itself a collar or a pendant.

\subsection{Examples\label{sec:examples-tree-circle}}

Figure \ref{fig:tree-circle}(a) shows a tree with two layers. Per
Proposition \ref{pro:tree-eigenvalues}, all its NB-eigenvalues are
zero. Per proposition \ref{pro:tree-eigenvectors}, the characteristic
vectors of the orange edges lie in the kernel of $B$, while the characteristic
vectors of the green and blue edges lie in the kernels of $B^{2}$
and $B^{3}$, respectively. Per Proposition \ref{pro:zero}, we have
$GM(0)=3$ and $AM(0)=8$. Figure \ref{fig:tree-circle}(b) shows
a graph with one cycle and non-empty $1$-shell. Note the $1$-shell
is isomorphic to the graph in (a), and the cycle is a pendant of size
$3$ (see Figure \ref{fig:collar-pendant}). The $1$-shell gives
rise to the zero eigenvalue, and the composition of the kernels of
$B,B^{2},B^{3}$ is similar to that of (a). The pendant gives rise
to three new eigenvalues that are all third roots of unity. Example
eigenvectors are shown. Figure \ref{fig:tree-circle}(c) shows a graph
with two cycles and non-empty $1$-shell. The $1$-shell is the same
as in (a) and (b), and the cycle is a collar of length $4$. As before,
the $1$-shell gives rise to the zero eigenvalue and the kernels of
$B,B^{2},B^{3}$. The collar gives rise to eigenvalues that are fourth
roots of unity. Example eigenvectors are shown. The multiplicities
of the roots of unity in (b) and (c) is given by Theorem \ref{thm:unit-multiplicity}.

Figure \ref{fig:karate}(a) shows the well-known Karate Club graph
\cite{zachary1977information}. Its $1$-shell is comprised of only
one layer with one node (the orange node in the Figure), and therefore
$AM(0)=2,GM(0)=1$. The purple nodes form a collar of size $4$. Any
four of the green nodes that form a cycle form a collar of size $4$.
Note the nodes of degree greater than $2$ in the purple collar are
not neighbors, while the nodes of degree greater than $2$ in the
green collars are neighbors.

\begin{figure}
\includegraphics{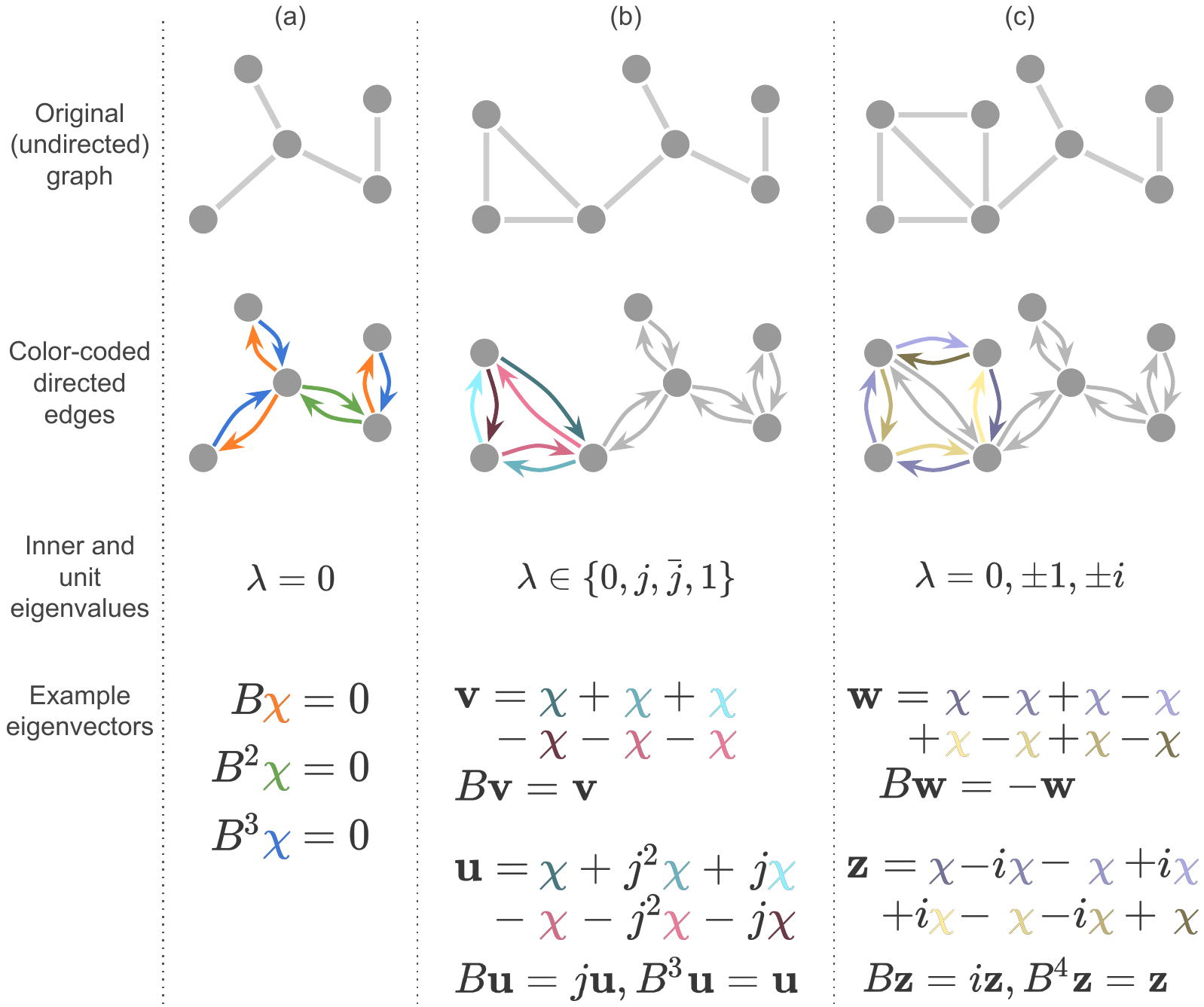}

\caption{\label{fig:tree-circle}Inner and unit eigenvalues of example graphs.
See Section \ref{sec:examples-tree-circle} for discussion. Here we
have $i^{2}=-1$ and $j=\frac{-1+i\sqrt{3}}{2}$. The characteristic
vectors $\chi$ are color-coded. For example, $\color[HTML]{ff7f28}\chi$
represents the characteristic vector of any of the orange edges, while
$\color[HTML]{477880}\chi$ is the characteristic vector of the sole
edge of the same color. }
\end{figure}

\begin{figure}
\begin{centering}
\includegraphics[scale=0.8]{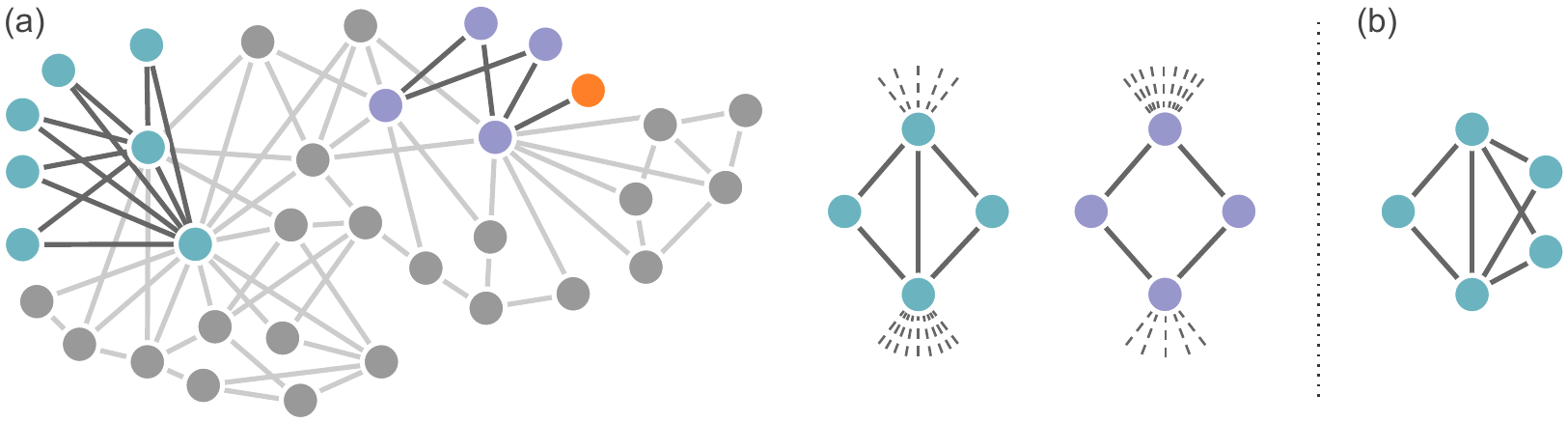}
\par\end{centering}
\caption{\label{fig:karate}\textbf{(a) }The Karate Club graph. See Section
\ref{sec:examples-tree-circle} for discussion. \textbf{(b)} A graph
made of two linearly independent and overlapping collars. See Section
\ref{sec:examples-unit} for discussion. Note it is a subgraph of
the Karate Club graph.}
\end{figure}

\section{Graphs with two cycles or more\label{sec:2-cycles}}

In this Section, all graphs have at least two cycles, and we continue
to assume minimum md$2$. We analyze the eigenvalues in order of increasing
magnitude, following the categories shown in Figure \ref{fig:eigenvalues-by-magnitude}.
The case $\lambda=0$ has already been dealt with in Section \ref{sec:1-shell}.
We recall well-known results on the impossibility of finding eigenvalues
with $0<|\lambda|<1$ in Section \ref{sec:inner}, which completes
the characterization of the inner eigenvalues. Next, we treat the
unit eigenvalues, $|\lambda|=1$, case by case in Section \ref{sec:unit}.
We then focus on the outer eigenvalues in Section \ref{sec:outer},
where we formulate a conjecture on their simplicity. Finally, we recall
known results on leading eigenvalues i.e. those with $|\lambda|=\rho$. 

\subsection{The inner eigenvalues\label{sec:inner}}

It is a well-known fact that eigenvalues with $0<|\lambda|<1$ are
in fact impossible. Kotani and Sunada \cite{Kotani2000}, Theorem
1.3(a), prove this in the language of Zeta functions, by making use
of the Ihara-Bass formula (\ref{eqn:ihara-bass}). For completeness,
here we paraphrase their theorem in the language of the NB-matrix.
\begin{thm}
[from \cite{Kotani2000}] Let $G$ be a graph with minimum degree
at least $2$ with at least two cycles. Then, every NB-eigenvalue
$\lambda$ satisfies $1\leq|\lambda|$. \qed
\end{thm}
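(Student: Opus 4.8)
The plan is to prove that no NB-eigenvalue can satisfy $0<|\lambda|<1$ by exploiting the Ihara--Bass formula (\ref{eqn:ihara-bass}) together with the reciprocal structure of the NB-spectrum. First I would recall that the algebraic multiplicity of $\lambda$ as an NB-eigenvalue equals the multiplicity of $t=1/\lambda$ as a root of $\det(I-tB)$. The right-hand side of (\ref{eqn:ihara-bass}) factors as $(1-t^2)^{m-n}\det(I-tA+t^2(D-I))$, so the nonzero NB-eigenvalues are governed by the quadratic pencil $Q(t)\coloneqq\det(I-tA+t^2(D-I))$. The key observation is that this polynomial is palindromic (up to the factor coming from the graph being md$2$): if $t_0$ is a root, then so is a reciprocal-type partner, which pairs eigenvalues inside the unit disk with eigenvalues outside it.

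The central step is to analyze the roots of the scalar polynomial $\det(I-tA+t^2(D-I))$ and show that none of them lies strictly outside the closed unit disk in the variable $t$ unless it is paired with one strictly inside, which via $\lambda=1/t$ forbids $0<|\lambda|<1$. Concretely, I would fix an eigenpair $B\mathbf{v}=\lambda\mathbf{v}$ with $\lambda\neq 0$ and restrict to the md$2$ case guaranteed by Proposition \ref{pro:reduction-to-2-core}, so that every node has degree at least $2$ and the $1$-shell contributes nothing new. The reciprocal pairing comes from a symmetry of the pencil $I-tA+t^2(D-I)$: substituting $t\mapsto 1/t$ and clearing denominators yields a matrix proportional to $(D-I)^{1/2}$-conjugate of the original, so the roots come in pairs $\{t_0, 1/((d\text{-weighted}) t_0)\}$. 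Because $D-I\succeq I$ in the md$2$ case, one shows each such pair cannot both satisfy $|t|>1$, forcing every nonzero eigenvalue $\lambda=1/t$ to obey $|\lambda|\ge 1$.

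Alternatively, and perhaps more cleanly, I would argue spectrally via the powers of $B$. If there were an eigenvalue with $0<|\lambda|<1$, then $B^p$ would decay along the corresponding eigenvector; combined with the interpretation of $B^p_{k\to l,i\to j}$ as counting NB-walks of length $p+1$ (a nonnegative integer matrix), a Perron--Frobenius-type argument on the reciprocal spectrum shows the nonzero spectrum is symmetric under $\lambda\mapsto 1/\lambda$ scaled by the spectral radius, and the absence of a gap below modulus $1$ then follows from the fact that $B$ restricted to the $2$-core has no null space, so $0$ is not in its spectrum there and the smallest-modulus nonzero eigenvalues must meet the unit circle. I would then cite Kotani--Sunada \cite{Kotani2000} for the precise Zeta-function computation underlying this symmetry.

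The main obstacle will be making the reciprocal-pairing symmetry fully rigorous in the presence of the degree weighting: the pencil $I-tA+t^2(D-I)$ is not exactly palindromic when $D\neq cI$, so the naive substitution $t\mapsto 1/t$ does not return the same polynomial but a conjugated version. The delicate point is to show that this conjugation preserves the \emph{count} of roots inside versus outside the unit circle, which requires either a careful continuity/homotopy argument in the degree sequence or an appeal to the explicit factorization of the Zeta function. Since the paper explicitly defers to \cite{Kotani2000} for this result, I would present the pairing heuristically and let the citation carry the weight of the full computation, noting that the md$2$ hypothesis is exactly what rules out the boundary complications at $t=\pm 1$.
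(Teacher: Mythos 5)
The paper itself offers no proof of this theorem: it is stated as a result imported from \cite{Kotani2000}, closed immediately, and the accompanying remark simply directs the reader to Section 6 of that reference. So your final fallback --- ``let the citation carry the weight'' --- coincides with what the paper does. The problem is the mathematics you build around the citation, which is not merely heuristic but wrong in identifiable ways. First, the pencil $Q(t)=\det\left(I-tA+t^{2}(D-I)\right)$ has no reciprocal symmetry, weighted or otherwise: substituting $t\mapsto 1/t$ and clearing denominators produces $\det\left(t^{2}I-tA+(D-I)\right)$, and no conjugation by $(D-I)^{1/2}$ turns this back into $Q(t)$, since conjugating the pencil replaces $D-I$ by $(D-I)^{-1}$ and $A$ by $(D-I)^{-1/2}A(D-I)^{-1/2}$ simultaneously; the two polynomials agree only when $D=2I$, i.e.\ for cycle graphs. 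Moreover, a global inside/outside pairing of the roots cannot exist, because it would contradict the very theorem being proved: any md$2$ graph with two cycles has Perron eigenvalue $\rho>1$, so the root $t=1/\rho$ lies strictly inside the unit disk, and its alleged partner outside the disk would be an NB-eigenvalue of modulus strictly less than $1$. Second, even granting a pairing in which partners have modulus product at most $1$, your deduction fails: ``the pair cannot both satisfy $|t|>1$'' only bounds half the roots, whereas the theorem requires \emph{every} root $t=1/\lambda$ of $\det(I-tB)$ to lie in the closed unit disk; the pair $(2,1/4)$ has product $1/2$ and still contains a root outside. Your alternative ``spectral'' argument inherits the same defect, since the asserted symmetry $\lambda\mapsto(\mathrm{const})/\lambda$ of the NB-spectrum is false for irregular graphs.

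What actually makes the Kotani--Sunada argument work is a local, not global, pairing. If $u=1/\lambda$ satisfies $\det\left(I-uA+u^{2}(D-I)\right)=0$, there is a nonzero vector $f$ with $\left(I-uA+u^{2}(D-I)\right)f=0$; taking the inner product with $f$ gives the scalar quadratic
\begin{equation*}
\|f\|^{2}-u\langle Af,f\rangle+u^{2}\langle (D-I)f,f\rangle=0,
\end{equation*}
whose coefficients are real because $A$ and $D$ are real symmetric. Its two roots have product $\|f\|^{2}/\langle (D-I)f,f\rangle\leq 1$, where the inequality is precisely the md$2$ hypothesis ($D-I\succeq I$). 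When $u$ is non-real, the other root of this real quadratic is forced to be $\bar{u}$, so the product of the roots equals $|u|^{2}\leq 1$ and the bound $|\lambda|\geq 1$ follows at once; the case of real $u$ requires a separate elementary estimate. The crucial point is that the ``pairing'' happens between the two roots of a quadratic form attached to each individual null vector, and it is the conjugate-pair structure that converts a product bound into a modulus bound. If you want your write-up to be more than a pointer to the literature, this is the argument to reproduce.
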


\begin{rem*}
The proof of this Theorem can be found in Section 6 of \cite{Kotani2000}.
It can be read without much background in the theory of graph Zeta
functions, by keeping in mind that if $\lambda$ is a NB-eigenvalue
then $1/\lambda$ is a pole of (\ref{eqn:ihara-bass}).
\end{rem*}

\subsection{The unit eigenvalues\label{sec:unit}}

We give a complete characterization of the unit eigenvalues and their
eigenvectors in arbitrary graphs. Some of our arguments require the
graph to be md$2$, as we have been assuming, but Section \ref{sec:zero-one}
establishes that nodes of degree $1$ (and in fact any node in the
$1$-shell) have no influence on the unit eigenvalues. Thus, the results
here are valid for arbitrary graphs, without restriction. We first
prove that all unit eigenvalues must be roots of unity. In this case,
there exists a set of nodes $\mathcal{C}$ that is always a pendant,
a collar, or a bracelet such that the associated eigenvector $\mathbf{v}$
is supported on $\mathcal{C}$, i.e. $\mathbf{v}_{k\to l}\neq0$ if
and only if $k,l\in\mathcal{C}$. 

\subsubsection{Only roots of unity are NB-eigenvalues}

Assume $B\mathbf{v}=\lambda\mathbf{v}$. By the properties of unitary
matrices, $\lambda$ is unitary if and only if$B^{*}B\mathbf{v}=BB^{*}\mathbf{v}=\mathbf{v}$.
Therefore, we start our discussion by computing $B^{*}B$ and $BB^{*}$.
For this purpose, define $\nbcent[l]\coloneqq\sum_{i}a_{il}\mathbf{v}_{i\to l}$
and $\incent\coloneqq\sum_{i}a_{ik}\mathbf{v}_{k\to i}$. Recall from
Equation (\ref{eqn:nb-centrality}) that if $\mathbf{v}$ is the Perron
eigenvector of $B$, then $\nbcent$ is the NB-centrality of $k$.
\begin{lem}
\label{lem:bbt}For any vector $\mathbf{v}$, the following hold (see
Figure \ref{fig:bbt}).
\begin{align}
\left(B^{*}B\mathbf{v}\right)_{k\to l} & =\left(d_{l}-2\right)\nbcent[l]+\mathbf{v}_{k\to l}\\
\left(BB^{*}\mathbf{v}\right)_{k\to l} & =\left(d_{k}-2\right)\incent+\mathbf{v}_{k\to l}.
\end{align}
\end{lem}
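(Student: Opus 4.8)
The plan is to obtain both identities by a direct computation from the definition (\ref{eqn:nbm}) of $B$, pushing everything through the aggregation formula (\ref{eqn:nbm-action}). The single ingredient I would establish first is the action of the adjoint. Because $B$ has real entries, $B^{*}=B^{T}$, so transposing (\ref{eqn:nbm}) gives $B^{*}_{i\to j,\,k\to l}=\delta_{jk}\left(1-\delta_{il}\right)$, from which one reads off that $B^{*}$ aggregates the \emph{outgoing} edges at the target node, omitting the backtrack:
\begin{equation}
\left(B^{*}\mathbf{w}\right)_{i\to j}=\sum_{l}a_{jl}\mathbf{w}_{j\to l}-\mathbf{w}_{j\to i}.
\end{equation}
This is the mirror image of (\ref{eqn:nbm-action}), and with both formulas in hand the rest is bookkeeping.

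For the first identity I would set $\mathbf{w}=B\mathbf{v}$ and evaluate the adjoint formula at the edge $k\to l$, which yields $\sum_{m}a_{lm}\mathbf{w}_{l\to m}-\mathbf{w}_{l\to k}$. Substituting (\ref{eqn:nbm-action}) for each $\mathbf{w}_{l\to m}$ and for $\mathbf{w}_{l\to k}$ produces a double sum that collapses once one observes that both $\sum_{p}a_{pl}\mathbf{v}_{p\to l}$ and $\sum_{m}a_{lm}\mathbf{v}_{m\to l}$ are nothing but $\nbcent[l]$. Using $\sum_{m}a_{lm}=d_{l}$ and collecting the three copies of $\nbcent[l]$ leaves exactly $\left(d_{l}-2\right)\nbcent[l]+\mathbf{v}_{k\to l}$. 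The second identity follows by the dual computation: apply $B^{*}$ first and then $B$, substitute (\ref{eqn:nbm-action}) and the adjoint formula, and identify the inner aggregation $\sum_{m}a_{km}\mathbf{v}_{k\to m}$ with $\incent$, giving $\left(d_{k}-2\right)\incent+\mathbf{v}_{k\to l}$.

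I expect the only delicate point to be the index bookkeeping rather than any conceptual difficulty. In each composition the lone ``backtrack'' term must be kept separate from the degree-weighted bulk term, and one must correctly recognize that the inner sums equal the centrality quantities $\nbcent[l]$ and $\incent$ so that the whole expression telescopes down to a scalar multiple of a single centrality plus the surviving term $\mathbf{v}_{k\to l}$. A useful sanity check along the way is the source/target symmetry ($B\leftrightarrow B^{*}$, realized by reversing edge orientations), which interchanges the two formulas by swapping $k\leftrightarrow l$ and $\nbcent[l]\leftrightarrow\incent$, thereby confirming that the coefficients are indeed $d_{l}-2$ in the first case and $d_{k}-2$ in the second.
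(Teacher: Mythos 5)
Your proposal is correct and takes essentially the same route as the paper: a direct entrywise computation from the definition (\ref{eqn:nbm}), with the sums collapsing to the centrality quantities $\nbcent[l]$ and $\incent$. The only (cosmetic) difference is that you first write out the action of $B^{*}$ as a mirror of (\ref{eqn:nbm-action}) and then compose the two one-step formulas, whereas the paper expands $B^{*}B$ as a single nested double sum; the bookkeeping is identical.
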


\begin{proof}
This is direct from the definition of $B$. For brevity, we show only
the case $B^{*}B\mathbf{v}$. 
\begin{align*}
\left(B^{*}B\mathbf{v}\right)_{k\to l} & =\sum_{i\to j}\delta_{il}\left(1-\delta_{jk}\right)\sum_{r\to s}\delta_{is}\left(1-\delta_{jr}\right)\mathbf{v}_{r\to s}\\
 & =\sum_{j\neq k}a_{jl}\left(\sum_{r}a_{rl}\mathbf{v}_{r\to l}-\mathbf{v}_{j\to l}\right)\\
 & =\left(d_{l}-1\right)\nbcent[l]-\nbcent[l]+\mathbf{v}_{k\to l}.
\end{align*}
\end{proof}

\begin{figure}
\begin{centering}
\includegraphics{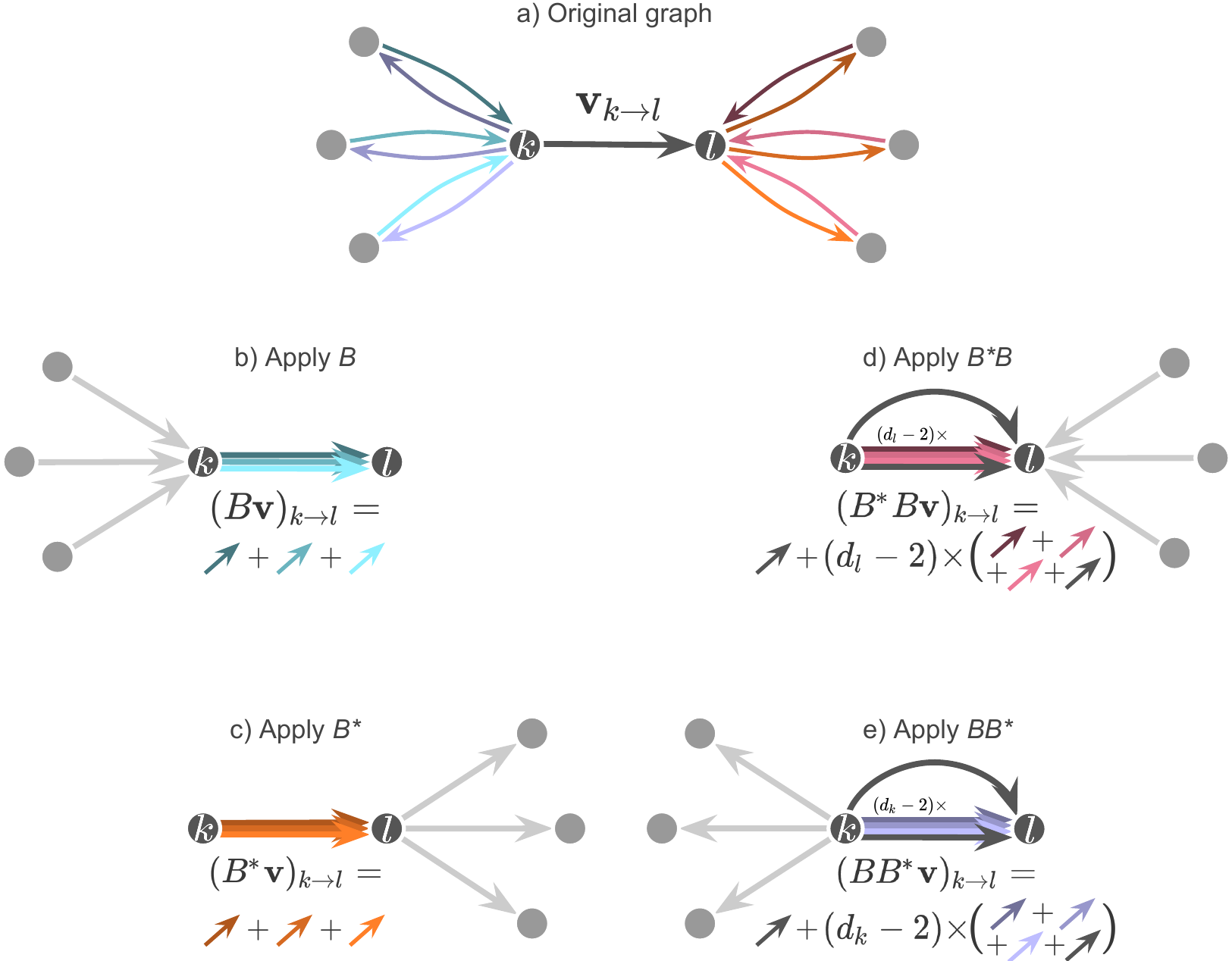}
\par\end{centering}
\caption{\label{fig:bbt}The action of $B,B^{*},B^{*}B$ and $BB^{*}$ on a vector $\mathbf{v}$. See Lemma \ref{lem:bbt}.}
\end{figure}

\begin{rem*}
Note that $\incent$ sums over the directed edges that have $k$ as
a source, reflected by the use of ``$k\!\to$'' in the notation.
Similarly, $\nbcent$ sums over the directed edges that have $k$
as a target, reflected by the use of ``$\to\!k$''. We pronounce
$\incent$ as \emph{``$\mathbf{v}$ from k'' }and $\nbcent$ as\emph{
``$\mathbf{v}$ into $k$''.}
\end{rem*}
\begin{example}
\begin{figure}
\begin{centering}
\includegraphics{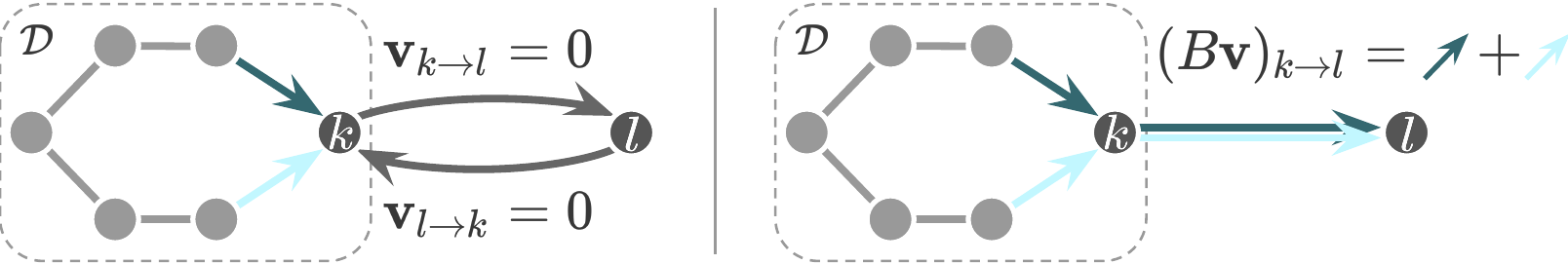}
\par\end{centering}
\caption{\label{fig:cycle-support}The nodes in $\mathcal{D}$ induce a cycle.
The node $k\in\mathcal{D}$ has a neighbor $l\protect\notin\mathcal{D}$.\textbf{
Left:} a vector $\mathbf{v}$ supported on $\mathcal{D}$. \textbf{Right:}
if $\mathbf{v}$ is an eigenvector, the sum of all values incoming
to $k$ must be zero. If $\left(B\mathbf{v}\right)_{k\to l}$ is non-zero,
we say that $\mathbf{v}$ leaks out of $\mathcal{D}$ via $k$. Nodes
with degree $2$ can never leak.}
\end{figure}

\label{exm:cycle-support}Due to Lemma \ref{lem:bbt}, to understand
the eigenvectors of unit eigenvalues, it is sufficient to understand
those $\mathbf{v}$ that satisfy $\left(d_{k}-2\right)\incent=\left(d_{l}-2\right)\nbcent[l]=0$
for each pair of neighboring $k,l$. For this purpose, consider the
following situation and the accompanying Figure \ref{fig:cycle-support}.
Let $G$ be a graph with NB-matrix $B$. Let $\mathcal{D}$ be a set
of nodes whose induced subgraph is a cycle. Suppose $B\mathbf{v}=\lambda\mathbf{v}$
with $\lambda\neq0$ and that $\mathbf{v}$ is supported on $\mathcal{D}$.
Let $k\in\text{\ensuremath{\mathcal{D}}},l\notin\mathcal{D}$ be neighbors.
Since $\mathbf{v}$ is an eigenvector supported on $\mathcal{D}$,
we have
\begin{equation}
0=\lambda\mathbf{v}_{k\to l}=\left(B\mathbf{v}\right)_{k\to l}=\sum_{i}a_{ik}\mathbf{v}_{i\to k}-\mathbf{v}_{l\to k}=\nbcent,
\end{equation}
where the last equality uses the fact that $\mathbf{v}_{l\to k}=0$.
Thus, a necessary condition for $\mathbf{v}$ to be an eigenvector
supported on a cycle $\mathcal{D}$ is that for every $k\in\mathcal{D}$
with a neighbor outside of $\mathcal{D}$, we must have $\nbcent=0$.
Note that if $k\in\mathcal{D}$ has no neighbors outside of $\mathcal{D}$,
i.e. if its degree is $2$, then there is no restriction on $\nbcent$.
In other words, $\mathbf{v}$ satisfies $\left(d_{k}-2\right)\nbcent=0$
for each $k$, and therefore $B^{*}B\mathbf{v}=\mathbf{v}$ by Lemma
\ref{lem:bbt}. Furthermore, we have $BB^{*}\mathbf{v}=\mathbf{v}$
as well, by Lemma \ref{lem:nb-in-cent}. Lastly, if $r$ is the length
of the cycle induced by $\mathcal{D}$, we have $B^{r}\mathbf{v}=\mathbf{v}$
and thus $\lambda^{r}=1$. 
\end{example}

Before moving forward, let us capture the property $\left(d_{k}-2\right)\nbcent=0$
with the following terminology, inspired by Example \ref{exm:cycle-support}.
\begin{defn}
\label{def:leaky}Consider a vector $\mathbf{v}$ (not necessarily
an eigenvector) with support $\mathcal{D}$ (not necessarily a cycle).
If there is a $k\in\mathcal{D}$ such that $\left(d_{k}-2\right)\nbcent\neq0$,
we say that \emph{$\mathbf{v}$ leaks out of $\mathcal{D}$ via $k$},
or simply that \emph{$\mathbf{v}$ is leaky}. If $\mathbf{v}$ does
not leak via any node, we say that $\mathbf{v}$ is \emph{non-leaky}.
See Figure \ref{fig:cycle-support}.
\end{defn}

Example \ref{exm:cycle-support} shows that if $\mathbf{v}$ is an
eigenvector supported on a cycle, then $\mathbf{v}$ must be non-leaky
and its corresponding eigenvalue must be a root of unity. The following
theorem is essentially a generalization of this observation.
\begin{thm}
\label{thm:root}Suppose $B\mathbf{v}=\lambda\mathbf{v}$ with $\lambda\neq0$.
$\mathbf{v}$ is non-leaky if and only if $\lambda$ is a root of
unity.
\end{thm}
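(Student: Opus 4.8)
The plan is to prove the two implications separately, both times exploiting the eigenvector identity (\ref{eqn:eigenvector-in-two-directions}) and the standing md$2$ assumption that every node has degree at least $2$. For the easy direction, that $\lambda$ being a root of unity implies $\mathbf{v}$ is non-leaky, note $\lambda^{r}=1$ forces $|\lambda|=1$, so it suffices to deduce non-leakiness from $|\lambda|=1$ alone. I would evaluate $\langle\mathbf{v},B^{*}B\mathbf{v}\rangle$ in two ways. On one hand it equals $\|B\mathbf{v}\|^{2}=|\lambda|^{2}\|\mathbf{v}\|^{2}=\|\mathbf{v}\|^{2}$. On the other, Lemma \ref{lem:bbt} gives $(B^{*}B\mathbf{v})_{k\to l}=(d_{l}-2)\nbcent[l]+\mathbf{v}_{k\to l}$; pairing against $\overline{\mathbf{v}}$ and grouping the oriented edges by their common target $l$, using $\sum_{k}\overline{\mathbf{v}_{k\to l}}=\overline{\nbcent[l]}$, gives $\langle\mathbf{v},B^{*}B\mathbf{v}\rangle=\sum_{l}(d_{l}-2)|\nbcent[l]|^{2}+\|\mathbf{v}\|^{2}$. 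Comparing forces $\sum_{l}(d_{l}-2)|\nbcent[l]|^{2}=0$, and since $d_{l}\geq2$ makes every summand nonnegative, each vanishes, so $(d_{l}-2)\nbcent[l]=0$ for all $l$ --- precisely non-leakiness in the sense of Definition \ref{def:leaky}.

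For the converse, that non-leakiness implies $\lambda$ is a root of unity, I would construct an explicit permutation $\phi$ of the support $\mathcal{S}=\{e:\mathbf{v}_{e}\neq0\}$ recording, for each support edge, the unique edge feeding it. Fix $k\to l\in\mathcal{S}$ and rewrite (\ref{eqn:eigenvector-in-two-directions}) as $\lambda\mathbf{v}_{k\to l}=\nbcent-\mathbf{v}_{l\to k}$. If $d_{k}=2$ with neighbors $l,m$, then $\nbcent=\mathbf{v}_{l\to k}+\mathbf{v}_{m\to k}$ and the relation collapses to $\lambda\mathbf{v}_{k\to l}=\mathbf{v}_{m\to k}$; set $\phi(k\to l)=m\to k$. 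If $d_{k}>2$, non-leakiness gives $\nbcent=0$, whence $\lambda\mathbf{v}_{k\to l}=-\mathbf{v}_{l\to k}$; set $\phi(k\to l)=l\to k$. In both cases $\phi(k\to l)\in\mathcal{S}$ (its value is nonzero because the left-hand side is), its target is $k$, and $\lambda\mathbf{v}_{e}=\varepsilon_{e}\mathbf{v}_{\phi(e)}$ with $\varepsilon_{e}=+1$ when the source of $e$ has degree $2$ and $\varepsilon_{e}=-1$ otherwise. Since $\phi(e_{1})=\phi(e_{2})$ forces $e_{1},e_{2}$ to leave the common node $k$, and the formulas send the out-edges of $k$ injectively into its in-edges, $\phi$ is injective, hence a permutation of the finite set $\mathcal{S}$. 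Taking any $\phi$-cycle $e,\phi(e),\dots,\phi^{r}(e)=e$ and multiplying the relations $\lambda\mathbf{v}_{\phi^{s}(e)}=\varepsilon_{\phi^{s}(e)}\mathbf{v}_{\phi^{s+1}(e)}$ around it, the nonzero factor $\prod_{s}\mathbf{v}_{\phi^{s}(e)}$ cancels and leaves $\lambda^{r}=\prod_{s}\varepsilon_{\phi^{s}(e)}=\pm1$, so $\lambda^{2r}=1$.

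The main obstacle is this converse, and within it the passage from $|\lambda|=1$ to an actual root of unity. The positivity computation alone yields only $|\lambda|=1$, and although $\lambda$ is an algebraic integer as an eigenvalue of the integer matrix $B$, Kronecker's theorem cannot be applied directly: it would require every Galois conjugate of $\lambda$ to lie on the unit circle, which one non-leaky eigenvector does not provide. The permutation $\phi$ avoids this by exhibiting $\mathbf{v}$ as a signed eigenvector of a genuine permutation action on $\mathcal{S}$, so that $\lambda^{2r}=1$ falls out combinatorially, generalizing Example \ref{exm:cycle-support}. The point needing care is verifying simultaneously that $\phi$ maps $\mathcal{S}$ into itself and is injective at degree-$2$ and higher-degree nodes alike; here md$2$ is essential, as a degree-$1$ source would admit no feeding edge and destroy the bijection.
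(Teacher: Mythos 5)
Your proof is correct, but it takes a genuinely different route from the paper's. For the forward direction, the paper simply invokes Lemma~\ref{lem:non-leaky-unitary} (a root of unity is unitary, and a unitary eigenvalue forces non-leakiness); your quadratic-form computation, comparing $\langle\mathbf{v},B^{*}B\mathbf{v}\rangle=\|B\mathbf{v}\|^{2}=\|\mathbf{v}\|^{2}$ with the expansion $\sum_{l}(d_{l}-2)\bigl|\nbcent[l]\bigr|^{2}+\|\mathbf{v}\|^{2}$ and using $d_{l}\geq2$ to kill each nonnegative summand, re-derives that lemma's content and is in fact tighter than the paper's own justification, which asserts without detail that ``the restriction of $B$ to the span of $\mathbf{v}$ is a unitary operator'' (a claim that is delicate since $B^{*}$ need not preserve that span). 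For the converse, the paper argues in two cases: if the support contains no node of degree $2$, a short manipulation of (\ref{eqn:eigenvector-in-two-directions}) gives $\lambda^{2}=1$; if it does contain one, the paper builds an explicit non-leaky eigenvector $\mathbf{c}$ supported on a single cycle through that node (via Lemma~\ref{lem:cycle-degree-2}) and concludes $\lambda^{r}=1$ from Example~\ref{exm:cycle-support}. Your permutation map $\phi$, with $\lambda\mathbf{v}_{e}=\varepsilon_{e}\mathbf{v}_{\phi(e)}$ and $\varepsilon_{e}=\pm1$, unifies both cases: injectivity on the finite support makes $\phi$ a permutation, and multiplying around a $\phi$-cycle gives $\lambda^{2r}=1$ with no case split and no appeal to Lemma~\ref{lem:cycle-degree-2}. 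What the paper's construction buys, and yours does not, is the cycle-localized eigenvector $\mathbf{c}$ itself: the propositions of Section~\ref{sec:complex-roots} (the decomposition $\mathbf{v}=\sum_{i}\mathbf{c}^{i}$ and ultimately Corollary~\ref{cor:eigenspace-basis}) explicitly reuse ``the notations used in Theorem~\ref{thm:root}'', so the paper's proof doubles as scaffolding for the structure theory, whereas yours proves the theorem as stated but yields no such byproduct. One point to tighten: Definition~\ref{def:leaky} is phrased in terms of $\incent$, while both halves of your argument work with $\nbcent$ (and $\nbcent[l]$); for an eigenvector with $\lambda\neq0$ the two vanishing conditions are equivalent by Lemma~\ref{lem:nb-in-cent}, and since the paper itself conflates the two, this is cosmetic, but you should cite that lemma at the two places where you pass between them.
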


\begin{proof}
If $\lambda$ is a root of unity then it is unitary; by Lemma \ref{lem:non-leaky-unitary},
$\mathbf{v}$ is non-leaky. Now assume $\mathbf{v}$ is non-leaky
and let $\mathcal{D}$ be the support of $\mathbf{v}$. We proceed
in two cases.
\begin{enumerate}
\item Assume that $\mathcal{D}$ contains no nodes of degree $2$ and take
two neighbors $k,l$ with $\mathbf{v}_{k\to l}\neq0$. Since $\mathbf{v}$
is non-leaky, we must have $\nbcent=\nbcent[l]=0$. By Equation (\ref{eqn:eigenvector-in-two-directions})
this is equivalent to $\lambda\mathbf{v}_{k\to l}+\mathbf{v}_{l\to k}=0=\lambda\mathbf{v}_{l\to k}+\mathbf{v}_{k\to l}$.
Multiply the first equation by $\lambda$ and replace in the second
equation to obtain $0=\left(\lambda^{2}-1\right)\mathbf{v}_{k\to l}.$
Therefore, $\lambda$ must be a square root of unity.
\item \label{enu:thm-degree-2}Assume that $k\in\mathcal{D}$ has degree
$2$. We will show there exists a vector $\mathbf{c}$ such that $B\mathbf{c}=\lambda\mathbf{c}$
and $\mathbf{c}$ is non-leaky and supported on a cycle. In that case,
$\lambda$ must be a root of unity by Example \ref{exm:cycle-support}.
Let $i,l\in\mathcal{D}$ be the two neighbors of $k$, and note that
$\lambda\text{\ensuremath{\mathbf{v}}}_{k\to l}=\mathbf{v}_{i\to k}$.
Take a $\mathcal{C}\subset\mathcal{D}$ such that $i,k,l\in\mathcal{C}$
and the graph induced by $\mathcal{C}$ is a cycle. This is always
possible due to Lemma \ref{lem:cycle-degree-2}. Suppose $\mathcal{C}$
contains $r$ nodes and label them by consecutive numbers $k=1,l=2,\ldots,i=r$.
Define $\mathbf{c}_{i\to k}\coloneqq\mathbf{v}_{i\to k}$ and $\mathbf{c}_{k\to l}\coloneqq\mathbf{v}_{k\to l}$.
Define all other edges as $\mathbf{c}_{j\to(j+1)}\coloneqq\lambda\mathbf{c}_{(j-1)\to j},\,j=2,\ldots,r-1$.
By construction, $\mathbf{c}$ is non-leaky and supported on a cycle;
by Example (\ref{exm:cycle-support}) it must be an eigenvector with
eigenvalue $\lambda$. Therefore, $\lambda^{r}=1$.
\end{enumerate}
\end{proof}
\begin{thm}
\label{thm:unit-multiplicity}$\lambda$ is not defective, i.e. $AM(\lambda)=GM(\lambda)$.
\end{thm}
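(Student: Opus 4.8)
The plan is to exploit the fact that, for a unit eigenvalue, a right eigenvector is simultaneously a left eigenvector for the same eigenvalue; this ``partial normality'' forces every Jordan block attached to $\lambda$ to be trivial.

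First I would fix a unit eigenvalue $\lambda$ (so $|\lambda|=1$) together with an eigenvector $\mathbf{v}$, $B\mathbf{v}=\lambda\mathbf{v}$. Since $\lambda$ is unitary, its eigenvector is non-leaky by Lemma \ref{lem:non-leaky-unitary}, so Lemma \ref{lem:bbt} gives $B^{*}B\mathbf{v}=\mathbf{v}$. Combining this with $B\mathbf{v}=\lambda\mathbf{v}$ yields $\lambda B^{*}\mathbf{v}=\mathbf{v}$, and because $|\lambda|=1$ implies $\lambda^{-1}=\bar{\lambda}$, we obtain $B^{*}\mathbf{v}=\bar{\lambda}\mathbf{v}$. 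Taking conjugate transposes, this reads $\mathbf{v}^{*}B=\lambda\mathbf{v}^{*}$: the eigenvector $\mathbf{v}$ is also a \emph{left} eigenvector of $B$ with the same eigenvalue $\lambda$.

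Next I would argue by contradiction that no Jordan block of $\lambda$ can have size larger than one, which is precisely the statement $AM(\lambda)=GM(\lambda)$. If some block had size at least two, there would be a generalized eigenvector $\mathbf{w}$ with $(B-\lambda I)\mathbf{w}=\mathbf{v}$ for some nonzero eigenvector $\mathbf{v}$. Pairing on the left with $\mathbf{v}^{*}$ and invoking the left-eigenvector identity from the previous step,
\[
\mathbf{v}^{*}\mathbf{v}=\mathbf{v}^{*}(B-\lambda I)\mathbf{w}=\left(\mathbf{v}^{*}B-\lambda\mathbf{v}^{*}\right)\mathbf{w}=\left(\lambda\mathbf{v}^{*}-\lambda\mathbf{v}^{*}\right)\mathbf{w}=0.
\]
But $\mathbf{v}^{*}\mathbf{v}=\|\mathbf{v}\|^{2}>0$ since $\mathbf{v}\neq0$, a contradiction. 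Hence every Jordan block of $\lambda$ is $1\times1$ and $\lambda$ is not defective.

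The conceptual heart of the argument, and the step on which everything rests, is the identity $B^{*}\mathbf{v}=\bar{\lambda}\mathbf{v}$: although $B$ is not normal, it behaves like a normal matrix on each unit eigenspace, where left and right eigenvectors coincide. The only genuine work is justifying $B^{*}B\mathbf{v}=\mathbf{v}$, which is exactly the content of the non-leaky characterization (Lemma \ref{lem:bbt} together with the fact that unit eigenvectors are non-leaky); once that is secured the rest is a standard two-line Jordan-chain contradiction. I would also note that the argument never uses the precise motif structure (collars, pendants, bracelets), so it applies uniformly to all unit eigenvalues, including $\lambda=\pm1$.
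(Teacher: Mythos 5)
Your proof is correct, and it takes a genuinely different --- and considerably shorter --- route than the paper's. The paper argues directly on generalized eigenvectors: it takes $\mathbf{c}$ with $(B-\lambda I)^{2}\mathbf{c}=0$, sets $\mathbf{v}=(B-\lambda I)\mathbf{c}$, and runs a multi-case analysis (nodes outside the support of $\mathbf{v}$, nodes of degree $2$, nodes of higher degree, with sub-cases $\lambda^{2}=1$ and $\lambda^{r}=1,\,r\neq2$) to show that $\mathbf{c}$ is itself non-leaky; it then splits $\mathbf{c}=\mathbf{c}'+\mathbf{c}''$ along the support of $\mathbf{v}$ and invokes Lemma \ref{lem:non-leaky-combination} to conclude that $\mathbf{c}$ is an eigenvector, forcing $\mathbf{v}=0$. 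You instead extract from $B^{*}B\mathbf{v}=\mathbf{v}$ and $B\mathbf{v}=\lambda\mathbf{v}$ the identity $B^{*}\mathbf{v}=\bar{\lambda}\mathbf{v}$, so that every right unit eigenvector is simultaneously a left eigenvector for the same eigenvalue, and then the two-line Jordan-chain pairing $\mathbf{v}^{*}\mathbf{v}=\mathbf{v}^{*}(B-\lambda I)\mathbf{w}=0$ rules out any block of size at least two; this is valid, since $AM(\lambda)>GM(\lambda)$ would produce exactly such a pair $\mathbf{v},\mathbf{w}$. Your route needs neither Theorem \ref{thm:root} nor any support structure (cycles, pendants, collars, bracelets), treats $\lambda=\pm1$ and the complex roots of unity uniformly, and avoids the step in the paper's case 2 that is only sketched (``the general case \ldots{} is similar''); it also makes explicit the partial normality on unit eigenspaces that Section \ref{sec:diagonalizability} later exploits. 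What the paper's longer proof buys in exchange is structural information --- generalized eigenvectors are non-leaky and decompose over cycle supports --- in the spirit of Corollary \ref{cor:eigenspace-basis}, whereas your argument yields non-defectiveness and nothing more. One caveat you should state explicitly: both proofs rest on the same nontrivial input, namely the implication ``$\lambda$ unitary $\Rightarrow B^{*}B\mathbf{v}=\mathbf{v}$'' of Lemma \ref{lem:non-leaky-unitary} (equivalently, non-leakiness of unit eigenvectors, via Lemmas \ref{lem:bbt} and \ref{lem:nb-in-cent}). That implication is special to the NB-matrix --- it is false for general non-normal matrices --- and the paper's own justification of that direction is quite terse (the adjoint of a restriction is not in general the restriction of the adjoint), so your proof inherits, rather than removes, that dependency.
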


\begin{proof}
We show that any generalized eigenvector must be an eigenvector. Let
$\mathbf{c}$ be such that $\left(B-\lambda I\right)^{2}\mathbf{c}=0$,
and define $\mathbf{v}\coloneqq\left(B-\lambda I\right)\mathbf{c}$.
Note that $\mathbf{v}$ is an eigenvector of eigenvalue $\lambda$
(or it is the zero vector). We first prove that $\mathbf{c}$ must
be non-leaky; we proceed in three cases. First, if $\mathbf{v}_{k\to l}$
equals $0$, we have $\left(B\mathbf{c}\right)_{k\to l}=\lambda\mathbf{c}_{k\to l}$.
That is, $\mathbf{c}$ behaves like an eigenvector outside the support
of $\mathbf{v}$. In particular, {\ensuremath{\left( d_k -2 \right) \tensor*[^{k}]{\vec{\mathbf{c}}}{}} = 0}
for any $k$ not in the support of $\mathbf{v}$. Second, for any
node $k$ of degree $2$, we have ${\ensuremath{\left(d_{k}-2\right)\tensor*[^{k}]{\vec{\mathbf{c}}}{}}=0}$,
regardless of whether or not $k$ is in the support of $\mathbf{v}$. 

Third, let $k$ be in the support of $\mathbf{v}$ with $d_{k}>2$
and thus $\incent=0$. Using the definition $\mathbf{v}_{k\to l}={\ensuremath{\tensor*[^{k}]{\vec{\mathbf{c}}}{}}}-\mathbf{c}_{l\to k}-\lambda\,\mathbf{c}_{k\to l}$
and summing over every neighbor $l$ of $k$, we obtain $\lambda{\ensuremath{\tensor*[^{k}]{\vec{\mathbf{c}}}{}}}=\left(d_{k}-1\right){\ensuremath{\tensor*[]{\vec{\mathbf{c}}}{^{k}}}}$,
or equivalently 
\begin{equation}
\lambda{\ensuremath{\tensor*[^{k}]{\vec{\mathbf{c}}}{}}}-{\ensuremath{\tensor*[]{\vec{\mathbf{c}}}{^{k}}}}=\left(d_{k}-2\right){\ensuremath{\tensor*[]{\vec{\mathbf{c}}}{^{k}}}}.\label{eqn:target-not-leaky}
\end{equation}
In the following, we show that $\mathbf{c}$ does not leak via $k$
by showing that the two members of this last equation in fact equal
zero. We proceed in two sub-cases.
\begin{enumerate}
\item Assume $\lambda^{2}=1$. Let $l$ be a node outside the support of
$\mathbf{v}$, i.e. $\mathbf{c}$ behaves like an eigenvector on $k\to l$
and 
\[
\lambda\mathbf{c}_{k\to l}=\left(B\mathbf{c}\right)_{k\to l}={\ensuremath{\tensor*[]{\vec{\mathbf{c}}}{^{k}}}}-\mathbf{c}_{l\to k}=\sum_{l\in supp(\mathbf{v})}\mathbf{c}_{l\to k}+\sum_{l\notin supp(\mathbf{v})}\mathbf{c}_{l\to k}-\mathbf{c}_{l\to k}.
\]

Note that $\sum_{l\notin supp(\mathbf{v})}\mathbf{c}_{l\to k}=0$
since $\mathbf{c}$ is an eigenvector outside of the support of $\mathbf{v}$
and therefore it does not leak through $k$. Therefore $\lambda\mathbf{c}_{k\to l}=\sum_{l\in supp(\mathbf{v})}\mathbf{c}_{l\to k}-\mathbf{c}_{l\to k}$.
On the other hand, we have $\mathbf{c}_{k\to l}+\lambda\mathbf{c}_{l\to k}={\ensuremath{\tensor*[]{\vec{\mathbf{c}}}{^{l}}}}=0$,
since $\mathbf{c}$ does not leak through $l$. These two equations
simplify to 
\[
\sum_{l\in supp(\mathbf{v})}\mathbf{c}_{l\to k}=\left(1-\lambda^{2}\right)\mathbf{c}_{l\to k}=0.
\]

All together, we have 
\[
{\ensuremath{\tensor*[]{\vec{\mathbf{c}}}{^{k}}}}=\sum_{l\in supp(\mathbf{v})}\mathbf{c}_{l\to k}+\sum_{l\notin supp(\mathbf{v})}\mathbf{c}_{l\to k}=0+0=0,
\]

and Equation (\ref{eqn:target-not-leaky}) equals zero, as desired.
\item Assume $\lambda^{r}=1$, with $r\neq2$. To fix ideas, assume that
$\mathbf{v}$ is supported on a single cycle. In this case, $k$ has
exactly two neighbors in the support of $\mathbf{v}$, call them $i$
and $j$. Since $\mathbf{v}$ is non-leaky, we have 
\begin{alignat*}{1}
\incent & =0\\
\mathbf{v}_{k\to j}+\mathbf{v}_{k\to i} & =0\\
{\ensuremath{\tensor*[]{\vec{\mathbf{c}}}{^{k}}}}-\mathbf{c}_{j\to k}-\lambda\mathbf{c}_{k\to j}+{\ensuremath{\tensor*[]{\vec{\mathbf{c}}}{^{k}}}}-\mathbf{c}_{i\to k}-\lambda\mathbf{c}_{k\to i} & =0\\
\sum_{l\notin supp(\mathbf{v})}\mathbf{c}_{l\to k}+\mathbf{c}_{i\to k}+\sum_{l\notin supp(\mathbf{v})}\mathbf{c}_{l\to k}+\mathbf{c}_{j\to k} & =\lambda\left(\mathbf{c}_{k\to i}+\mathbf{c}_{k\to j}\right)\\
\mathbf{c}_{i\to k}+\mathbf{c}_{j\to k} & =\lambda\left(\mathbf{c}_{k\to i}+\mathbf{c}_{k\to j}\right)\\
\mathbf{c}_{i\to k}+\mathbf{c}_{j\to k}+\sum_{l\notin supp(\mathbf{v})}\mathbf{c}_{l\to k} & =\lambda\left(\mathbf{c}_{k\to i}+\mathbf{c}_{k\to j}+\sum_{l\notin supp(\mathbf{v})}\mathbf{c}_{k\to l}\right)\\
{\ensuremath{\tensor*[]{\vec{\mathbf{c}}}{^{k}}}} & =\lambda{\ensuremath{\tensor*[^{k}]{\vec{\mathbf{c}}}{}}},
\end{alignat*}

where we have used that $\sum_{l\notin supp(\mathbf{v})}\mathbf{c}_{l\to k}=\sum_{l\notin supp(\mathbf{v})}\mathbf{c}_{k\to l}=0$.
This shows that Equation (\ref{eqn:target-not-leaky}) equals zero.
The general case when $\mathbf{v}$ is not supported on a single cycle
is similar but taking into consideration that $k$ has exactly two
neighbors in each of the cycles on which $\mathbf{v}$ is supported.
\end{enumerate}
We have established that $\mathbf{c}$ is non-leaky. Now write $\mathbf{c}=\mathbf{c}'+\mathbf{c}''$,
where $\mathbf{c}'$ is supported on the same support as $\mathbf{v}$,
and $\mathbf{c}''$ is supported outside of it. As per our previous
observation, $\mathbf{c}''$ is an eigenvector of eigenvalue $\lambda$
and therefore it is non-leaky. Since $\mathbf{c}$ is also non-leaky,
$\mathbf{c}'$ must be non-leaky as well. Per Lemma \ref{lem:non-leaky-combination},
$\mathbf{c}'$ must be the linear combination of eigenvectors. All
of these must correspond to the same eigenvalue $\lambda$ as otherwise,
$\mathbf{v}$ would not be in the kernel of $\left(B-\lambda I\right)$.
We have proved that both $\mathbf{c}'$ and $\mathbf{c}''$ are eigenvectors
of $\lambda$, and thus $\mathbf{c}$ is as well and $\mathbf{v}$
was the zero vector all along.
\end{proof}
We have proved that the only numbers on the unit circle that may be
NB-eigenvalues are the roots of unity, and when they are, they are
never defective. We proceed to compute the exact multiplicity of the
complex roots of unity and real roots of unity in turn.

\subsubsection{\label{sec:complex-roots}Complex roots of unity}

Theorem \ref{thm:root} shows that, in graphs with no nodes of degree
$2$, only $\pm1$ may be unit NB-eigenvalues. In graphs that do have
complex roots of unity, we have the following characterization. In
this section, we fix a nonzero $\lambda$ and let $B\mathbf{v}=\lambda\mathbf{v}$
with $\lambda^{r}=1$ but $\lambda^{2}\neq1$.
\begin{prop}
$\mathbf{v}$ can we written as $\mathbf{v}=\sum_{i=1}^{t}\mathbf{c}^{i}$,
where each $\mathbf{c}^{i}$ is an eigenvector supported on a different
cycle.
\end{prop}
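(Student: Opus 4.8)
The plan is to realize $\mathbf{v}$, restricted to its support, as a ``circulation'' whose conservation laws come from non-leakiness, and then to use the fact that circulations split into flows around single cycles. First I would record the local structure of $\mathbf{v}$. Since $\lambda$ is a root of unity, Theorem~\ref{thm:root} gives that $\mathbf{v}$ is non-leaky, so $\nbcent=0$ at every node $k$ with $d_{k}>2$; combined with (\ref{eqn:eigenvector-in-two-directions}) this yields the edge relation $\mathbf{v}_{l\to k}=-\lambda\,\mathbf{v}_{k\to l}$ at every such \emph{junction} node, while at every degree-$2$ node with neighbors $i,l$ the two non-backtracking channels simply propagate, $\mathbf{v}_{k\to l}=\lambda^{-1}\mathbf{v}_{i\to k}$. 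Let $H$ be the subgraph carrying the support of $\mathbf{v}$. These relations force every degree-$2$ node of $G$ lying in $H$ to have degree exactly $2$ in $H$, and every junction to have at least two incident support edges (a junction with a single active edge would violate $\nbcent=0$). Hence $H$ has minimum degree at least $2$ and therefore contains cycles.

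Next I would contract each maximal path of degree-$2$ nodes to a single edge, obtaining a multigraph $\tilde H$ on the junctions. On each such segment the forward and backward values are rigidly linked: propagating through the interior and matching the junction relation at both ends gives $b=-\lambda^{m}a$, where $a,b$ are the two end-values and $m$ is the segment length, so each segment carries a single complex degree of freedom. Writing $a_{e}$ for the value $\mathbf{v}_{u\to s_{e}}$ leaving a junction $u$ along segment $e$, the junction relation gives incoming value $-\lambda a_{e}$, and so the non-leakiness condition $\nbcent[u]=0$ becomes exactly a conservation law $\sum_{e\ni u}a_{e}=0$ at every junction. In other words $e\mapsto a_{e}$ is a circulation on $\tilde H$ whose edge gains are the phases $\lambda^{m_{e}}$. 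The key structural fact is then that the space of such circulations is spanned by the elementary circulations supported on single cycles of $\tilde H$: choosing any cycle in the support of the circulation, the values of $\mathbf{v}$ there reconstruct, via the propagation and junction relations, a vector $\mathbf{c}$ supported on the corresponding single cycle $\mathcal{C}$ of $G$; by construction $\mathbf{c}$ is non-leaky, so by Example~\ref{exm:cycle-support} it is an eigenvector of eigenvalue $\lambda$ with $\lambda^{|\mathcal{C}|}=1$. Subtracting the scalar multiple of $\mathbf{c}$ that cancels $\mathbf{v}$ on one segment produces a non-leaky eigenvector of the same eigenvalue with strictly smaller support, and induction on the size of the support finishes the decomposition $\mathbf{v}=\sum_{i}\mathbf{c}^{i}$.

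The hard part will be the ``key structural fact'' in the second paragraph, namely controlling the phase factors $\lambda^{m_{e}}$ and showing a nonzero circulation is always supported on at least one single cycle around which a genuine eigenvector lives. Concretely, I must verify that a cycle can carry nonzero flow only when it is \emph{balanced}, i.e. the product of its gains is trivial, which amounts to $\lambda^{L}=1$ for a cycle of length $L$; this is precisely the condition under which Example~\ref{exm:cycle-support} produces an eigenvector on that cycle, and a short bridge-type computation shows that unbalanced segments are forced to carry zero flow. Care is needed with the signs introduced by the relation $\mathbf{v}_{l\to k}=-\lambda\,\mathbf{v}_{k\to l}$ when composing gains around a cycle, and with the degenerate case where a cycle meets a junction only once (a self-loop of $\tilde H$), which corresponds to two cycles of $G$ sharing a single junction as in a bracelet.
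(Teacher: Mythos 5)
Your reduction of non-leaky eigenvectors to circulations with gains is set up correctly, and it is, in presentation, genuinely different from the paper's proof (which iteratively peels off a cycle through a degree-$2$ node of the support, reusing the construction in case 2 of Theorem \ref{thm:root}). However, the step you defer as ``the hard part'' is not a deferrable verification: it is false, and it is exactly where both your argument and the paper's break. First, a correction to your setup: matching the junction relation at \emph{both} endpoints of a segment of length $m$ gives $b=-\lambda^{m}a$ \emph{and} $a=-\lambda^{m}b$, so any active segment must satisfy $\lambda^{2m}=1$, i.e. $\lambda^{m}=\pm1$; your circulation is therefore a flow on a \emph{signed} multigraph. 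The flow space of a signed graph is not spanned by flows on single balanced cycles --- it also contains ``barbell'' flows, supported on two unbalanced cycles (those with $\lambda^{L}=-1$) joined by a path. An unbalanced cycle can carry nonzero flow provided its imbalance, $2f$, drains into the connecting path, so your claim that unbalanced pieces are forced to carry zero flow is precisely what barbells violate.

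Concretely: let $G$ be two hexagons joined at junctions $u,w$ (each of degree $3$) by a path of length $2$, and let $\lambda=i$, which is in scope since $\lambda^{4}=1$ and $\lambda^{2}\neq1$. Set $\mathbf{v}_{u\to x}=f$ for both neighbors $x$ of $u$ inside its hexagon and propagate by $\lambda^{-1}$ along each of the two directed walks around the hexagon; set $\mathbf{v}=-2f$ on the path edges leaving $u$ and leaving $w$, propagating likewise; repeat at $w$. Since $\lambda^{6}=-1$, the two hexagon branches at each junction are mutually consistent, and since $\lambda^{-5}=\lambda^{-1}$ the incoming sum at each junction is $2\lambda^{-1}f-2\lambda^{-1}f=0$; all eigen-equations then check out, so this is an eigenvector of eigenvalue $i$ supported on all of $G$. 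But the only cycles of $G$ are the two hexagons and $i^{6}\neq1$, so by Example \ref{exm:cycle-support} no eigenvector of eigenvalue $i$ is supported on a single cycle, and the asserted decomposition of $\mathbf{v}$ cannot exist. This is the phenomenon acknowledged in the errata attached to Figure \ref{fig:crab-squid}: two triangles with $\lambda^{3}=-1$, coupled through the rest of the graph, generate sixth roots of unity without any collar or bracelet of size $6$. The paper's own proof founders on the same rock --- propagating around a cycle through a degree-$2$ node closes up consistently only when $\lambda^{r}=1$ on that cycle, which fails here --- so your framework misses nothing that the paper supplies; rather, it makes visible that any correct version of this proposition must either add a hypothesis excluding barbells or enlarge the allowed supports to include barbell-type motifs.
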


\begin{proof}
With the notations used in Theorem \ref{thm:root}, put $\mathbf{v}^{0}\coloneqq\mathbf{v}$
and $\mathbf{c}^{1}\coloneqq\mathbf{c}$. Define $\mathbf{v}^{1}\coloneqq\mathbf{v}^{0}-\mathbf{c}^{1}$.
Since both $\mathbf{v}^{0}$ and $\mathbf{c}^{1}$ are non-leaky eigenvectors,
$\mathbf{v}^{1}$ is a non-leaky eigenvector as well. Let $\mathcal{D}_{1}$
be the support of $\mathbf{v}^{1}$. By construction, we have $\mathbf{v}_{i\to k}^{1}=\mathbf{v}_{k\to l}^{1}=0$
and therefore $k\notin\mathcal{D}_{1}$. But since $\lambda^{2}\neq1$,
there must be a $k_{2}\in\mathcal{D}_{1}$ with degree $2$. Thus
we can construct another $\mathbf{c}^{2}$ supported on a cycle containing
$k_{2}$ and define $\mathbf{v}^{2}\coloneqq\mathbf{v}^{1}-\mathbf{c}^{2}$.
Note that the support of $\mathbf{v}^{2}$ is a proper subset of $\mathcal{D}_{1}$
as it does not contain $k_{2}$. We can iterate this construction
for $t$ steps until support $\mathbf{v}^{t}$ is supported on a single
cycle, i.e. until $\mathbf{v}^{t}=\mathbf{c}^{t}$.
\end{proof}
\begin{prop}
Let $\mathcal{C}$ be a set of $r$ nodes that induce either a cycle
or a figure eight graph, and suppose $\lambda^{r}=1$ but $\lambda^{2}\neq1$.
Assume there exists an eigenvector supported on the edges in the graph
induced by $\mathcal{C}$. Then there is only one such eigenvector,
up to a scalar.
\end{prop}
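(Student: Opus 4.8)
The plan is to show that the space of eigenvectors with eigenvalue $\lambda$ supported on the edges induced by $\mathcal{C}$ is one-dimensional, by a transfer/propagation argument along each constituent cycle together with a careful count of the constraints imposed at the nodes of degree greater than $2$. Throughout, since $\lambda^{r}=1$ the eigenvalue is a root of unity, so by Theorem~\ref{thm:root} every such eigenvector $\mathbf{v}$ is non-leaky; moreover $\mathbf{v}$ vanishes on every oriented edge not lying in $\mathcal{C}$.

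First I would set up coordinates on a single cycle. Label its nodes $1,\dots,s$ cyclically and write $c_{j}:=\mathbf{v}_{j\to j+1}$ and $d_{j}:=\mathbf{v}_{j\to j-1}$ for the clockwise and counter-clockwise values. Evaluating the eigenvector equation (\ref{eqn:eigenvector-in-two-directions}) at the internal (cycle) oriented edges, and using that the only nonzero incoming values at any node come from its two cycle-neighbors, one finds $c_{j-1}=\lambda c_{j}$ and $d_{j+1}=\lambda d_{j}$ for \emph{every} $j$ (the external incoming values being zero). Thus each direction is a geometric progression in $\lambda$ pinned down by a single scalar, the once-around consistency holds automatically because $\lambda^{r}=1$, and a single cycle therefore contributes at most two free parameters $(\alpha,\beta)=(c_{1},d_{1})$.

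Next I would bring in the nodes of degree greater than $2$, where the two directions become coupled. At such a node $k$, non-leakiness forces $\nbcent[k]=0$ (equivalently, the eigenvector equation read on a zero-valued edge leaving $\mathcal{C}$), and since $\nbcent[k]$ equals the sum of the incoming cycle-values this is a single linear relation between $\alpha$ and $\beta$. For a pendant (one high-degree node) this relation collapses the two parameters to one. For a collar (two antipodal high-degree nodes) there are two such relations; I would verify that $\lambda^{r}=1$ forces them to coincide — were they independent they would force $\mathbf{v}=\mathbf{0}$, contradicting the assumed existence — so again exactly one parameter survives. The standing hypothesis $\lambda^{2}\neq1$ is what places us, via Theorem~\ref{thm:root}, in the regime where the support is a genuine cycle carrying the degree-$2$ arcs on which the propagation above is valid.

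The main obstacle is the figure-eight (bracelet) case, where an NB-walk may switch cycles at the join node $v_{0}$, so the local equations there couple all four scalars, two from each cycle. Here the plan is to exploit that $\nbcent[v_{0}]=0$ (again from non-leakiness, since $d_{v_{0}}>2$) turns the eigenvector equation at each edge leaving $v_{0}$ into a reflection relation $\lambda\,\mathbf{v}_{v_{0}\to x}=-\mathbf{v}_{x\to v_{0}}$; these relations, together with the two geometric progressions, reduce the four scalars to two, after which the single remaining instance of $\nbcent[v_{0}]=0$ cuts the count to one. Verifying that the system at $v_{0}$ is neither over- nor under-determined is the crux: the hypothesis $\lambda^{2}\neq1$ — and, for the bracelet, $\lambda^{r/2}\neq1$, which is exactly what distinguishes a genuine figure-eight eigenvector from one that splits across the two cycles by the decomposition proposition — is precisely what keeps the remaining condition nontrivial and makes the final count equal to one.
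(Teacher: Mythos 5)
Your proposal is correct and rests on the same mechanism as the paper's proof: eigenvector entries propagate geometrically (a factor of $\lambda$ per step) around each cycle, non-leakiness at a node of degree greater than $2$ supplies a linear constraint coupling the two directions, and a dimension count combined with the assumed existence yields uniqueness. The difference is in the bookkeeping, and it is instructive. The paper does a single uniform count: $2r$ unknowns, $2r-2$ independent propagation equations, plus \emph{one} non-leakiness equation at a single high-degree node (which exists without loss of generality), giving a solution space of dimension at most $1$; existence then finishes. It never needs to examine a collar's second high-degree node or a bracelet's join node, because one nontrivial constraint already yields the upper bound. You instead solve the system in full, case by case, which forces you to check consistency of the additional constraints. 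For the collar your dispatch via the existence hypothesis (two independent constraints would force $\mathbf{v}=0$) is valid and equivalent to the paper's count. For the bracelet, however, you correctly discover that the residual constraint at the join node is nontrivial only when $\lambda^{r/2}\neq 1$ --- and you should be aware that this is \emph{not} among the proposition's hypotheses. If $\lambda^{r/2}=1$ (with $r/2\geq 3$, so $\lambda^{2}\neq 1$ still holds), the two loops decouple and the space of eigenvectors supported on the figure eight is two-dimensional, so the statement as written fails under the reading in which the figure eight carries $r$ edges; your appeal to the decomposition proposition is the right repair, but it is a repair, not a consequence of the stated hypotheses. The paper's own proof sidesteps this only because its node labeling and its count of $2r$ coordinates are written for the cycle case and never genuinely treat the figure eight; the errata accompanying Figure \ref{fig:crab-squid} is symptomatic of exactly this edge case. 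In short: same approach, leaner execution in the paper, but your longer route surfaces a real subtlety in the bracelet case that the paper glosses over.
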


\begin{proof}
Let $B\mathbf{v}=\lambda\mathbf{v}$, where $\mathbf{v}$ is nonzero
within the graph induced by $\mathcal{C}$ and zero outside of it,
and thus $\mathbf{v}$ has $2r$ nonzero coordinates. It is sufficient
to show that the condition of being a non-leaky eigenvector supported
on $\mathcal{C}$ determines a system of $2r-1$ equations. Label
the nodes of $\mathcal{C}$ by $0,2,\ldots,r-1$ such that the node
$i$ is adjacent to the nodes labeled $i-1$ and $i+1$; here labels
are taken $\mod r$. Since $\mathbf{v}$ is an eigenvector, we have
$\mathbf{v}_{i\to(i+1)}=\lambda\mathbf{v}_{(i-1)\to i}$ and $\mathbf{v}_{\left(i+1\right)\to i}=\lambda\mathbf{v}_{i\to\left(i-1\right)}$
for each $i$; this gives $2r-2$ independent equations. Without loss
of generality we may assume that the node with label $0$ has degree
greater than $2$. Since $\mathbf{v}$ does not leak through the node
with label $0$, we have $0=\nbcent[0]=\mathbf{v}_{r-1\to0}+\mathbf{v}_{1\to0}$,
which is an equation independent of the others, for a total of $2r-1$
equations, completing the proof. (Note that in a cycle graph, the
condition of non-leakiness is trivial as all nodes have degree $2$.
In that case, we only have a system with $2r-2$ equations, whence
the geometric multiplicity of $\lambda$ is $2$; cf. Section \ref{sec:1-cycle}.) 
\end{proof}
\begin{prop}
If $r$ is odd, $\mathcal{C}$ must be a pendant. If $r$ is even,
$\mathcal{C}$ must be a collar or a bracelet.
\end{prop}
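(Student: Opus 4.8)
The plan is to put a coordinate on the cycle, read off the eigenvector explicitly, and then let the non-leaky condition at the high-degree nodes dictate where those nodes can sit. Label the nodes of the cycle $0,1,\dots,r-1$ cyclically, and write $x_i\coloneqq\mathbf v_{i\to(i+1)}$ and $y_i\coloneqq\mathbf v_{i\to(i-1)}$ for the two oriented edges leaving node $i$ along the cycle. Substituting $k=i,\ l=i\pm1$ into Equation~(\ref{eqn:eigenvector-in-two-directions}) and using that $\mathbf v$ is supported on $\mathcal C$ (so the only incoming contributions at $i$ are $x_{i-1}$ and $y_{i+1}$) collapses the eigenvalue equation to the two recurrences $\lambda x_i=x_{i-1}$ and $\lambda y_i=y_{i+1}$. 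Hence $x_i=\lambda^{-i}x_0$ and $y_i=\lambda^{i}y_0$, and going once around the cycle recovers $\lambda^{r}=1$. Because $\mathbf v$ is supported on \emph{all} of $\mathcal C$, both $x_0$ and $y_0$ are nonzero.

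Next I would extract the only additional constraint, which comes from the high-degree nodes. At a node $k$ with $d_k>2$ the eigenvector must not leak, so by Definition~\ref{def:leaky} we need $\nbcent[k]=0$; since $\nbcent[k]=x_{k-1}+y_{k+1}$ this reads $\lambda^{-(k-1)}x_0+\lambda^{k+1}y_0=0$, i.e. $\lambda^{2k}=-x_0/y_0$. The right-hand side is a fixed nonzero constant, so any two high-degree nodes $k,k'$ of $\mathcal C$ satisfy $\lambda^{2(k-k')}=1$; equivalently, their positions are congruent modulo the order of $\lambda^{2}$. The parity of $r$ then does all the work through this order. When $r$ is odd and $\lambda$ is a primitive $r$-th root, $\lambda^{2}$ again has order $r$, so $\lambda^{2(k-k')}=1$ forces $k\equiv k'\pmod r$; thus $\mathcal C$ has at most one high-degree node and is a pendant. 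When $r$ is even, $\lambda^{2}$ has order $r/2$, so the admissible positions are exactly $\{p,\,p+r/2\}$: at most two high-degree nodes, and if there are two they are diametrically opposite, which is precisely a collar.

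It remains to treat the figure-eight case and to pin down the real obstacle. For a figure eight the shared vertex is the unique node of degree $\ge 4$ inside $\mathcal C$, hence the only place $\mathbf v$ can leak, and I would run the winding computation separately on each lobe, obtaining $\lambda^{\ell_1}=\lambda^{\ell_2}=1$ for the two lobe lengths together with the single leak condition at the shared node. The target is to conclude $\ell_1=\ell_2$, so that $\mathcal C$ is a bracelet—the degenerate collar obtained by identifying the two antipodal special nodes of an even cycle, which is also what makes this case carry even size.

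This last step is exactly where the argument is delicate, and I expect it, rather than any calculation, to be the main obstacle. Winding alone only forces each $\ell_i$ to be a multiple of the order of $\lambda$, not that the two lobes are equal; and on the cycle side the analogous gap appears when $\lambda$ is \emph{not} a primitive $r$-th root. If $\lambda$ has order a proper divisor $d\mid r$ with $d<r$, the congruence $\lambda^{2(k-k')}=1$ permits several equally spaced high-degree nodes (for instance three of them on a $9$-cycle for a primitive cube root), which is not a pendant. These are precisely the edge cases flagged in the errata to Figure~\ref{fig:crab-squid}; a fully rigorous proof must therefore either restrict to primitive $\lambda$ (equivalently, take $r$ to be the order of $\lambda$) or argue separately that non-primitive and unequal-lobe configurations cannot arise as minimal supports.
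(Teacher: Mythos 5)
Your treatment of the cycle case is essentially the paper's own proof in a different normalization. The paper fixes the scale of $\mathbf{v}$, uses non-leakiness at the node labeled $0$ to determine the reverse-direction coordinates, and then inspects $\nbcent[i]$ at each node, concluding it can vanish only at $i=0$ when $r$ is odd and only at $i\in\{0,r/2\}$ when $r$ is even; your congruence $\lambda^{2(k-k')}=1$ is exactly the same dichotomy, reached by keeping $x_0,y_0$ free instead of normalizing. Your primitivity caveat is also genuine and not pedantry: the step ``$\nbcent[i]$ is zero only when $i=0$'' is valid only when $\lambda$ has order exactly $r$, and if $\operatorname{ord}(\lambda)=d$ properly divides $r$ then equally spaced high-degree positions at multiples of $d$ (your $9$-cycle, cube-root example) satisfy all the constraints. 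This is precisely the kind of configuration behind the errata attached to Figure~\ref{fig:crab-squid}, so on this point you are being more careful than the paper; the proposition should be read with $r$ equal to the order of $\lambda$.

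The one genuine gap in your proposal is the figure-eight case, which you leave open --- and the relation your plan starts from is not the right one, so the plan as stated would stall. Winding a lobe does not produce $\lambda^{\ell_i}=1$. Label one lobe $c,a_1,\dots,a_{\ell_1-1}$, where $c$ is the shared node. Since $d_c\geq 4$, non-leakiness forces $\nbcent[c]=0$, and substituting this into the eigenvector equation for any support edge $c\to w$ gives $\lambda\mathbf{v}_{c\to w}=-\mathbf{v}_{w\to c}$ --- note the sign. Winding the lobe forward gives $\mathbf{v}_{a_{\ell_1-1}\to c}=\lambda^{-(\ell_1-1)}\mathbf{v}_{c\to a_1}$, and applying the sign relation at both ends of the lobe yields $\mathbf{v}_{c\to a_1}=\lambda^{2\ell_1}\mathbf{v}_{c\to a_1}$, i.e.\ $\lambda^{2\ell_1}=1$, and likewise $\lambda^{2\ell_2}=1$. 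Under the same primitivity hypothesis you already invoke for the cycle case, namely $\operatorname{ord}(\lambda)=r=\ell_1+\ell_2$, the conditions $r\mid 2\ell_i$ and $0<\ell_i<r$ force $2\ell_i=r$, hence $\ell_1=\ell_2=r/2$: the two lobes are equal, $r$ is automatically even, and $\mathcal{C}$ is a bracelet. So your ``delicate step'' closes with the machinery you already set up; the factor of $2$ coming from the minus sign is what does it. The paper disposes of this case only implicitly, by viewing the figure eight as the closed NB-walk of length $r$ in which the positions $0$ and $r/2$ have been identified, so your instinct that it deserves a separate argument is sound --- it just also deserves a correct one.
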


\begin{proof}
Suppose the node with label $0$ has degree larger than $2$ and suppose
$\mathbf{v}_{r-1\to0}=1$, which fixes all other coordinates to be
$\mathbf{v}_{i\to\left(i+1\right)}=\lambda^{-i}$ and $\mathbf{v}_{\left(i+1\right)\to i}=-\lambda^{r+1-i}$.
It suffices to inspect $\nbcent[i]=\mathbf{v}_{\left(i-1\right)\to i}+\mathbf{v}_{\left(i+1\right)\to i}=\lambda^{-\left(i-1\right)}-\lambda^{r+1-i}$
for each $i$; if $\nbcent[i]\neq0$ then $i$ must have degree $2$.
The properties of sums of roots of unity are well-known. In particular,
if $r$ is odd, $\nbcent[i]$ is zero only when $i=0$. In other words,
only the node with label $0$ can have degree greater than $2$, which
means that $\mathcal{C}$ is a pendant. If $r$ is even, $\nbcent[i]$
is zero only when $i=0$ or $i=r/2$. In this case, and if $\mathcal{C}$
induces a cycle, then it is a collar; if it induces a figure eight
graph (and the nodes $0$ and $r/2$ are actually the same), it is
a bracelet.
\end{proof}
\begin{cor}
\label{cor:eigenspace-basis}$GM(\lambda)$ equals the number of pendants
or collars or bracelets of length $r$. Equivalently, the eigenspace
corresponding to $\lambda$ has a basis $\{\mathbf{c}^{i}\}_{i=1}^{t}$
such that the support of each $\mathbf{c}^{i}$ is a pendant or a
collar or a bracelet. \qed
\end{cor}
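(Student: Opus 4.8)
The plan is to read the statement off the three preceding propositions, the only genuine work being a linear-independence check at the very end. Write $N$ for the number of subgraphs of $G$ of size $r$ that are pendants (when $r$ is odd) or collars or bracelets (when $r$ is even), and for each such motif $\mathcal{C}$ let $\mathbf{c}_{\mathcal{C}}$ denote a choice of eigenvector supported on $\mathcal{C}$. The uniqueness Proposition guarantees that $\mathbf{c}_{\mathcal{C}}$ exists and is determined up to a scalar, so this collection has exactly $N$ members, and the whole task reduces to showing that $\{\mathbf{c}_{\mathcal{C}}\}$ is a basis of the eigenspace of $\lambda$.

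First I would establish that $\{\mathbf{c}_{\mathcal{C}}\}$ spans the eigenspace. Given any $\mathbf{v}$ with $B\mathbf{v}=\lambda\mathbf{v}$, the decomposition Proposition writes $\mathbf{v}=\sum_{i=1}^{t}\mathbf{c}^{i}$ with each $\mathbf{c}^{i}$ a non-leaky eigenvector (non-leaky by Theorem \ref{thm:root}) supported on a single cycle. The classification Proposition then forces each supporting cycle, or figure eight, to be a pendant, collar, or bracelet of size $r$, that is, one of the $N$ motifs, and the uniqueness Proposition identifies each $\mathbf{c}^{i}$ with a scalar multiple of the corresponding $\mathbf{c}_{\mathcal{C}}$. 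Hence every eigenvector is a linear combination of the $\mathbf{c}_{\mathcal{C}}$, which already gives $GM(\lambda)\le N$ and realizes the second, ``equivalently'' formulation once independence is known.

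It remains to prove that the $N$ vectors $\mathbf{c}_{\mathcal{C}}$ are linearly independent, upgrading the spanning statement to $GM(\lambda)=N$; this is the only step that is not a direct citation, and I expect it to be the main obstacle, precisely because distinct motifs may overlap, as the two overlapping collars of Figure \ref{fig:karate}(b) show. The structural fact I would exploit is that every non-special node of a motif has degree $2$ in $G$, so if two motifs share such a node they must share its two incident edges, and hence the entire maximal degree-$2$ arc through it; two motifs can therefore diverge only at their (at most two) special nodes. I would take a minimal dependence $\sum_{j}\alpha_{j}\mathbf{c}_{\mathcal{C}^{j}}=0$ with all $\alpha_{j}\neq 0$ and locate a directed edge lying in exactly one $\mathcal{C}^{j}$ of the family, obtained by following, from a special node at which two motifs of the family first differ, the arc that leaves into the part of $G$ covered by a single motif; since $\mathbf{c}_{\mathcal{C}^{j}}$ is nonzero on the edges of $\mathcal{C}^{j}$ (its values propagate by $\mathbf{v}_{i\to(i+1)}=\lambda\mathbf{v}_{(i-1)\to i}$), evaluating the dependence on that edge forces $\alpha_{j}=0$, contradicting minimality. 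The delicate point, and the crux of the whole corollary, is guaranteeing that such a ``private'' edge always exists: the degree-$2$ constraint on non-special nodes is exactly what excludes $K_{4}$-type configurations, whose triangles have no private edge but are also not genuine motifs, so it is here that the hypothesis that $\mathcal{C}$ is a bona fide pendant, collar, or bracelet, as opposed to an arbitrary cycle supporting an eigenvector in the sense of Example \ref{exm:cycle-support}, does the essential work.
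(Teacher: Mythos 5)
Your first paragraph is precisely the paper's intended proof: the decomposition proposition expresses any eigenvector of $\lambda$ as a sum of eigenvectors each supported on a single cycle (or figure eight), the classification proposition forces each such support to be a pendant, collar, or bracelet of size $r$, and the uniqueness proposition identifies each summand with a scalar multiple of the corresponding $\mathbf{c}_{\mathcal{C}}$. Hence the motif-supported eigenvectors span the eigenspace, and a maximal linearly independent subfamily of them is a basis each of whose members is supported on a motif --- which is the ``equivalently'' clause of the corollary and all that its \qed{} is meant to encode. You should have stopped there.

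The independence step you then attempt is a genuine error: the claim that the full family $\{\mathbf{c}_{\mathcal{C}}\}$ is linearly independent, and hence that $GM(\lambda)$ equals the raw count $N$ of motifs, is false, and the counterexample is the very graph you cite as motivation. In Figure \ref{fig:karate}(b) the two hubs are joined by three internally disjoint two-edge paths through degree-$2$ nodes $a,b,c$, giving three collars of size $4$ (one per pair of paths), yet $GM(i)=2$. Concretely, writing $\mathbf{u}_{x}$ for the vector equal to $1$ on the two hub-to-$x$ edges and $-i$ on the two $x$-to-hub edges, the collar eigenvectors are $\mathbf{c}_{ab}=\mathbf{u}_{a}-\mathbf{u}_{b}$, $\mathbf{c}_{ac}=\mathbf{u}_{a}-\mathbf{u}_{c}$, $\mathbf{c}_{bc}=\mathbf{u}_{b}-\mathbf{u}_{c}$, and they satisfy the dependence $\mathbf{c}_{ab}-\mathbf{c}_{ac}+\mathbf{c}_{bc}=0$. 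Your ``private edge'' lemma fails here in exactly the way you feared: every edge of this graph lies in exactly two of the three collars, so no member of the dependent family owns an edge of its own, and the degree-$2$ condition on non-special nodes does nothing to exclude this configuration --- all three motifs are bona fide collars. The first sentence of the corollary must therefore be read, as the discussion in Section \ref{sec:examples-unit} makes explicit, as counting \emph{linearly independent} motif-supported eigenvectors rather than motifs themselves; your attempt to prove the literal raw-count statement cannot be repaired, because that statement is false.
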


\subsubsection{\label{sec:real-roots}Real roots of unity}

We proceed to find the algebraic and geometric multiplicity of $\lambda=\pm1$.
Theorem (\ref{thm:unit-multiplicity}) establishes that these quantities
are equal, though we present different proofs for each. The proofs
for computing the algebraic multiplicity are related to the Ihara-Bass
formula of Equation (\ref{eqn:ihara-bass}), while the proofs for
computing the geometric multiplicities yield a basis for the corresponding
eigenspace similar to that exhibited for complex roots of unity in
Corollary \ref{cor:eigenspace-basis}; see Corollary \ref{cor:eigenspace-basis-real}.

A graph $G$ has at least two cycles if and only if it has more edges
than nodes: $m>n$. In this case, the Ihara-Bass formula (\ref{eqn:ihara-bass})
immediately implies that $AM(\pm1)\geq m-n$. For this Section, recall
that $D-A$ is called the \emph{Laplacian }matrix of $G$, which is
always singular, and whose rank is $n-1$ if and only if the graph
is connected. An argument closely related to the following proof can
be found in \cite{northshield1998,Grindrod2018}, though we have arrived
at it independently.
\begin{prop}
\label{pro:am-plus1}Let $G$ have at least two cycles, i.e. $m>n$.
Then $AM(1)=m-n+1$.
\end{prop}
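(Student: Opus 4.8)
The plan is to read off $AM(1)$ directly from the Ihara-Bass factorization (\ref{eqn:ihara-bass}). Since $AM(1)$ equals the multiplicity of $t=1/\lambda=1$ as a root of $\det(I-tB)$, and since $\det(I-tB)=(1-t^{2})^{m-n}\,h(t)$ with $h(t)\coloneqq\det\left(I-tA+t^{2}(D-I)\right)$, the multiplicity at $t=1$ splits as the sum of the contributions of the two factors. The factor $(1-t^{2})^{m-n}=(1-t)^{m-n}(1+t)^{m-n}$ contributes exactly $m-n$. Hence it suffices to show that $t=1$ is a \emph{simple} root of $h(t)$, which would give $AM(1)=(m-n)+1$.

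Writing $M(t)\coloneqq I-tA+t^{2}(D-I)$, I first note that $M(1)=D-A=L$ is the Laplacian, which is singular because $G$ is connected; thus $h(1)=\det L=0$ and $t=1$ is indeed a root. To see that the root is simple I would compute $h'(1)$ via Jacobi's formula $h'(t)=\Tr\!\left(\adj(M(t))\,M'(t)\right)$ and verify it is nonzero. Here $M'(t)=-A+2t(D-I)$, so $M'(1)=2(D-I)-A$.

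The key input is the structure of $\adj(L)$. Since $G$ is connected, $L$ has rank $n-1$, so $\adj(L)$ has rank one; as $L\mathbf{1}=\mathbf{1}^{T}L=0$, both its columns and rows lie in the kernel of $L$, which is spanned by $\mathbf{1}$, so $\adj(L)=\tau(G)\,\mathbf{1}\mathbf{1}^{T}$, where $\tau(G)>0$ is the number of spanning trees (by the Matrix-Tree theorem, every cofactor of $L$ equals $\tau(G)$). Then
\[
h'(1)=\Tr\!\left(\tau(G)\,\mathbf{1}\mathbf{1}^{T}\bigl(2(D-I)-A\bigr)\right)=\tau(G)\,\mathbf{1}^{T}\bigl(2D-2I-A\bigr)\mathbf{1}.
\]
Evaluating the quadratic form using $\mathbf{1}^{T}D\mathbf{1}=\mathbf{1}^{T}A\mathbf{1}=2m$ and $\mathbf{1}^{T}I\mathbf{1}=n$ gives $\mathbf{1}^{T}(2D-2I-A)\mathbf{1}=2(m-n)$, so $h'(1)=2(m-n)\,\tau(G)$. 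This is nonzero precisely because $G$ is connected ($\tau(G)\geq1$) and has at least two cycles ($m>n$). Hence $t=1$ is a simple root of $h$, and $AM(1)=m-n+1$.

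The main obstacle is the nonvanishing of $h'(1)$; once $\adj(L)$ is identified as a nonzero multiple of $\mathbf{1}\mathbf{1}^{T}$, the rest is degree-sum bookkeeping. It is worth emphasizing that the hypothesis $m>n$ enters exactly at this step: if $m=n$ the factor $(1-t^{2})^{m-n}$ is trivial and $h'(1)=0$, so $t=1$ becomes a multiple root of $h$, consistent with the single-cycle computation in Section \ref{sec:1-cycle} where $\lambda=1$ has $AM=2$.
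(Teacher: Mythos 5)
Your proposal is correct and follows essentially the same route as the paper: read off the factor $(1-t^{2})^{m-n}$ from Ihara--Bass, then show $t=1$ is a simple root of $h(t)=\det\left(I-tA+t^{2}(D-I)\right)$ by applying Jacobi's formula together with the rank-one structure $\adj(D-A)=\eta\,\mathbf{1}\mathbf{1}^{T}$, arriving at the same value $2(m-n)\eta\neq0$. The only (welcome) refinement is that you pin down $\eta$ as the spanning-tree count $\tau(G)$ via the Matrix-Tree theorem, where the paper leaves it as an unspecified nonzero constant with a citation.
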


\begin{proof}
Define $f(u)\coloneqq\det\left(I-uA+u^{2}(D-I)\right)$ and observe
that $f(1)=\det\left(D-A\right)=0$. Therefore, $f(u)=(u-1)g(u)$
and the Ihara-Bass formula (\ref{eqn:ihara-bass}) implies $AM(1)\geq m-n+1$.
Showing $g(1)\neq0$ finishes the proof. First, note that $g(1)$
equals $f'(1)$. The so-called Jacobi formula shows $f'(1)=\Tr\left(\adj\left(D-A\right)\left(D-A+D-2I\right)\right)$
(see \cite{petersen2012matrix}, Equation (41)). Further, well-known
properties of the adjugate show that $\adj\left(D-A\right) = \eta \mathbf{1}\mathbf{1}^{T}$ for some nonzero $\eta$
(see \cite{horn2012matrix}, Section 0.8.2 and \cite{merris1994laplacian}). All together, we have
\begin{alignat}{1}
g(1)=f'(1) & =\Tr\left(\adj\left(D-A\right)\left(D-A+D-2I\right)\right)\label{eqn:f-prime-one}\\
 & =\eta \Tr\left(\mathbf{1}\mathbf{1}^{T}\left(D-A\right)+\mathbf{11}^{T}\left(D-2I\right)\right)\nonumber \\
 & =\eta \mathbf{1}^{T}\left(D-A\right)\mathbf{1} + \eta \mathbf{1}^{T}\left(D-2I\right)\mathbf{1}\nonumber \\
 & = \eta \left( 2m-2n \right) \neq 0,
\end{alignat}
where the third line uses Lemma \ref{lem:trace-cyclic}.
\end{proof}
\begin{prop}
\label{pro:gm-plus1}Let $G$ have at least two cycles. Then $GM(1)=m-n+1$.
\end{prop}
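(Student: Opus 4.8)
The plan is to give a direct, constructive description of the eigenspace for $\lambda=1$ that exhibits an explicit basis, rather than merely combining Proposition \ref{pro:am-plus1} with Theorem \ref{thm:unit-multiplicity} (which already force $GM(1)=AM(1)=m-n+1$). The point of a separate argument is to produce the basis promised in Corollary \ref{cor:eigenspace-basis-real}.

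First I would specialize the eigenvector relation (\ref{eqn:eigenvector-in-two-directions}) to $\lambda=1$, obtaining $\mathbf{v}_{k\to l}+\mathbf{v}_{l\to k}=\nbcent$ for every pair of neighbors $k,l$. The right-hand side depends only on $k$, so the quantity $\mathbf{v}_{k\to l}+\mathbf{v}_{l\to k}$ is independent of the chosen neighbor $l$; exchanging the roles of $k$ and $l$ shows the common value at $k$ equals that at $l$. Since $G$ is connected, there is a single constant $c$ with $\mathbf{v}_{k\to l}+\mathbf{v}_{l\to k}=c$ on every edge and $\nbcent=c$ at every node, and conversely these two families of relations are equivalent to $B\mathbf{v}=\mathbf{v}$.

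The decisive step is to prove $c=0$. Summing the node relation $\nbcent=c$ over all $n$ nodes gives $nc$; but that same sum counts each oriented-edge value of $\mathbf{v}$ exactly once, so it also equals $\sum_{\{k,l\}}\left(\mathbf{v}_{k\to l}+\mathbf{v}_{l\to k}\right)=mc$ by the edge relation. Hence $(m-n)c=0$, and since $G$ has at least two cycles we have $m>n$, forcing $c=0$. I expect this counting argument to be the crux: it is precisely where the hypothesis $m>n$ enters and where the ``extra'' dimension beyond the trivial $m-n$ from the Ihara--Bass factor $(1-t^{2})^{m-n}$ gets pinned down.

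With $c=0$ the eigenvector equation reduces to antisymmetry, $\mathbf{v}_{k\to l}=-\mathbf{v}_{l\to k}$, together with the vanishing net inflow $\nbcent=\sum_{i}a_{ik}\mathbf{v}_{i\to k}=0$ at every node. Fixing a reference orientation on each edge identifies such antisymmetric functions with $\mathbb{C}^{m}$, and the node conditions say exactly that $\mathbf{v}$ is a divergence-free flow, i.e. that $\mathbf{v}$ lies in the kernel of the signed incidence (boundary) map of $G$. For a connected graph that map has rank $n-1$, so its kernel --- the \emph{cycle space} --- has dimension $m-n+1$, giving $GM(1)=m-n+1$. To match Corollary \ref{cor:eigenspace-basis-real}, I would close by noting that the fundamental cycles relative to a spanning tree furnish a basis: each yields a circulation supported on a single cycle, which one verifies directly --- at degree-$2$ nodes and at higher-degree nodes alike, where the off-cycle coordinates vanish --- is an eigenvector for $\lambda=1$.
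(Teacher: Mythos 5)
Your proof is correct, and its first half coincides with the paper's own argument: the paper also specializes (\ref{eqn:eigenvector-in-two-directions}) to $\lambda=1$ to obtain a single global constant $q$ (your $c$), and eliminates it with exactly the same double-counting step --- summing the edge relation gives $mq$, summing the node relation gives $nq$, and $m>n$ forces $q=0$ --- arriving at the same system (\ref{eqn:system-1}) of antisymmetry plus zero net inflow. Where you genuinely diverge is the dimension count. The paper proves the two inequalities separately: the upper bound $GM(1)\leq m-n+1$ is imported from Proposition \ref{pro:am-plus1} via $GM\leq AM$, and the lower bound comes from explicitly constructing, for each edge $u_{0}-v_{0}$ outside a spanning tree $T$, the signed indicator vector $\sum_{i}\chi^{u_{i}\to v_{i}}-\chi^{v_{i}\to u_{i}}$ of its fundamental cycle and checking that these $m-n+1$ solutions are linearly independent. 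You instead identify the solution set of the system exactly with the kernel of the signed incidence matrix --- the cycle space --- and invoke rank--nullity for a connected graph to get the dimension $m-n+1$ in one stroke. Your route buys self-containedness: the equality $GM(1)=m-n+1$ follows from classical linear algebra on the incidence matrix alone, with no dependence on the Ihara--Bass/Jacobi computation behind Proposition \ref{pro:am-plus1}, and it makes the Kirchhoff's-law corollary immediate. The paper's construction buys the explicit fundamental-cycle basis needed for Corollary \ref{cor:eigenspace-basis-real}, which you recover at the end anyway; for that final step one does need the converse implication (solutions of the system are eigenvectors), which you correctly asserted and which is a one-line verification from (\ref{eqn:nbm-action}).
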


\begin{proof}
Since $GM$ is bounded above by $AM$, we have $GM(1)\leq m-n+1$.
Thus we only need to show that there exists a set of $m-n+1$ linearly
independent vectors that satisfy $B\mathbf{v}=\mathbf{v}$. Inspection
of (\ref{eqn:eigenvector-in-two-directions}) when $\lambda=1$ shows
that there exists a global constant $q$ such that 
\[
\mathbf{v}_{k\to l}+\mathbf{v}_{l\to k}=q=\nbcent[l]
\]
for any neighboring nodes $k,l$. Summing the left equation for each
edge yields 
\[
\frac{1}{2}\sum_{k,l}a_{kl}\left(\mathbf{v}_{k\to l}+\mathbf{v}_{l\to k}\right)=mq,
\]
while summing the right equation for each of the $n$ nodes yields
\[
nq=\sum_{l}\nbcent[l].
\]
Note these two equations sum each of the coordinates of $\mathbf{v}$
exactly once; thus $nq=mq$ and $q=0$. We conclude that $\mathbf{v}$
satisfies the system 
\begin{equation}
\begin{cases}
\nbcent[l]=0 & \text{for each node }l,\\
\mathbf{v}_{k\to l}+\mathbf{v}_{l\to k}=0 & \text{for each edge }k-l.
\end{cases}\label{eqn:system-1}
\end{equation}
Now take a spanning tree $T$ of $G$ and an edge $u_{0}-v_{0}$ not
in $T$. The edge $u_{0}-v_{0}$ determines a unique NB-cycle $c$
all of whose edges are in $T$ except for $u_{0}-v_{0}$. Choose an
arbitrary orientation for the cycle, say $c=u_{0}\to v_{0},u_{1}\to v_{1},\ldots,u_{r}\to v_{r}=u_{0}$
and consider the vector $\mathbf{v}^{u_{0}\to v_{0}}\coloneqq\sum_{i=0}^{r}\chi^{u_{i}\to v_{i}}-\chi^{v_{i}\to u_{i}}$.
It can be manually checked that $\mathbf{v}$ satisfies (\ref{eqn:system-1}).
(See Figure \ref{fig:tree-circle}(b) for an example.) Now, for each
$\left(k-l\right)\notin T$, define $\mathbf{v}^{k\to l}$ similarly
to $\mathbf{v}^{u_{0}\to v_{0}}$ above. The set $\{\mathbf{v}^{k\to l}:\left(k-l\right)\notin T\}$
is linearly independent since each vector $\mathbf{v}^{k\to l}$ has
a non-zero entry at coordinate $k\to l$, and all other vectors are
zero at that coordinate. Since there are exactly $m-n+1$ edges not
in $T$, we have proved $GM(1)\geq m-n+1$. 
\end{proof}
\begin{cor}
$GM(1)$ is the number of linearly independent ways there are to assign
current flows to a graph in such a way that they satisfy Kirchoff's
law of circuits.
\end{cor}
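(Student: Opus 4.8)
The plan is to recognize that the linear system (\ref{eqn:system-1}), which characterizes the $\lambda=1$ eigenvectors, is---after a change of vocabulary---precisely the defining system of a current flow obeying Kirchoff's current law. First I would interpret the value $\mathbf{v}_{k\to l}$ as the current flowing along the edge $k-l$ in the direction from $k$ to $l$. Under this reading, the second line of (\ref{eqn:system-1}), namely $\mathbf{v}_{k\to l}+\mathbf{v}_{l\to k}=0$, says exactly that reversing the orientation of an edge negates its current; this is the antisymmetry built into the notion of a signed current, so it guarantees that $\mathbf{v}$ descends to a well-defined flow on the undirected edge set.

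Next I would identify the first line of (\ref{eqn:system-1}). Since $\nbcent[l]=\sum_{i}a_{il}\mathbf{v}_{i\to l}$, the condition $\nbcent[l]=0$ for every node $l$ asserts that the total current entering $l$ vanishes; together with the antisymmetry just noted, this is exactly conservation of current at every node, i.e. Kirchoff's law. Hence the map $\mathbf{v}\mapsto(\mathbf{v}_{k\to l})_{k-l}$ is a linear isomorphism from the $\lambda=1$ eigenspace onto the space of current flows satisfying Kirchoff's law, and so $GM(1)$ equals the dimension of that space, i.e. the maximal number of linearly independent such flows.

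To make the count explicit I would appeal to the construction already carried out in the proof of Proposition \ref{pro:gm-plus1}: fixing a spanning tree $T$, each chord $(k-l)\notin T$ determines a fundamental NB-cycle and hence a flow $\mathbf{v}^{k\to l}$, and these $m-n+1$ flows were shown there to be linearly independent. As the fundamental cycles of a spanning tree are the classical basis of the cycle space---the space of Kirchoff-compliant circulations---this identifies the dimension in question with the circuit rank $m-n+1$, matching the value of $GM(1)$ obtained earlier.

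I do not anticipate a serious obstacle, since the statement is a reinterpretation rather than a new computation; the only point requiring care is the bookkeeping of sign conventions---checking that the antisymmetry condition is exactly the orientation-reversal rule for currents and that $\nbcent[l]=0$ is genuinely ``net inflow equals zero'' rather than some other balance. Once these conventions are pinned down, the isomorphism between eigenvectors and flows is immediate and the corollary follows from Proposition \ref{pro:gm-plus1}.
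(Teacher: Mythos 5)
Your proposal is correct and matches the paper's proof: the paper's argument is precisely the reinterpretation of system (\ref{eqn:system-1}) as Kirchoff's law, with $\mathbf{v}_{k\to l}$ read as the current from $k$ to $l$. Your additional remarks on the spanning-tree basis and the circuit rank $m-n+1$ are just a restatement of Proposition \ref{pro:gm-plus1}, which the corollary already presupposes, so nothing new is needed.
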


\begin{proof}
If we interpret $G$ as an electrical circuit and the coordinate $\mathbf{v}_{k\to l}$
as the current flow in the direction of $k$ toward $l$, then (\ref{eqn:system-1})
is exactly equivalent to Kirchoff's law.
\end{proof}
For our treatment of $\lambda=-1$, recall that $D+A$ is called the
\emph{signless Laplacian} of $G$. The proofs of the two following
propositions are similar to those of Propositions \ref{pro:am-plus1}
and \ref{pro:gm-plus1}.
\begin{prop}
Let $G$ have at least two cycles. Then $AM(-1)=m-n+1$ if $G$ is
bipartite and $AM(-1)=m-n$ if $G$ is not bipartite.
\end{prop}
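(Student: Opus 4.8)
The plan is to mirror the proof of Proposition \ref{pro:am-plus1}, now evaluating the Ihara-Bass factorization at $t=-1$ rather than $t=1$. Recall that $AM(-1)$ equals the multiplicity of $t=1/(-1)=-1$ as a root of $\det(I-tB)$. In the Ihara-Bass formula (\ref{eqn:ihara-bass}), the prefactor $(1-t^2)^{m-n}$ has a simple zero at $t=-1$ (the factor $1+t$ vanishes while $1-t=2$), and hence contributes exactly $m-n$ to this multiplicity. Setting $f(u):=\det\left(I-uA+u^2(D-I)\right)$ as before, the remaining contribution is the order of vanishing of $f$ at $u=-1$, and $f(-1)=\det(D+A)$ is the determinant of the signless Laplacian.

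The dichotomy between bipartite and non-bipartite graphs enters through the well-known identity $\mathbf{x}^T(D+A)\mathbf{x}=\sum_{k-l}\left(\mathbf{x}_k+\mathbf{x}_l\right)^2$, summed over the edges of $G$. For connected $G$ this quadratic form is positive definite unless $G$ is bipartite, in which case its kernel is one-dimensional and spanned by the sign vector $\mathbf{s}$ taking the value $+1$ on one part and $-1$ on the other. Thus in the non-bipartite case $f(-1)=\det(D+A)\neq0$, the order of vanishing of $f$ at $u=-1$ is zero, and we conclude $AM(-1)=m-n$ immediately.

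In the bipartite case $f(-1)=0$, so I would write $f(u)=(u+1)h(u)$, and it remains to show the zero is simple, i.e. $h(-1)=f'(-1)\neq0$, which gives $AM(-1)=m-n+1$. As in Proposition \ref{pro:am-plus1}, the Jacobi formula yields $f'(-1)=\Tr\!\left(\adj(D+A)\,M'(-1)\right)$ with $M'(-1)=-A-2(D-I)=-\big((D+A)+(D-2I)\big)$. Since $D+A$ is symmetric of rank $n-1$ with kernel spanned by $\mathbf{s}$, its adjugate is the rank-one matrix $\adj(D+A)=\eta\,\mathbf{s}\mathbf{s}^T$ for some $\eta\neq0$. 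The term from $D+A$ vanishes because $\mathbf{s}^T(D+A)\mathbf{s}=0$, while the term from $D-2I$ gives $\mathbf{s}^T(D-2I)\mathbf{s}=\sum_k(d_k-2)\mathbf{s}_k^2=\sum_k(d_k-2)=2m-2n$, using $\mathbf{s}_k^2=1$. Hence $f'(-1)=-\eta(2m-2n)\neq0$ since $m>n$, completing the bipartite case.

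The step I expect to require the most care is the passage from ``the smallest eigenvalue of $D+A$ is zero'' to the precise statement that its kernel is exactly one-dimensional with explicit spanning vector $\mathbf{s}$; this simplicity is what licenses the rank-one form $\adj(D+A)=\eta\,\mathbf{s}\mathbf{s}^T$ and, with it, the clean evaluation of $f'(-1)$. The convenient feature that $\mathbf{s}_k^2=1$ for every $k$ (just as $\mathbf{1}_k^2=1$ in the $\lambda=1$ case) is what makes the final trace collapse to the same quantity $2m-2n$ as before, so that the two real unit eigenvalues are handled by a single computation differing only in the choice of kernel vector.
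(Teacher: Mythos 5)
Your proof is correct and follows essentially the same route as the paper: evaluate the Ihara-Bass factor $f(u)=\det\left(I-uA+u^{2}(D-I)\right)$ at $u=-1$ to get the signless Laplacian, split on bipartiteness via its singularity, and in the bipartite case use Jacobi's formula with $\adj(D+A)=\eta\,\mathbf{s}\mathbf{s}^{T}$ to show $f'(-1)=\eta\left(2n-2m\right)\neq0$. The only difference is cosmetic: you prove the singularity criterion and the rank-one adjugate structure directly from the quadratic form $\sum_{k-l}\left(\mathbf{x}_{k}+\mathbf{x}_{l}\right)^{2}$, whereas the paper cites the literature for the former and leaves the latter implicit.
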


\begin{proof}
Define $f(u)$ as in Proposition \ref{pro:am-plus1} and observe that
$f(-1)=\det\left(D+A\right)$, where $D+A$ is called the signless
Laplacian of $G$. It is known that $D+A$ is singular if and only
if $G$ is bipartite \cite{cvetkovic2007signless}. Therefore, if
$G$ is not bipartite, $f(-1)\neq0$ and $AM(-1)=m-n$ due to the
Ihara-Bass formula (\ref{eqn:ihara-bass}). If $G$ is bipartite,
$f(-1)=0$ and $AM(-1)\geq m-n+1$. In this case, write $f(u)=(1+u)g(u)$
and note $f'(-1)=g(-1)$. To finish, we show $g(-1)\neq0$. Let the
partition of the node set be $U_{1}$ and $U_{2}$ and define the
vector $\mathbf{v}$ by putting $\mathbf{v}_{i}=1$ if $i\in U_{1}$
and $\mathbf{v}_{j}=-1$ if $j\in U_{2}$. A similar procedure as
in Proposition \ref{pro:am-plus1} shows that 
\[
g(-1)=f'(-1)=\Tr\left(\adj\left(D+A\right)\left(2I-D\right)\right)=\mathbf{v}^{T}\left(2I-D\right)\mathbf{v}= \eta \left( 2n-2m \right) \neq 0,
\]
for some nonzero number $\eta$.
\end{proof}
\begin{prop}
Let $G$ have at least two cycles. Then $GM(-1)=m-n+1$ if $G$ is
bipartite and $GM(-1)=m-n$ if $G$ is not bipartite.
\end{prop}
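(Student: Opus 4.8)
The plan is to run the same program as in Proposition \ref{pro:gm-plus1}: first reduce the eigenvector equation $B\mathbf{v}=-\mathbf{v}$ to a transparent linear system, and then count its solution space. Setting $\lambda=-1$ in (\ref{eqn:eigenvector-in-two-directions}) gives, for every pair of neighbors $k,l$,
\[
\nbcent=\mathbf{v}_{l\to k}-\mathbf{v}_{k\to l}.
\]
The right-hand side depends on $l$ but the left-hand side does not; writing the same equation at $l$ with neighbor $k$ and adding the two yields $\nbcent[k]=-\nbcent[l]$ for every edge $k-l$. Thus the node function $k\mapsto\nbcent$ is constant up to an alternating sign across edges, and this is exactly where the bipartite/non-bipartite dichotomy enters: if $G$ contains an odd cycle then traversing it forces $\nbcent=0$ at one node and hence, by connectedness, $\nbcent\equiv 0$ everywhere; if $G$ is bipartite with parts $U_1,U_2$ then $\nbcent$ may take a common value $q$ on $U_1$ and $-q$ on $U_2$.

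For the non-bipartite case I would argue directly. With $\nbcent\equiv 0$, the displayed equation forces $\mathbf{v}_{k\to l}=\mathbf{v}_{l\to k}$ for every edge, i.e.\ $\mathbf{v}$ is symmetric, and conversely any symmetric $\mathbf{v}$ with $\nbcent\equiv 0$ solves $B\mathbf{v}=-\mathbf{v}$. A symmetric vector is precisely a function $\tilde{\mathbf{v}}\in\mathbb{C}^{m}$ on the undirected edges, under which the remaining constraints $\nbcent=\sum_{i}a_{ik}\mathbf{v}_{i\to k}=0$ become $M\tilde{\mathbf{v}}=0$, where $M$ is the (unsigned) $n\times m$ node--edge incidence matrix. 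Hence $GM(-1)=\dim\ker M=m-\operatorname{rank}(M)$. Since $MM^{T}=D+A$ is the signless Laplacian, $\operatorname{rank}(M)=\operatorname{rank}(D+A)$, which equals $n$ precisely because $G$ is connected and non-bipartite (the same fact \cite{cvetkovic2007signless} that $D+A$ is nonsingular iff $G$ is non-bipartite). This gives $GM(-1)=m-n$. Note the pleasing parallel with system (\ref{eqn:system-1}): for $\lambda=+1$ the eigenvectors are the antisymmetric, divergence-free flows, the kernel of the \emph{signed} incidence matrix of dimension $m-n+1$, whereas for $\lambda=-1$ on a non-bipartite graph they are the symmetric vectors in the kernel of the \emph{unsigned} incidence matrix.

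For the bipartite case I would avoid re-deriving the count and instead reduce to Proposition \ref{pro:gm-plus1} by a similarity. Fix a proper $2$-colouring $\sigma\colon V\to\{\pm1\}$ and let $\Sigma$ be the diagonal matrix on $\bar{E}$ with $\Sigma_{i\to j}=\sigma(j)$, the sign of the target. A one-line check against the definition (\ref{eqn:nbm}) shows that for every admissible transition $i\to k,\,k\to l$ one has $\sigma(l)\sigma(k)=-1$, so that $\Sigma B\Sigma^{-1}=-B$. Consequently $\mathbf{v}\mapsto\Sigma\mathbf{v}$ is a linear isomorphism from the $(-1)$-eigenspace of $B$ onto the $(+1)$-eigenspace of $B$, whence $GM(-1)=GM(1)=m-n+1$ by Proposition \ref{pro:gm-plus1}.

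I expect the dichotomy bookkeeping to be routine; the main point to get right is the rank statement $\operatorname{rank}(D+A)=n$ for connected non-bipartite $G$, which is where the hypotheses enter essentially. A self-contained alternative, should one wish to avoid invoking the incidence/signless-Laplacian rank in the bipartite case, is a single unified parametrization: write $\mathbf{v}_{k\to l}=s_{\{k,l\}}-\tfrac12\nbcent[k]$, splitting each edge into a free symmetric part $s\in\mathbb{C}^{m}$ and the forced antisymmetric part, then impose the definitions $\nbcent=\sum_i a_{ik}\mathbf{v}_{i\to k}$ as the constraints. One checks that the resulting homogeneous system has a solution space of dimension $m-n$ when $G$ is non-bipartite and $m-n+1$ when $G$ is bipartite, the extra dimension surviving precisely because $\sum_k(d_k-2)=2(m-n)>0$ under the standing assumption $m>n$.
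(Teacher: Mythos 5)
Your proof is correct, but it follows a genuinely different route from the paper's. The paper reduces $B\mathbf{v}=-\mathbf{v}$ to the system (\ref{eqn:system-minus-1}) (symmetric vectors with $\nbcent[l]=0$ at every node), exhibits solutions supported on even-length cycles, and then cites \cite{rowlinson2004,simic2015,godsil2013algebraic} for the dimension of the space spanned by such cycle vectors ($m-n$ or $m-n+1$ according to bipartiteness). You do the counting yourself. In the non-bipartite case you identify the solution space with the kernel of the unsigned incidence matrix $M$ and compute $\operatorname{rank}(M)=\operatorname{rank}(MM^{T})=\operatorname{rank}(D+A)=n$, using only the signless-Laplacian nonsingularity fact \cite{cvetkovic2007signless} that the paper already invokes for computing $AM(-1)$; in the bipartite case you bypass the counting (and the possibility of a nonzero alternating constant $q$, a subtlety the paper's ``argument similar to Proposition \ref{pro:gm-plus1}'' glosses over and which in fact requires $m>n$ to dispose of) via the anticommutation $\Sigma B=-B\Sigma$, which maps the $(-1)$-eigenspace isomorphically onto the $(+1)$-eigenspace and hands the count to Proposition \ref{pro:gm-plus1}. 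Both key steps check out: $\sigma(l)\sigma(k)=-1$ for every admissible transition since $k$ and $l$ are adjacent, and $\Sigma$ is invertible, so $GM(-1)=GM(1)$. As for what each approach buys: yours is more self-contained, and $\Sigma B\Sigma^{-1}=-B$ is a structural fact of independent interest (the NB-spectrum of a bipartite graph is invariant under negation); the paper's construction, on the other hand, directly produces eigenvectors supported on even cycles, which is what feeds Corollary \ref{cor:eigenspace-basis-real} --- your kernel description yields the dimension in the non-bipartite case but not such a supported basis (in the bipartite case the $\Sigma$-images of the cycle flows of Proposition \ref{pro:gm-plus1} do provide one). The closing ``unified parametrization'' sketch is dispensable and too vague to stand as a proof, but nothing in your main argument depends on it.
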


\begin{proof}
Inspection of (\ref{eqn:eigenvector-in-two-directions}) when $\lambda=-1$
and an argument similar to that in Proposition \ref{pro:gm-plus1}
shows that if $B\mathbf{v}=-\mathbf{v}$ then 
\begin{equation}
\begin{cases}
\nbcent[l]=0 & \text{for each node }l,\\
\mathbf{v}_{k\to l}=\mathbf{v}_{l\to k} & \text{for each edge \ensuremath{k-l}}.
\end{cases}\label{eqn:system-minus-1}
\end{equation}
To fix ideas, suppose the cycle $\mathcal{C}=x\to y,y\to z,z\to t,t\to z$
exists in $G$, and consider the vector $\mathbf{v}$ with $\mathbf{v}_{x\to y}=\mathbf{v}_{y\to x}=\mathbf{v}_{z\to t}=\mathbf{v}_{t\to z}=1$
and $\mathbf{v}_{y\to z}=\mathbf{v}_{z\to y}=\mathbf{v}_{t\to x}=\mathbf{v}_{x\to t}=-1$,
so that $\mathbf{v}$ satisfies Equation (\ref{eqn:system-minus-1}).
(See Figure (\ref{fig:tree-circle})(c) for an example.) In general,
if $\mathcal{C}$ has even length, $\mathbf{v}$ will satisfy (\ref{eqn:system-minus-1}).
The dimension of the space spanned by the even-length cycles has been
studied in \cite{rowlinson2004,simic2015,godsil2013algebraic}, and
it is known to be $m-n$ when $G$ is not bipartite and $m-n+1$ when
it is.
\end{proof}
\begin{cor}
\label{cor:eigenspace-basis-real}The eigenspace of $\lambda=1$ admits
a basis where each element is supported on a different cycle (any
cycle in the graph, not only pendants or collars or bracelets), while
the eigenspace of $\lambda=-1$ admits a basis where each element
is supported on a different cycle of even length.\qed
\end{cor}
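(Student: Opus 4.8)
The plan is to read both bases directly off the constructive proofs of the two preceding propositions, verifying in each case that the spanning vectors already produced are supported on single cycles and are linearly independent.

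For $\lambda=1$ nothing new is required. The proof of Proposition \ref{pro:gm-plus1} fixes a spanning tree $T$ and, for every edge $(k-l)\notin T$, produces the eigenvector $\mathbf{v}^{k\to l}$ supported exactly on the fundamental cycle determined by $k-l$. That proof also shows these $m-n+1$ vectors are linearly independent, since $\mathbf{v}^{k\to l}$ is the only one of them with a nonzero $k\to l$ coordinate; as $GM(1)=m-n+1$, they form a basis. Each fundamental cycle is an ordinary simple cycle of $G$, with no constraint to be a pendant, collar, or bracelet, which is exactly the first assertion.

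For $\lambda=-1$ I would first recast the eigenvalue equation through system (\ref{eqn:system-minus-1}). Since that system forces $\mathbf{v}_{k\to l}=\mathbf{v}_{l\to k}$ on every edge, an eigenvector is equivalent to an undirected edge function $w\colon E\to\mathbb{R}$ with $w(\{k,l\})=\mathbf{v}_{k\to l}$, and the remaining condition $\nbcent[l]=0$ becomes $\sum_{e\ni l}w(e)=0$ at every node $l$. Hence the $(-1)$-eigenspace is isomorphic to the kernel of the unsigned incidence matrix of $G$, and under this isomorphism an eigenvector supported on a set of nodes corresponds to a $w$ supported on the induced edges. An even cycle $\mathcal{C}$ yields such a $w$ by assigning alternating values $\pm1$ around $\mathcal{C}$, which is precisely the eigenvector built in the preceding proposition.

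It then remains to extract a basis. Here I would invoke the cited fact (\cite{rowlinson2004,simic2015,godsil2013algebraic}) that this kernel has dimension $m-n$ when $G$ is not bipartite and $m-n+1$ when it is, matching $GM(-1)$, and that it is spanned by the vectors coming from simple even cycles. A spanning set of single-even-cycle vectors always contains a basis consisting of single-even-cycle vectors, so pulling such a basis back through the isomorphism gives a basis of the $(-1)$-eigenspace with each element supported on a distinct even cycle. The main obstacle is exactly this last point: one must know not merely that even \emph{subgraphs} span the kernel, but that simple even \emph{cycles} do, so that each extracted basis vector lives on a genuine cycle. In the bipartite case this is automatic, since every fundamental cycle of a spanning tree is already even and the $\lambda=1$ argument applies verbatim; in the non-bipartite case it is precisely the content of the cited references.
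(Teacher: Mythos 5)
Your proposal matches the paper's own justification: the corollary is stated with \qed precisely because both bases are read directly off the constructions in Propositions \ref{pro:am-plus1}--\ref{pro:gm-plus1} and their $\lambda=-1$ counterparts --- the spanning-tree fundamental-cycle vectors for $\lambda=1$, and the even-cycle vectors together with the cited dimension results \cite{rowlinson2004,simic2015,godsil2013algebraic} for $\lambda=-1$. Your extra care about the distinction between even subgraphs spanning the kernel and simple even cycles spanning it is a legitimate refinement of the same argument, not a different route.
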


\begin{rem*}
We can use our knowledge of the multiplicities of unit eigenvalues
to study the poles of the so-called Ihara-Zeta function through the
Ihara-Bass formula (\ref{eqn:ihara-bass}) as well as other matrices
that may be associated to the underlying graph. For example, with
$f(u)$ as defined in Proposition \ref{pro:am-plus1}, evaluating
$f(i)$ yields that the matrix $A-iD$ has nullity equal to the number
of collars of size $4$ in the graph. In the future, it will be interesting
to see if this ``complex Laplacian'' matrix $A-iD$ holds any more
interesting information about the graph.
\end{rem*}

\subsubsection{Examples\label{sec:examples-unit}}

In Figure \ref{fig:tree-circle}, panel (b) shows eigenvectors $\mathbf{v},\mathbf{u}$
each of which is supported on a single cycle, and correspond to third
roots of unity. Panel (c) shows eigenvectors $\mathbf{w},\mathbf{z}$,
each of which is supported on a single cycle, corresponding to fourth
roots of unity. 

Consider a graph with a pendant of size $3$. Since the pendant has
six directed edges, its existence is associated to six NB-eigenvalues.
Three of them are the third roots of unity, as per the results of
this section. The other three may or may not be roots of unity, and
the corresponding eigenvectors will in general not be supported on
the pendant. This is illustrated in Figure \ref{fig:examples-unit}.
Panel (a) shows a graph with one cycle and non-empty $1$-shell; its
eigenvalues are described by Sections \ref{sec:1-shell} and \ref{sec:1-cycle}.
Panel (b) shows the same graph with one new edge added, forming a
new pendant of size~$3$. The multiplicities of the third roots of
unity equal the number of pendants of size $3$ in this graph. Further,
the other three eigenvalues associated to the addition of the new
pendant are in fact the fundamental sixth roots of unity, corresponding
to the formation of a bracelet of size $6$. The corresponding eigenvectors
are not supported on either pendant, but on the whole graph. The leading
eigenvalues of this graph are explained by Theorem \ref{thm:leading}.
Panel (c) shows the same graph as in (a) but with two new edges, forming
now a collar of size $4$. The only unit eigenvalues are those corresponding
to the pendant and the collar, all other eigenvalues are outer or
leading.

In summary, by adding a new collar or pendant of size $r$ to an arbitrary
graph, there will always be $r$ new eigenvalues that are $r^{th}$
roots of unity, as well as a new eigenvalue equal to $-1$. In some
cases, as in Figure \ref{fig:examples-unit}(b), the other $r-1$
new eigenvalues will be roots of unity as well, of some order not
necessarily $r$. In other other cases, as in Figure \ref{fig:examples-unit}(c),
those other eigenvalues are not unitary. Studying these $r-1$ eigenvalues,
as well as what happens to all the previous ones, is an interesting
direction of future research.

We now illustrate some further facts about non-leaky vectors. In the
graph of Figure \ref{fig:examples-unit}(c), let $\mathbf{v}_{1},\mathbf{v}_{2}$
be such that $B\mathbf{v}_{1}=j\mathbf{v}_{1}$ and $B\mathbf{v}_{2}=i\mathbf{v}_{2}$
and $\mathbf{v}_{1}$ is supported on the pendant and $\mathbf{v}_{2}$
is supported on the collar. Note that $\mathbf{v}\coloneqq\mathbf{v}_{1}+\mathbf{v}_{2}$
is non-leaky and it satisfies $B^{12}\mathbf{v}=\mathbf{v}$, but
it is not an eigenvector, in accordance with Lemma \ref{lem:non-leaky-combination}.
Thus, non-leaky vectors are not necessarily always eigenvectors, even
if their support can be decomposed in different cycles; cf. Corollary
\ref{cor:eigenspace-basis}.

Now consider the graph in Figure \ref{fig:karate}(b). Following Corollary
\ref{cor:eigenspace-basis}, there is a basis of the eigenspace corresponding
to $\lambda=i$ such that each element of a basis is supported on
a different collar of length $4$. In this case, though there are
three nodes of degree $2$, there are only two linearly independent
such collars, as any two of them overlap in exactly three edges. Thus,
the collars giving rise to the basis are all different, but they may
be overlapping. Note the graph in Figure \ref{fig:karate}(b) is a
subgraph of the graph shown in panel (a) of the same Figure, and thus
the basis corresponding to the eigenspace of $\lambda=i$ in this
graph also consists of overlapping collars. Finally, note that each
collar giving rise to these bases contains at least one unique node
of degree $2$; this is the node used in case $2$ of the proof of
Theorem \ref{thm:root}. 

\begin{figure}
\includegraphics{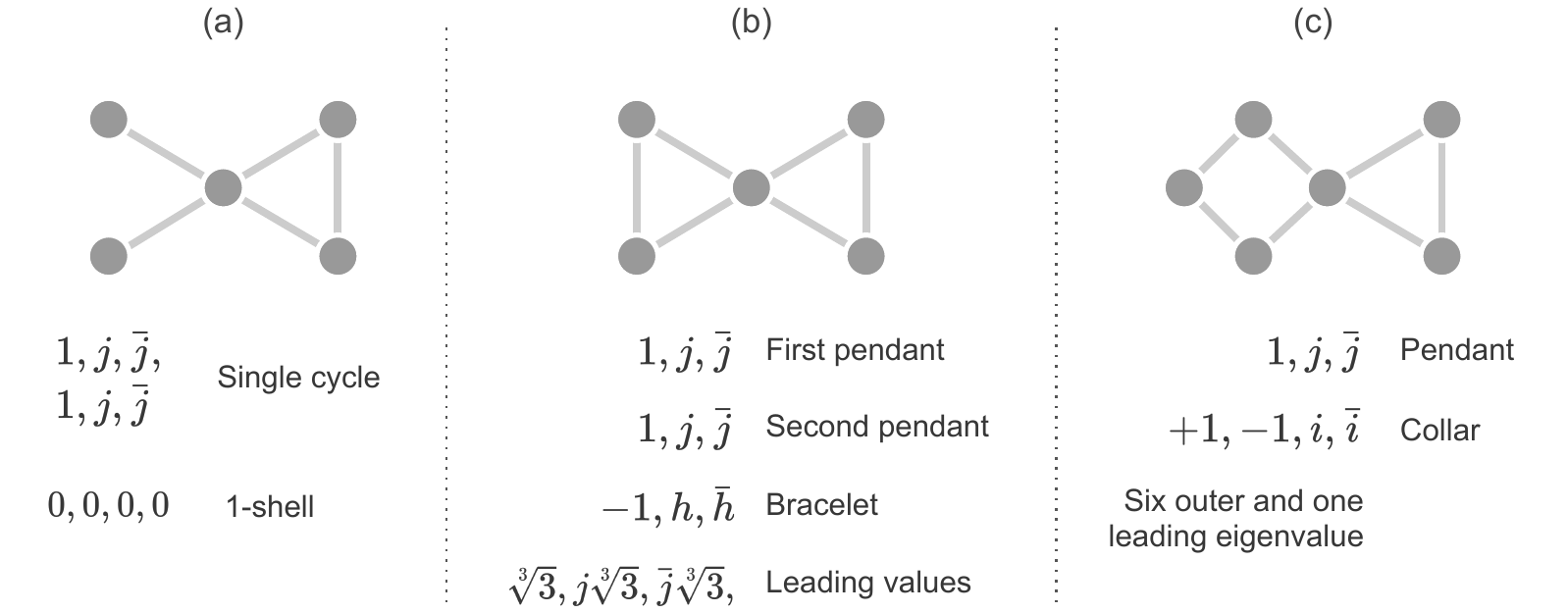}

\caption{\label{fig:examples-unit}Eigenvalues of example graphs. See Section
\ref{sec:examples-unit} for discussion. Here, $i^{2}=-1$, $j\protect\coloneqq\frac{-1+i\sqrt{3}}{2}$,
$h\protect\coloneqq\frac{1+i\sqrt{3}}{2}$. Adding a new collar or
pendant to the graph in (a) generates six new eigenvalues. In (b),
all six are unitary, owing to the fact that a bracelet of size $6$
has been formed. In (c), only three of the new eigenvalues are unitary.}
\end{figure}

\subsection{The outer eigenvalues: a conjecture\label{sec:outer}}

In our experience, the eigenvalues with $|\lambda|=1$ or $\lambda=0$
are the only eigenvalues we have found in practice to have multiplicity
greater than $1$ in random graphs and real networks. (The case of
leading eigenvalues $|\lambda|=\rho$ is treated in the next section).
In the case of outer eigenvalues, we believe results similar to the
case of random matrices \cite{erdos2012,tao2017} will hold for the
NB-matrix. In particular, it is know that some ensembles of random
matrices have simple spectrum. If that is the case for the outer NB-eigenvalues,
the only eigenvalue that can be expected to be defective is $\lambda=0$
(as per Proposition \ref{pro:zero}). In view of this observation,
we present the following conjecture on the diagonalizability of the
NB-matrix.
\begin{conjecture*}
The NB-matrix of $G$ is diagonalizable if and only if $G$ has empty
$1$-shell. 
\end{conjecture*}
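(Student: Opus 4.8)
The plan is to prove the two implications separately, organizing everything around the principle (made precise by Table \ref{tab:results}) that the only eigenvalue which can ever be defective is $\lambda=0$. Recall that a matrix is diagonalizable exactly when $AM(\lambda)=GM(\lambda)$ for every eigenvalue $\lambda$, so both directions amount to controlling these multiplicities.

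I would dispatch the ``only if'' direction (diagonalizable $\Rightarrow$ empty $1$-shell) by contraposition, assuming the $1$-shell is non-empty and exhibiting a defective eigenvalue. If $G$ is a tree this is precisely Proposition \ref{pro:tree-eigenvalues}. Otherwise the $2$-core is non-empty, and Proposition \ref{pro:zero} gives $AM(0)=2s_{1}$ and $GM(0)=n_{1}$. Since every degree-one node lies in the $1$-shell we have $n_{1}\le s_{1}$, and non-emptiness gives $s_{1}\ge 1$; hence $GM(0)=n_{1}\le s_{1}<2s_{1}=AM(0)$, so $\lambda=0$ is defective and $B$ is not diagonalizable. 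This direction uses only results already in hand.

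For the ``if'' direction (empty $1$-shell $\Rightarrow$ diagonalizable) I would assume $G$ is md$2$ and verify $AM(\lambda)=GM(\lambda)$ for every eigenvalue by classifying according to magnitude. First, the Corollary to Proposition \ref{pro:zero} shows $B$ is invertible, so $0$ is not an eigenvalue and the defective case above cannot occur. A connected md$2$ graph satisfies $m\ge n$: if $m=n$ it is a cycle graph and $B$ is diagonalizable by the Proposition of Section \ref{sec:1-cycle}. If $m>n$ then $G$ has at least two cycles, and I would partition the remaining (necessarily nonzero) eigenvalues as in Table \ref{tab:results}: there are none with $0<|\lambda|<1$ by Kotani--Sunada; the unit eigenvalues $|\lambda|=1$ are non-defective by Theorem \ref{thm:unit-multiplicity}; the leading eigenvalues $|\lambda|=\rho$ are simple by Theorem \ref{thm:leading}; and the outer eigenvalues $1<|\lambda|<\rho$ are simple by the conjecture of Section \ref{sec:outer}. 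Since every nonzero NB-eigenvalue satisfies $1\le|\lambda|\le\rho$, these categories are exhaustive, so $AM(\lambda)=GM(\lambda)$ throughout and $B$ is diagonalizable.

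The main obstacle is precisely why this statement is posed as a conjecture rather than a theorem: the ``if'' direction rests entirely on the unproven claim that the outer eigenvalues are simple. Every other ingredient is established earlier, so a proof of the outer-eigenvalue conjecture would immediately upgrade this statement to a theorem. A secondary but genuine subtlety is the degenerate graph $K_{2}$, whose $1$-shell is non-empty yet whose NB-matrix is $B=0$ and hence diagonalizable; the statement must therefore either exclude $K_{2}$ (for instance by requiring $G$ to contain a cycle) or qualify the tree case of the ``only if'' direction accordingly.
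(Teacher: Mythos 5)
Your proposal is correct as a conditional argument and follows essentially the same route the paper itself uses to justify this statement: the statement is posed as a conjecture precisely because the simplicity of outer eigenvalues (Section \ref{sec:outer}) is unproven, so the best available ``proof'' is exactly your assembly of Proposition \ref{pro:zero} (zero is defective iff the $1$-shell is non-empty), the Kotani--Sunada exclusion of eigenvalues with $0<|\lambda|<1$, Theorem \ref{thm:unit-multiplicity} for unit eigenvalues, Theorem \ref{thm:leading} for leading ones, and the outer-eigenvalue conjecture for the rest, with the dependence on that sub-conjecture correctly flagged. Your $K_{2}$ caveat is moreover a genuine catch that the paper overlooks: for $K_{2}$ the NB-matrix is the $2\times2$ zero matrix, hence diagonalizable despite a non-empty $1$-shell, so the ``only if'' direction (and the proof of Proposition \ref{pro:tree-eigenvalues}, which asserts $B=0$ is impossible for a connected graph) implicitly requires graphs with at least three nodes.
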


\subsection{The leading eigenvalues\label{sec:leading}}

Let $G$ be a graph whose NB-matrix has spectral radius $\rho$. Kotani
and Sunada show that $\rho=1$ if and only if $G$ is a cycle graph.
Further, they fully characterize those eigenvalues with $|\lambda|=\rho>1$
for graphs with more than one cycle in Theorem 1.4 of \cite{Kotani2000}
using the language of graph Zeta functions. For completeness, here
we paraphrase their theorem in the language of the NB-matrix.
\begin{thm}
[from \cite{Kotani2000}]\label{thm:leading}Let $G$ be a md$2$
graph with at least $2$ cycles. Let $\nu$ be the greatest common
divisor of the set of lengths of all NB-cycles. Then, every $\lambda=\rho\exp\left(2\pi ik/\nu\right)$
for $k=1,\ldots,\nu-1$ is a NB-eigenvalue of $G$ with multiplicity
$1$. \qed
\end{thm}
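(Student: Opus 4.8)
The plan is to bypass the zeta-function machinery of \cite{Kotani2000} and instead derive the statement directly from Perron--Frobenius theory applied to $B$ as a nonnegative matrix. The NB-matrix $B$ has entries in $\{0,1\}$ by Equation (\ref{eqn:nbm}), so it is nonnegative, and its associated digraph $\Gamma$ is exactly the digraph on the oriented edges $\bar{E}$ with an arc from $i\to j$ to $k\to l$ precisely when $k\to l$ NB-extends $i\to j$, i.e. when $B_{k\to l,\,i\to j}=1$. The first and decisive step is to show that, under the hypotheses (connected, md$2$, at least two cycles, hence not a cycle graph), $B$ is irreducible, i.e. $\Gamma$ is strongly connected. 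Granting this, standard Perron--Frobenius theory for irreducible nonnegative matrices (see, e.g., \cite{horn2012matrix}) tells us that the eigenvalues of maximum modulus $\rho$ are exactly the $h$ numbers $\rho\,\omega^{k}$, $k=0,\ldots,h-1$, where $\omega=\exp(2\pi i/h)$ and $h$ is the index of imprimitivity of $B$; moreover each of these peripheral eigenvalues is algebraically simple (so both $AM$ and $GM$ equal $1$), and the whole spectrum is invariant under multiplication by $\omega$.

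The second step is the combinatorial identification $h=\nu$. The index of imprimitivity of an irreducible nonnegative matrix equals the greatest common divisor of the lengths of the directed cycles of its digraph $\Gamma$. A directed cycle of length $r$ in $\Gamma$ is a sequence of oriented edges, each NB-extending the previous one and closing up without a backtrack at the seam; by the definition given in Section \ref{sec:preliminaries} this is precisely a non-backtracking cycle of length $r$ in $G$. Hence the set of directed-cycle lengths in $\Gamma$ coincides with the set of lengths of NB-cycles in $G$, and taking gcd gives $h=\nu$. Combining with the first step, the peripheral eigenvalues are exactly $\rho\exp(2\pi i k/\nu)$ for $k=0,\ldots,\nu-1$, each simple; discarding the Perron value $k=0$ yields the claimed $\nu-1$ eigenvalues. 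That $\rho>1$, so that this regime is genuinely distinct from the cycle-graph case, follows from the Kotani--Sunada fact recalled above that $\rho=1$ only for cycle graphs.

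The main obstacle is the irreducibility claim, since it is the only place the hypotheses ``md$2$'' and ``at least two cycles'' are really used, and it is exactly what fails for a single cycle, where $B$ splits into the clockwise and counter-clockwise blocks. I would prove strong connectivity of $\Gamma$ in two moves. First, md$2$ guarantees that every NB-walk can always be extended, since each target node has a non-backtracking continuation; consequently every oriented edge can reach some NB-cycle, and the oriented edges traversing any single cycle of $G$ in a fixed orientation form a directed cycle of $\Gamma$ and are therefore mutually reachable. The delicate point is reversal: from an oriented edge $i\to j$ one must reach the oriented edge $j\to i$, which is impossible inside a single cyclic orbit. Here I would use that $G$ is not a cycle, so some node has degree at least $3$; routing an NB-walk from $i\to j$ to a branch node, around a second cycle, and back so that it re-enters $j$ from a neighbor other than $i$ lets the walk then traverse $j\to i$. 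Formalizing this ``escape and return'' along two distinct cycles, and then chaining such reversals together with the within-cycle reachability and the connectivity of $G$, shows that any oriented edge reaches any other, establishing irreducibility. Once strong connectivity is in hand, the remaining steps are direct invocations of standard theory.
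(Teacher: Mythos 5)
Your proposal is correct and takes essentially the same route as the paper: the paper offers no independent proof, but its accompanying remark states precisely this argument --- apply the Perron--Frobenius theory of irreducible nonnegative matrices to $B$, which is ``always irreducible with period $\nu$'' --- deferring the details to Lemma 2.1 of \cite{Kotani2000}. What you add is a sketch of the two facts the paper cites rather than proves (strong connectivity of the oriented-edge digraph for connected, md$2$, non-cycle graphs, and the identification of the imprimitivity index with the gcd of NB-cycle lengths), and these sketches are sound, though the ``escape and return'' reversal step would need the careful formalization you acknowledge.
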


\begin{rem*}
The proof of this Theorem is a consequence of Lemma 2.1 of \cite{Kotani2000}.
Essentially, it is a consequence of applying the Perron-Frobenius
theorem to the NB-matrix. Indeed, in the language of Perron-Frobenius
theory, the NB-matrix of $G$ is always irreducible with period $\nu$.
\end{rem*}
\begin{cor}
[from \cite{Kotani2000}] Suppose $G$ has minimum degree at least
$3$. 
\begin{enumerate}
\item If $G$ is bipartite, then $\nu=2$ and there are only two leading
eigenvalues, namely $\rho$ and $-\rho$.
\item If $G$ is not bipartite, then $\nu=1$ and there is only one leading
eigenvalue, namely $\rho.$ In this case, we call $\lambda=\rho$
\emph{the Perron eigenvalue}. \qed
\end{enumerate}
\end{cor}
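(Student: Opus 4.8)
The plan is to deduce both statements from Theorem~\ref{thm:leading}, reducing the whole problem to the computation of $\nu$, the greatest common divisor of the lengths of all NB-cycles of $G$. Since $G$ is md$3$ it is in particular md$2$ with at least two cycles, so the theorem applies and tells us that the leading eigenvalues are exactly the $\nu$ numbers $\rho\exp(2\pi i k/\nu)$ for $k=0,\ldots,\nu-1$. Thus it suffices to show that $\nu=2$ when $G$ is bipartite and $\nu=1$ otherwise; the conclusions about $\rho,-\rho$ (resp. the single Perron eigenvalue $\rho$) then follow by specializing the list of leading eigenvalues.

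First I would pass from NB-cycles to ordinary simple cycles (closed walks with no repeated vertex). Every simple cycle is in particular an NB-cycle, so if $g$ denotes the gcd of the lengths of all simple cycles of $G$, then $\nu\mid g$ because the gcd taken over the larger family of NB-cycles divides the gcd taken over the smaller family. I would then establish the combinatorial fact that a connected md$3$ graph has $g=1$ when it is non-bipartite and $g=2$ when it is bipartite. The non-bipartite case is then immediate, since $\nu\mid g=1$ forces $\nu=1$. In the bipartite case $\nu\mid g=2$ leaves $\nu\in\{1,2\}$; but every closed NB-walk in a bipartite graph has even length, so $2\mid\nu$, and hence $\nu=2$.

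The heart of the argument is therefore the claim that a connected md$3$ graph contains a cycle with a chord. To see this I would take a longest path $v_0 v_1\cdots v_k$: maximality forces every neighbor of $v_0$ to lie on the path, and since $d_{v_0}\ge 3$ there are indices $1<i<j$ with $v_0 v_i, v_0 v_j\in E$. The cycle $v_0 v_1\cdots v_j v_0$ then carries the chord $v_0 v_i$. Given such a chorded cycle $C$, the chord splits $C$ into two simple cycles $C_1,C_2$ with $|C_1|+|C_2|=|C|+2$, so $g$ divides $|C_1|+|C_2|-|C|=2$, i.e.\ $g\mid 2$. Parity then pins $g$ down exactly: if $G$ is bipartite all cycle lengths are even, so $2\mid g$ and $g=2$; if $G$ is non-bipartite there is an odd cycle, whose length is coprime to $2$, forcing $g=1$. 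Combined with the reduction above, this yields $\nu=2$ (resp.\ $\nu=1$).

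I expect the main obstacle to lie entirely in isolating and proving the chorded-cycle lemma; once that local structure is in hand, the passage from simple cycles to NB-cycles and the parity bookkeeping are routine. A secondary point requiring care is the direction of the divisibility $\nu\mid g$ (it is the gcd over NB-cycles that divides the gcd over simple cycles, not the reverse), together with the observation that bipartiteness alone supplies the lower bound $2\mid\nu$, which the simple-cycle computation does not by itself provide. It is also worth recording why md$3$ is essential: for md$2$ graphs such as two triangles sharing a single vertex one finds $\nu=3$, so the hypothesis cannot be weakened.
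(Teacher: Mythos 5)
Your proof is correct, but it takes a genuinely different route from the paper. The paper offers no argument of its own for this corollary: it defers entirely to Theorem 1.5 of Kotani--Sunada \cite{Kotani2000}, imported as a black box (and its remark only adds that the md$3$ hypothesis can be weakened to the $2$-core being md$3$). You instead derive the statement from Theorem \ref{thm:leading} by actually computing $\nu$: the longest-path argument producing a chorded cycle in any connected md$3$ graph, the divisibility $g\mid 2$ for the gcd $g$ of simple cycle lengths that the chord forces, the containment of simple cycles among NB-cycles giving $\nu\mid g$, and the parity observation that bipartiteness makes every closed walk even, hence $2\mid\nu$, which pins down $\nu$ exactly in both cases. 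This is sound, and your two-triangles example ($\nu=3$, md$2$, non-bipartite) isolates precisely why md$3$ is indispensable --- something the paper never explains. One point deserves explicit care: Theorem \ref{thm:leading} as literally stated only asserts that the numbers $\rho\exp(2\pi i k/\nu)$ \emph{are} simple eigenvalues; the completeness of this list of leading eigenvalues --- which the corollary's word ``only'' requires --- comes from the Perron--Frobenius theory of irreducible matrices with period $\nu$, invoked in the remark following that theorem. Your phrase ``the leading eigenvalues are exactly the $\nu$ numbers'' quietly assumes this, so you should cite that fact explicitly; with that reference added, your argument is complete, self-contained, and more informative than the paper's pure citation.
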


\begin{rem*}
The Corollary is a consequence of Theorem 1.5 of \cite{Kotani2000}.
Recall from Section \ref{sec:zero-one} that the $1$-shell does not
affect the non-zero eigenvalues. Therefore, the corollary can be slightly
strengthened by changing the assumption that $G$ has minimum degree
at least $3$ to the assumption that the $2$-core of $G$ has minimum
degree at least $3$.
\end{rem*}

\section{\label{sec:diagonalizability}Diagonalizability}

Let $G$ be a a graph with NB-matrix $B$ and empty $1$-shell. If
$B$ can be written in diagonal form as $B=Z\Lambda Z^{-1}$ or, equivalently,
as $B^{*}=\left(Z^{-1}\right)^{*}\Lambda^{*}Z^{*}$, where $\Lambda$
is a diagonal matrix, then the columns of $Z$ contain the right eigenvectors
of $B$ while the rows of $Z^{-1}$ contain the left eigenvectors.
Since $B$ is not normal, we know that $Z$, if it exists, cannot
be unitary, but we may still find relationships among the columns
of $Z$ and the rows of $Z^{-1}$, i.e. between the right and left
eigenvectors.

To do so, we use a special kind of symmetry exhibited by $B$, sometimes
called  \emph{PT-symmetry} \cite{bordenave2015}. Let $P$ be the
operator defined as $P\chi^{i\to j}=\chi^{j\to i}$. It is readily
seen that this operator is involutory ($P^{2}=I$), symmetric ($P^{*}=P$),
and orthogonal ($P^{-1}=P$). We can use Equation (\ref{eqn:nbm})
to prove that $PB$ is symmetric \cite{bordenave2015}. The right
and left NB-eigenvectors are related through $P$.
\begin{lem}
Let $\mathbf{v}$ be a right eigenvector of $B$ with eigenvalue $\lambda$.
Then the row vector $\mathbf{v}^{T}P$ is a left eigenvector of $B$
of eigenvalue $\lambda$.
\end{lem}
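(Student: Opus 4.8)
The plan is to leverage the PT-symmetry of $B$ recorded just above, namely that $PB$ is symmetric. The first step is to turn this symmetry into a usable matrix identity. Since $(PB)^T = PB$ and $P$ is itself symmetric ($P^T = P$), expanding the transpose gives $B^T P^T = B^T P$ on the left, so the symmetry of $PB$ is equivalent to the intertwining relation $B^T P = PB$. This identity is the entire engine of the argument.

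The second step is a one-line computation. Starting from $B\mathbf{v} = \lambda\mathbf{v}$, I would transpose both sides to get $\mathbf{v}^T B^T = \lambda\mathbf{v}^T$, then right-multiply by $P$ to obtain $\mathbf{v}^T B^T P = \lambda\mathbf{v}^T P$. Substituting the intertwining relation $B^T P = PB$ into the left-hand side yields $\mathbf{v}^T P B = \lambda\mathbf{v}^T P$, which says exactly that the row vector $\mathbf{v}^T P$ is a left eigenvector of $B$ with eigenvalue $\lambda$. Because $P$ is invertible (indeed $P^{-1}=P$), the vector $\mathbf{v}^T P$ is nonzero whenever $\mathbf{v}$ is, so it is a genuine left eigenvector and not merely the zero row.

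There is essentially no obstacle here: all the content sits in the PT-symmetry identity, which is already established. The only subtlety worth flagging is that the argument uses the ordinary transpose throughout — matching both the symmetry $(PB)^T = PB$ and the notation $\mathbf{v}^T P$ in the statement — so the eigenvalue is preserved as $\lambda$ rather than conjugated to $\bar\lambda$. This is the expected behavior, since $P$ induces a symmetric bilinear pairing rather than a Hermitian one; had the claim been phrased with the conjugate transpose $\mathbf{v}^* P$, the resulting left eigenvalue would instead be $\bar\lambda$.
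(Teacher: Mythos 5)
Your proposal is correct and follows essentially the same route as the paper: both transpose the eigenvector equation and invoke the symmetry of $PB$ (the paper writes it as $B^{T}=PBP$, you as the equivalent intertwining relation $B^{T}P=PB$) to conclude $\mathbf{v}^{T}PB=\lambda\mathbf{v}^{T}P$. Your added observations that $P$ is invertible (so $\mathbf{v}^{T}P\neq0$) and that using the adjoint would conjugate the eigenvalue are sound and in fact echo the paper's own remark following the lemma.
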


\begin{proof}
That $\mathbf{v}$ is a right eigenvector implies that $\mathbf{v}^{T}B^{T}=\lambda\mathbf{v}^{T}$.
Since $PB$ is symmetric and $P^{2}=I$, we have $B^{T}=PBP$. Use
these two equations and multiply by $P$ again to find $\mathbf{v}^{T}PB=\lambda\mathbf{v}^{T}P$.
\end{proof}
\begin{rem*}
Importantly, in the proof of this Lemma we take the transpose $\left(B^{T}\right)$
and not the adjoint $\left(B^{*}\right)$. This is immaterial for
$B$ since it is a real matrix and thus $B^{*}=B^{T}$; but it is
important for both $\mathbf{v}$ and $\lambda$. Taking the adjoint
leads to the fact that $\mathbf{\bar{v}}^{T}$ is a left eigenvector
of $\bar{\lambda}$, which is a fact that holds for any real matrix,
without the assumption of PT-symmetry.
\end{rem*}
As both $Z^{-1}$ and $Z^{T}P$ contain left eigenvectors in the rows,
we are tempted to ask whether $Z^{-1}=Z^{T}P$. If this were the case,
it would imply $P=ZZ^{T}$, which in turn implies that $P$ is positive
semi-definite. However, this is false as $P$ has eigenvalues $\pm1$.
What then can be said about $Z^{-1}$? We answer this question in
two parts.

First, let $R$ be a matrix whose columns are a maximal set of right
eigenvectors corresponding to unit eigenvalues. Suppose $B_{R}$ is
the restriction of $B$ to the space spanned by the columns of $R$.
We have that $B_{R}$ is a unitary matrix (since all its eigenvalues
are unitary) and therefore it is unitarily diagonalizable. In fact,
we have $B_{R}=RUR^{*}$, where $U$ is a diagonal matrix with the
unit eigenvalues, as well as $RR^{*}=I$. 

Second, consider the eigenvectors of non-unit eigenvalues. Suppose
$B\mathbf{v}=\lambda_{1}\mathbf{v}$ and $B\mathbf{u}=\lambda_{2}\mathbf{u}$.
Since every left eigenvector is orthogonal to a right eigenvector
of a different eigenvalue, we have $\mathbf{v}^{T}P\mathbf{u}=0$
whenever $\lambda_{1}\neq\lambda_{2}$. We refer to this property
as \emph{$P$-orthogonality}. In particular, if $\lambda_{1}$ is
a simple eigenvalue then $\mathbf{v}$ will be $P$-orthogonal to
every other eigenvector. Now let $Q$ be a matrix whose columns are
a maximal set of right eigenvectors corresponding to the non-unit
eigenvalues, and let $B_{Q}$ be the restriction of $B$ to the space
spanned by the columns of $Q$. If our conjecture on the simplicity
of outer eigenvalues holds, we will have $B_{Q}=QVQ^{T}P$, where
$V$ is a diagonal matrix containing all the non-unit eigenvalues.
In addition, the columns of $Q$ can be chosen such that $Q^{T}PQ=I$,
and in this case we say $Q$ is $P$-orthogonal.

In all, when the conjecture on the simplicity of outer eigenvalues
is true, we can write\begin{equation}\label{eqn:diagonalization}
B = Z \Lambda Z^{-1} = 
\left(
\begin{array}{c;{2pt/2pt}c}
  Q & R
\end{array}
\right)
\times
\left(
\begin{array}{c;{2pt/2pt}c}
  V & \\
  \hdashline[2pt/2pt] & U \\
\end{array}
\right)
\times
\left(
\begin{array}{c}
  Q^T P \\
  \hdashline[2pt/2pt] R^*
\end{array}
\right)
,
\end{equation}where $Q^{T}PQ=I$, $R^{*}R=I$, $Q^{T}PR=0$, and $R^{*}Q=0$.

\section{\label{sec:application}Application: the Perron eigenvalue after
node addition}

In \cite{torres2020}, the authors investigated the following question
relating to the Perron eigenvalue of the NB-matrix of a graph undergoing
node addition.\footnote{Reference \cite{torres2020} is stated in terms of node removal, but
all the arguments therein apply to the present setting of node addition
as well.} Let $G$ be a graph and add a new node $c$ to form a new graph $G^{c}$;
see Figure \ref{fig:add-node}. Let $B,B^{c}$ be the corresponding
NB-matrices, and $\lambda,\lambda_{c}$ be the corresponding Perron
eigenvalues. In \cite{torres2020}, the authors use heuristics to
bound the difference $\lambda_{c}-\lambda$, which they call \emph{the
eigen-drop, }and develop algorithms exploiting these heuristics to
find the node that generates the largest difference. Importantly,
their arguments depend on the diagonalizability of $B$ and $B^{c}$,
which has been established here in previous sections. Our present
goal is to rigorously show that $\lambda_{c}>\lambda$, a fact that
was only assumed in \cite{torres2020}.

\begin{figure}
\begin{centering}
\includegraphics[scale=0.9]{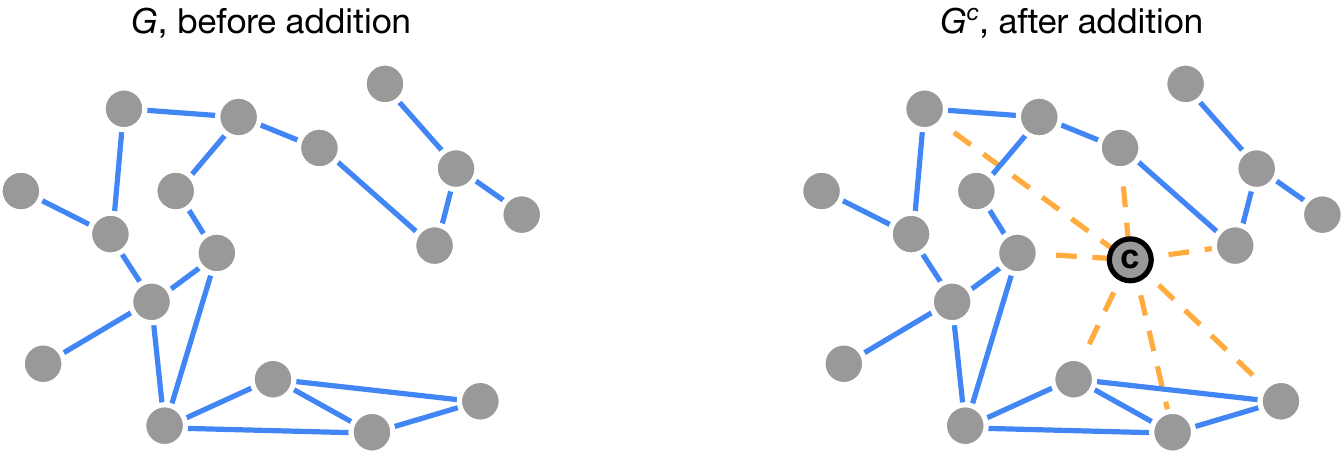}
\par\end{centering}
\begin{flalign*}
\quad\quad\quad\quad\quad
\left( \begin{array}{ccc}  &   & \\  & B & \\  &   & \\   \end{array} \right)
\quad\quad\quad\quad\quad\quad\quad\quad\quad\quad\quad
B^c = \left( \begin{array}{c;{2pt/2pt}c} \begin{array}{ccc}  &   & \\
& B & \\  &   & \\   \end{array}  & D \\  \hdashline[2pt/2pt] E & F \\ \end{array} \right) \hspace{-2.5em} \begin{tabular}{l} $\left.\phantom{\begin{array}{c} \\ D \\ \\ \end{array}}\right\} 2m $ \\ $\left.\phantom{\begin{array}{c} F \end{array}}\right\} 2d $ \end{tabular}
\end{flalign*}

\caption{\textbf{\label{fig:add-node}Top:} Construction of $G^{c}$ from $G$
by adding a new node $c$ with degree $d$. All edges incident to
$c$ are in dashed yellow lines. \textbf{Bottom:} corresponding NB-matrices.
Adapted with permission from \cite{torres2020}.}
\end{figure}

We quickly recall some of the necessary results from \cite{torres2020}.
Due to space limitations we do not reproduce the proofs here. In what
follows, let $G$ have $n$ nodes and $m$ edges. Construct $G^{c}$
by adding a new node $c$ of degree $d$ to $G$. Accordingly, $B$
is a square matrix of size $2m$ and $B^{c}$ is a square matrix of
size $2m+2d$. We can write $B^{c}$ in block form as shown in the
bottom right of Figure \ref{fig:add-node}, where $B$ is the NB-matrix
of the original graph, and $F$ is indexed in the rows and columns
by yellow edges. Accordingly, $D$ is indexed in the rows by blue
edges and in the columns by yellow edges, and vice versa for $E$.
Note that all of $B,D,E,F$ are sub-matrices of $B^{c}$ and thus
we know their general element is given by Equation (\ref{eqn:nbm}).
In \cite{torres2020} it was established that $F^{2}=0$ and $DE=0$.
Now define $X\coloneqq DFE$ and note $X_{k\to l,i\to j}=a_{ck}a_{cj}(1-\delta_{jk})$.
Following the top right of Figure \ref{fig:add-node}, $X$ is a binary
matrix that keeps track of NB-walks that consist of four edges of
colors blue-yellow-yellow-blue. Note these are precisely those paths
formed by the addition of the new node $c$ and thus $X$ will be
essential to our discussion. Finally, in \cite{torres2020} it was
also shown that 
\[
\det\left(B^{c}-tI\right)=t^{2d}\det\left(B-tI+\frac{X}{t^{2}}\right),
\]
whenever $t$ is not an eigenvalue of $F$, i.e. whenever $t\neq0$
since $F$ is nilpotent.

Now define $Y(t)\coloneqq\left(B-tI\right)^{-1}$ and factor it to
get
\begin{equation}
\det\left(B^{c}-tI\right)=t^{2d}\det\left(B-tI\right)\det\left(I+\frac{Y(t)X}{t^{2}}\right).\label{eqn:yx}
\end{equation}
Here, $Y(t)$ is called the \emph{resolvent }of $B$ and it is of
utmost importance to the theory of random matrices, where its trace
is called the \emph{Stjelties transform }of $B$. Observe from Equation
(\ref{eqn:yx}) that every non-zero eigenvalue $t$ of $B^{c}$ that
is not an eigenvalue of $B$ must satisfy that $\det\left(I+\frac{Y(t)X}{t^{2}}\right)=0$
and therefore $-t^{2}$ must be an eigenvalue of $Y(t)X$. In the
following lines, we give $Y(t)$ a suitable form, which will then
allow us to show that there exists a real eigenvalue of $B^{c}$,
namely its Perron eigenvalue $\lambda_{c}$, such that $\lambda_{c}>\lambda$.
We proceed in several steps:
\begin{enumerate}
\item Use the assumption of diagonalizability of $B$ to rewrite its resolvent
$Y(t)$.
\item Apply the Perron-Frobenius theorem to $Y(t)X$ to find its Perron
eigenvalue $y(t)$.
\item Define the auxiliary matrix $H\coloneqq H(t)$ which also has $y(t)$
as an eigenvalue.
\item Apply Gershgorin's Disk theorem to $H$ to show that at some $t_{0}$
it holds that $y(t_{0})=-t_{0}^{2}$, as desired. This $t_{0}$ will
in fact be $\lambda_{c}$, the Perron eigenvalue of $B^{c}$.
\end{enumerate}

\paragraph{Step 1: Rewriting the resolvent.}

Suppose $B$ is diagonalizable with $R$ a matrix of right eigenvectors
as columns and $L$ a matrix with left eigenvectors as rows.\footnote{Do not confuse this $R$ matrix with that used in Section \ref{sec:diagonalizability}.
In fact, if our conjecture on outer eigenvalues is true, the $R$
matrix used in this Section takes the form of the $Z$ matrix in Section
\ref{sec:diagonalizability}, and $L$ becomes $Z^{-1}$.} Define $T$ as the diagonal matrix with $T_{ii}=T_{ii}(t)\coloneqq1/\sqrt{\lambda_{i}-t}$
for $i=1,\ldots,2m$ where $\lambda_{i}$ are the eigenvalues of $B$
sorted according to decreasing modulus; we continue referring to $\lambda_{1}$
as simply $\lambda$. If two eigenvalues have the same modulus, sort
them arbitrarily. Then we can write 
\begin{equation}
Y(t)=\left(B-tI\right)^{-1}=\left(L\Lambda R-tI\right)^{-1}=R\left(\Lambda-tI\right)^{-1}L=RT^{2}L=\sum_{i}\frac{\mathbf{v}_{i}^{R}\mathbf{v}_{i}^{L}}{\lambda_{i}-t},
\end{equation}
where $\Lambda$ contains all eigenvalues in order, and $\mathbf{v}_{i}^{R},\mathbf{v}_{i}^{L}$
are right and left eigenvectors corresponding to $\lambda_{i}$, respectively,
chosen such that $\mathbf{v}_{i}^{L}\mathbf{v}_{i}^{R}=1$\@. As
mentioned above, we are looking for an eigenvalue of $Y(t)X=RT^{2}LX$
that equals $-t^{2}$. In what follows we drop the dependence on $t$
when possible for ease of notation.
\begin{lem}
[\bf{Step 2: Apply the Perron-Frobenius theorem}]\label{lem:yx-negative}Fix
$t$ with $|t|>\lambda$ and let $\rho(t)$ be the spectral radius
of $Y(t)X$. Then $Y(t)X$ has a simple real negative eigenvalue $y(t)$
such that $y(t)=-\rho(t)$.
\end{lem}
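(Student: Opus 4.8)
The plan is to exploit the nonnegativity that the resolvent $-Y(t)$ acquires once $t$ passes the Perron eigenvalue. Throughout I take $t$ real with $t>\lambda$, the only regime relevant here. First I would record the sign structure of the two factors of $Y(t)X$. Since $B$ is a nonnegative matrix and, by the remark following Theorem \ref{thm:leading}, irreducible, for real $t>\lambda=\rho(B)$ the Neumann series $(tI-B)^{-1}=\sum_{k\ge0}t^{-(k+1)}B^{k}$ converges, and its $(p,q)$ entry is positive exactly when $q$ is reachable from $p$ in the digraph of $B$; irreducibility forces every entry to be strictly positive, so $-Y(t)=(tI-B)^{-1}>0$ entrywise. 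On the other factor, $X_{k\to l,i\to j}=a_{ck}a_{cj}(1-\delta_{jk})\ge0$, so $X\ge0$. Hence $M:=-Y(t)X$ is entrywise nonnegative, and because $Y(t)X=-M$ the eigenvalues of $Y(t)X$ are exactly the negatives of those of $M$; in particular $\rho(M)=\rho(t)$.

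Next I would apply the Perron--Frobenius theorem to $M\ge0$: its spectral radius $\rho(M)=\rho(t)$ is an eigenvalue of $M$, so $-\rho(t)$ is an eigenvalue of $Y(t)X$. To guarantee it is strictly negative I must check $\rho(t)>0$. Let $S$ be the set of oriented edges whose target is a neighbour of $c$. For $q=i\to j\in S$ the $q$-th column of $X$ is nonnegative and nonzero provided $c$ has a neighbour $k\neq j$, i.e. provided $d\ge2$ (the standing assumption; if $d=1$ then $X=0$ and $\lambda_{c}=\lambda$). Multiplying this nonzero nonnegative column on the left by the strictly positive matrix $-Y(t)$ makes every entry of the $q$-th column of $M$ strictly positive; in particular $M_{qq}>0$, whence $\rho(M)\ge M_{qq}>0$ by monotonicity of the spectral radius over principal submatrices. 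Thus $y(t):=-\rho(t)$ is a real, strictly negative eigenvalue of $Y(t)X$.

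The main obstacle is simplicity, and here the low rank of $X$ is exactly what is needed. The computation above shows that every column of $M$ indexed outside $S$ vanishes, so $M=M\,\Pi_{S}$, where $\Pi_{S}$ is the coordinate projection onto $S$; consequently $\Pi_{S}M=\Pi_{S}M\Pi_{S}$. Invoking the standard fact that $AB$ and $BA$ have the same nonzero eigenvalues with equal algebraic multiplicities (their characteristic polynomials differ only by a power of $\mu$), with $A=M$ and $B=\Pi_{S}$, identifies the nonzero eigenvalues of $M$, with multiplicity, with those of the principal submatrix $M[S,S]$. But the previous paragraph gives $M_{pq}>0$ for every $p$ and every $q\in S$, so $M[S,S]$ is strictly positive, hence primitive, and Perron--Frobenius makes its spectral radius a simple eigenvalue. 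Since $\rho(M[S,S])=\rho(t)>0$ is a nonzero eigenvalue, its algebraic multiplicity in $M$ is also $1$, and therefore $y(t)=-\rho(t)$ is a simple eigenvalue of $Y(t)X$. The two facts the argument cannot dispense with are the irreducibility of $B$ (for the strict positivity of the resolvent, and hence of $M[S,S]$) and the assumption $d\ge2$ (for $X\neq0$); I would flag both explicitly.
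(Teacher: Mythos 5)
Your proof is correct and follows essentially the same route as the paper's: the Neumann series gives the sign structure of $Y(t)X$, the nonzero eigenvalues are identified with those of the strictly signed principal submatrix on the nonzero columns of $X$ (your $AB$/$BA$ trick is just the paper's column-reordering into the block form $\left(\begin{smallmatrix} Y_{1} & 0\\ Y_{2} & 0 \end{smallmatrix}\right)$), and Perron--Frobenius applied to that strictly positive submatrix yields the simple eigenvalue $y(t)=-\rho(t)$. Your explicit flagging of two hypotheses the paper leaves implicit --- that $t$ should be real with $t>\lambda$ (for complex $t$ with $|t|>\lambda$ the entries of $Y(t)X$ need not be negative reals), and that $d\ge 2$ so that $X\neq 0$ --- is a worthwhile tightening, but it does not change the underlying argument.
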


\begin{proof}
Since $|t|>\lambda_{1}$, we can use the Neumann series to inspect
each entry of $YX$: for any two oriented edges $e_{1},e_{2}$ we
have
\begin{equation}
\left(YX\right){}_{e_{1}e_{2}}=-\sum_{k=0}^{\infty}\frac{1}{t^{k+1}}\left(B^{k}X\right)_{e_{1}e_{2}}.
\end{equation}

Since the graph is connected, for each entry $e_{1}e_{2}$ there exists
a $k$ such that $\left(B^{k}\right)_{e1e2}$ is positive. Therefore,
$\left(YX\right)_{e_{1}e_{2}}$ is negative unless every element in
the $e_{2}$ column of $X$ is zero. Thus, $YX$ is non-positive.
Furthermore, after reordering its columns, $YX$ has the block form
\begin{equation}
YX=\left(\begin{array}{cc}
Y_{1} & 0\\
Y_{2} & 0
\end{array}\right),
\end{equation}
for some square matrix $Y_{1}$ and rectangular matrix $Y_{2}$. This
implies that the eigenvalues of $YX$ are equal to the eigenvalues
of $Y_{1}$. But the entries of $Y_{1}$ are all strictly negative,
thus the Perron-Frobenius theorem implies that there is a negative
real number $y=y(t)$ such that it is a simple eigenvalue of $YX$
equal to $-\rho$.
\end{proof}

\paragraph*{Step 3: Define the auxiliary matrix.}

For the purpose of bounding $y$, we consider the matrix $H=H(t)=TLXRT.$
Note that $H$ and $YX$ are cyclic permutations of the same
matrix product and therefore they have the same eigenvalues. In particular
$y$ is an eigenvalue of $H$, for each $t$. 
\begin{thm}
[\bf{Step 4: Apply Gershgorin's Disk theorem}] \label{thm:increase}There
exists a real number $\lambda_{c}$ with $\lambda_{c}>\lambda$ such
that $-\lambda_{c}^{2}$ is an eigenvalue of $Y(\lambda_{c})X$ and
$\lambda_{c}$ is an eigenvalue of $B^{c}$.
\end{thm}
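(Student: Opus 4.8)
The plan is to locate $\lambda_c$ as a zero of the real function
\[
g(t) := \rho(t) - t^2, \qquad t \in (\lambda, \infty),
\]
where $\rho(t) = -y(t)$ is the spectral radius of $Y(t)X$ supplied by Lemma \ref{lem:yx-negative}. On this interval $t$ exceeds the spectral radius of $B$, so $B - tI$ is invertible and every entry of $Y(t)X$ — and hence of the auxiliary matrix $H(t) = TLXRT$ — is a real-analytic function of $t$. A zero $t_0$ of $g$ gives exactly what is wanted: $y(t_0) = -t_0^2$ means $-t_0^2$ is an eigenvalue of $Y(t_0)X$, so $\det\bigl(I + Y(t_0)X/t_0^2\bigr) = 0$; since $t_0 > \lambda$ is not an eigenvalue of $B$ we have $\det(B - t_0 I) \neq 0$, and Equation (\ref{eqn:yx}) then forces $\det(B^c - t_0 I) = 0$. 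Thus $\lambda_c := t_0$ is an eigenvalue of $B^c$ with $\lambda_c > \lambda$. It remains only to produce the zero via the intermediate value theorem, which requires continuity of $\rho$ together with control of its two boundary limits.

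Continuity of $\rho$ on $(\lambda, \infty)$ follows because $y(t)$ is, by Lemma \ref{lem:yx-negative}, a \emph{simple} eigenvalue, and simple eigenvalues depend continuously (indeed analytically) on the matrix entries. The right-hand limit is easy: as $t \to \infty$ we have $Y(t) = (B - tI)^{-1} = -t^{-1}I + O(t^{-2})$, so $Y(t)X \to 0$ entrywise and $\rho(t) \to 0$, whence $g(t) = \rho(t) - t^2 \to -\infty$. The left-hand limit is where the work lies, and I would handle it through $H(t)$ and Gershgorin's disk theorem.

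For the left endpoint, write $H(t)_{ij} = (LXR)_{ij}/\bigl(\sqrt{\lambda_i - t}\,\sqrt{\lambda_j - t}\bigr)$, so the leading diagonal entry is $H(t)_{11} = (LXR)_{11}/(\lambda - t)$. Because the Perron left and right eigenvectors of $B$ are entrywise positive and $X$ is a nonzero nonnegative matrix, $(LXR)_{11} = \mathbf{v}_1^{L} X \mathbf{v}_1^{R} > 0$; hence $H(t)_{11} < 0$ and $H(t)_{11} \to -\infty$ as $t \to \lambda^+$, growing like $1/(\lambda - t)$. The crux is a comparison of growth rates: every off-diagonal entry in the first row carries only a single factor $1/\sqrt{\lambda - t}$ (the other factor $1/\sqrt{\lambda_j - t}$ stays bounded since $\lambda_j \neq \lambda$), so the Gershgorin radius $r_1(t) = \sum_{j \neq 1} |H(t)_{1j}|$ grows only like $1/\sqrt{\lambda - t}$. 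Therefore $|H(t)_{11}|$ eventually dominates $r_1(t)$, the first Gershgorin disk detaches from the others and recedes to $-\infty$ along the negative axis, and it must then contain exactly one eigenvalue of $H(t)$. Matching this eigenvalue with the unique eigenvalue of largest modulus identified in Lemma \ref{lem:yx-negative} gives $y(t) \to -\infty$, i.e. $\rho(t) \to +\infty$ and $g(t) \to +\infty$ as $t \to \lambda^+$.

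With $g$ continuous on $(\lambda, \infty)$, tending to $+\infty$ at the left and $-\infty$ at the right, the intermediate value theorem yields $t_0 \in (\lambda, \infty)$ with $g(t_0) = 0$, completing the argument as described above. The hard part will be the Gershgorin step: one must make rigorous that the blow-up of the dominant diagonal entry at rate $1/(\lambda - t)$ genuinely outpaces the weaker $1/\sqrt{\lambda - t}$ blow-up of the first-row radius, and that the resulting detached disk captures \emph{precisely} the Perron eigenvalue $y(t)$ rather than a stray eigenvalue — this is exactly where the simplicity and sign information from Lemma \ref{lem:yx-negative} are indispensable.
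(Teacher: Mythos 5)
Your proposal is correct and follows essentially the same route as the paper's own proof: both reduce the claim to finding a crossing $y(t)=-t^{2}$ via Equation (\ref{eqn:yx}), both use the simplicity and negativity of $y(t)$ from Lemma \ref{lem:yx-negative} together with continuity, and both establish the left-endpoint blow-up by applying Gershgorin's disk theorem to $H(t)=TLXRT$, comparing the $1/(\lambda-t)$ growth of $H_{11}$ (positive $\alpha_{11}$ by Perron--Frobenius) against the $1/\sqrt{t-\lambda}$ growth of the first-row radius, with the $t\to\infty$ limit handled by the decay of $Y(t)X$. The only differences are cosmetic (you package the intermediate value argument as the function $g(t)=\rho(t)-t^{2}$, whereas the paper tracks the position of the detached disk relative to $-t^{2}$), and if anything your insistence on strict positivity of $\alpha_{11}$ makes the detachment step slightly more careful than the paper's ``non-negative.''
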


\begin{proof}
Put $r_{i}\coloneqq\sum_{j\neq i}\left|H_{ij}\right|$ and define
the $i^{th}$ Gershgorin disk as $D_{i}\coloneqq\left\{ z:\left|z-H_{ii}\right|\leq r_{i}\right\} $.
(Note here that both the center and the radius of each disk $D_{i}$
are changing as a function of $t$.) Gershgorin's disk theorem says
that all eigenvalues of $H$ must be contained in the union of all
$D_{i}$. Furthermore, a strengthened version of the theorem says
that if one of the disks is isolated from the rest, then it must contain
exactly one eigenvalue. To prove the existence of $\lambda_{c}$,
we proceed in three steps, as illustrated in Figure \ref{fig:disks}:
\begin{enumerate}
\item First we show that for some small $\epsilon>0$, at $t=\lambda+\epsilon$,
$D_{1}$ is disjoint from all other circles and that it must contain
$y$, the least eigenvalue of $H$. 
\item Second, also at $t=\lambda+\epsilon$, we prove that every real number
in $D_{1}$ is less than $-t^{2}$.
\item Finally, we show that as $t$ goes to $\infty$, every number inside
each $D_{i}$ must be smaller in magnitude than $-t^{2}$. 
\end{enumerate}
Since $y$ is a real continuous function of $t$, these three assertions
imply that at some point $\lambda_{c}$ in $[\lambda+\epsilon,+\infty)$,
we must have $y(\lambda_{c})=-\lambda_{c}^{2}$, and therefore the
theorem follows. We address all three claims in turn with the following
inequalities. Write $\alpha_{ij}\coloneqq\mathbf{v}_{i}^{L}X\mathbf{v}_{j}^{R}$
such that we can write 
\begin{equation}
D_{i}=\left\{ z:\left|z-H_{ii}\right|<r_{i}\right\} =\left\{ z:\left|z-\frac{\alpha_{ii}}{\lambda_{i}-t}\right|\le\frac{1}{\sqrt{t-\lambda_{i}}}\sum_{j\neq i}\left|\frac{\alpha_{ij}}{\sqrt{t-\lambda_{j}}}\right|\right\} .
\end{equation}

For step (1), consider $D_{1}$ when $t$ approaches $\lambda$ from
the right. Write $H_{11}+\delta r_{1}$ for an arbitrary number inside
$D_{1}$ where $\delta$ is a complex number with $\left|\delta\right|\le1$.
Similarly, write $H_{ii}+\delta'r_{i}$ for an arbitrary element in
$D_{i}$, $i\neq1$, where $\delta$ is a complex number with $\left|\delta'\right|\le1$.
Then we have 
\begin{equation}
\left|H_{11}+\delta r_{1}\right|=\left|\frac{\alpha_{11}}{\lambda_{1}-t}+\frac{\delta}{\sqrt{t-\lambda_{1}}}\sum_{j\neq1}\left|\frac{\alpha_{1j}}{\sqrt{t-\lambda_{j}}}\right|\right|\ge\left|\frac{\alpha_{ii}}{\lambda_{i}-t}+\frac{\delta'}{\sqrt{t-\lambda_{i}}}\sum_{j\neq i}\left|\frac{\alpha_{ij}}{\sqrt{t-\lambda_{j}}}\right|\right|=\left|H_{ii}+\delta'r_{i}\right|.
\end{equation}
The inequality holds regardless of $\delta,\delta'$ when $\lambda_{1}\leftarrow t$
and therefore $D_{1}$ is disjoint from all other disks. Since $y$
is the least eigenvalue of $H$, $D_{1}$ contains $y$ and no other
eigenvalue of $H$.

For (2), consider an arbitrary real number inside $D_{1},$ namely
$H_{11}+\delta r_{1}$ for some real $\delta\in[-1,1]$. We have

\begin{equation}
H_{11}+\delta R_{1}=\frac{\alpha_{11}}{\lambda_{1}-t}+\frac{\delta}{\sqrt{t-\lambda_{1}}}\sum_{j\neq1}\left|\frac{\alpha_{1j}}{\sqrt{t-\lambda_{j}}}\right|\leq-t^{2},
\end{equation}
which holds when $t$ is sufficiently close to, but larger than, $\lambda_{1}$
and whenever $\alpha_{11}$ is non-negative. But applying the Perron-Frobenius
theorem on $B$ implies that $\mathbf{v}_{1}^{R}$ and $\mathbf{v}_{1}^{L}$
are both strictly positive and therefore $\alpha_{11}=\mathbf{v}_{1}^{L}X\mathbf{v}_{1}^{R}$
is non-negative.\footnote{This $\alpha_{11}$ is what the authors of \cite{torres2020} called
the \emph{X-non-backtracking centrality} of the newly added node $c$.}

For (3), when $t\to\infty$ we have 
\begin{equation}
-t^{2}\leq\frac{\alpha_{ii}}{t-\lambda_{i}}+\frac{\delta}{\sqrt{t-\lambda_{i}}}\sum_{j\neq i}\left|\frac{\alpha_{ij}}{\sqrt{t-\lambda_{j}}}\right|,
\end{equation}
for each real $\delta\in[-1,1]$. This finishes the proof.
\end{proof}
This Theorem establishes a weak version of eigenvalue interlacing
for the NB-matrix. Indeed, after adding (or removing) the rows and
columns incident to the same node $c$, the Perron eigenvalue behaves
as expected: it can only increase when a new node is added to the
graph, and it can only decrease when a node is removed from the graph.
However, the other eigenvalues do not seem to behave similarly. It
remains an open question if more general versions of interlacing apply
to the NB-matrix.

\begin{figure}
\begin{centering}
\includegraphics{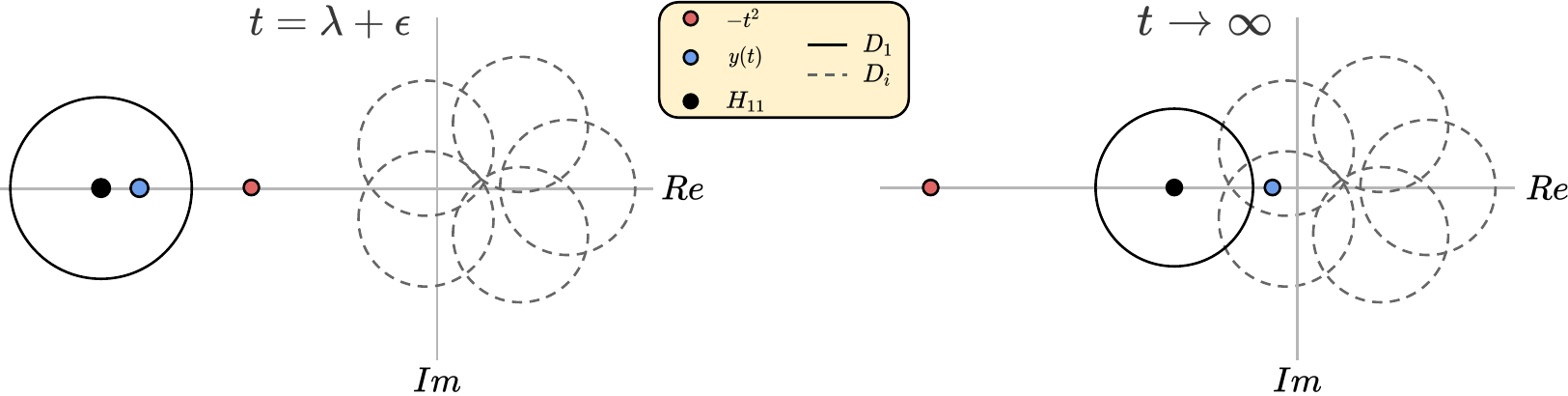}
\par\end{centering}
\caption{\emph{\label{fig:disks}}\textbf{Left:}\emph{ }when $t\!=\!\lambda\!+\!\epsilon$,
$y(t)$ lies inside $D_{1}$ which in turn lies to the left of $-t^{2}$
and is disjoint from the rest. \textbf{Right:} when $t\to\infty$,
$y(t)$ lies in some of the $D_{i}$, all of which lie to the right
of $-t^{2}$.}
\end{figure}

\section*{Acknowledgements}

This work was supported by NSF IIS-1741197. L.T.~thanks Gabor Lippner for many invaluable conversations, and Blevmore Labs for consulting services regarding the construction
Figures \ref{fig:nbm-doodle} and \ref{fig:bbt}.

\bibliographystyle{plain}
\bibliography{references}

\begin{thebibliography}{10}

\bibitem{batagelj2003m}
Vladimir Batagelj and Matjaz Zaversnik.
\newblock An {O}(m) algorithm for cores decomposition of networks.
\newblock {\em arXiv preprint cs/0310049}, 2003.

\bibitem{batageljZ11}
Vladimir Batagelj and Matjaz Zaversnik.
\newblock Fast algorithms for determining (generalized) core groups in social
  networks.
\newblock {\em Adv. Data Analysis and Classification}, 5(2):129--145, 2011.

\bibitem{bordenave2015}
Charles Bordenave, Marc Lelarge, and Laurent Massouli\'{e}.
\newblock Non-backtracking spectrum of random graphs: community detection and
  non-regular {R}amanujan graphs.
\newblock In {\em Proc. of the 56th {IEEE} {A}nnual {S}ymposium on
  {F}oundations of {C}omputer {S}cience ({FOCS})}, pages 1347--1357, 2015.

\bibitem{cvetkovic2007signless}
Drago{\v{s}} Cvetkovi{\'c}, Peter Rowlinson, and Slobodan~K Simi{\'c}.
\newblock Signless laplacians of finite graphs.
\newblock {\em Linear Algebra and its Applications}, 423(1):155--171, 2007.

\bibitem{rowlinson2004}
Drago\v{s} Cvetkovi\'{c}, Peter Rowlinson, and Slobodan Simic.
\newblock {\em Spectral generalizations of line graphs: On graphs with least
  eigenvalue -2}.
\newblock London Mathematical Society Lecture Note Series. Cambridge University
  Press, 2004.

\bibitem{simic2015}
Drago\v{s} Cvetkovi\'{c}, Peter Rowlinson, and Slobodan Simi\'{c}.
\newblock Graphs with least eigenvalue -2: Ten years on.
\newblock {\em Linear Algebra and its Applications}, 484:504 -- 539, 2015.

\bibitem{durfee2015distinguishing}
Christina Durfee and Kimball Martin.
\newblock Distinguishing graphs with zeta functions and generalized spectra.
\newblock {\em Linear Algebra Appl.}, 481:54--82, 2015.

\bibitem{erdos2012}
L\'{a}szl\'{o} Erd\"{o}s, Antti Knowles, Horng-Tzer Yau, and Jun Yin.
\newblock Spectral statistics of {E}rd\"os-{R}\'enyi graphs {II}: {E}igenvalue
  spacing and the extreme eigenvalues.
\newblock {\em Comm. Math. Phys.}, 314(3):587--640, 2012.

\bibitem{godsil2013algebraic}
Chris Godsil and Gordon~F Royle.
\newblock {\em Algebraic graph theory}, volume 207.
\newblock Springer Science \& Business Media, 2013.

\bibitem{Grindrod2018}
Peter Grindrod, Desmond~J. Higham, and Vanni Noferini.
\newblock The deformed graph {L}aplacian and its applications to network
  centrality analysis.
\newblock {\em SIAM J. Matrix Anal. Appl.}, 39(1):310--341, 2018.

\bibitem{horn2012matrix}
Roger~A Horn and Charles~R Johnson.
\newblock {\em Matrix analysis}.
\newblock Cambridge University Press, 2012.

\bibitem{kempton2016non}
Mark Kempton.
\newblock Non-backtracking random walks and a weighted {I}hara's theorem.
\newblock {\em arXiv preprint arXiv:1603.05553}, 2016.

\bibitem{Kotani2000}
Motoko Kotani and Toshikazu Sunada.
\newblock Zeta functions of finite graphs.
\newblock {\em J. Math. Sci. Univ. Tokyo}, 7(1):7--25, 2000.

\bibitem{martin2014localization}
Travis Martin, Xiao Zhang, and M.~E.~J. Newman.
\newblock Localization and centrality in networks.
\newblock {\em Phys. Rev. E}, 90:052808, Nov 2014.

\bibitem{merris1994laplacian}
Russell Merris.
\newblock Laplacian matrices of graphs: A survey.
\newblock {\em Linear algebra and its applications}, 197:143--176, 1994.

\bibitem{northshield1998}
Sam Northshield.
\newblock A note on the zeta function of a graph.
\newblock {\em J. Combin. Theory Ser. B}, 74(2):408--410, 1998.

\bibitem{petersen2012matrix}
Kaare~Brandt Petersen and Michael~Syskind Pedersen.
\newblock The matrix cookbook.
\newblock Technical report, Technical University of Denmark, November 2012.
\newblock Version 20121115,
  \url{http://www2.compute.dtu.dk/pubdb/pubs/3274-full.html}.

\bibitem{stark1996zeta}
Harold~M Stark and Audrey~A Terras.
\newblock Zeta functions of finite graphs and coverings.
\newblock {\em Advances in Mathematics}, 121(1):124--165, 1996.

\bibitem{tao2017}
Terence Tao and Van Vu.
\newblock Random matrices have simple spectrum.
\newblock {\em Combinatorica}, 37(3):539--553, 2017.

\bibitem{torres2020}
Leo Torres, Kevin~S Chan, Hanghang Tong, and Tina Eliassi-Rad.
\newblock Node immunization with non-backtracking eigenvalues.
\newblock {\em arXiv preprint arXiv:2002.12309}, 2020.

\bibitem{zachary1977information}
Wayne~W Zachary.
\newblock An information flow model for conflict and fission in small groups.
\newblock {\em Journal of anthropological research}, 33(4):452--473, 1977.

\end{thebibliography}

\appendix
\counterwithin{thm}{section}

\section{Eigenvalues and eigenvectors\label{app:eigen}}

Let $B$ be an arbitrary square matrix, and $\lambda,\mathbf{v}$
be such that $B\mathbf{v}=\lambda\mathbf{v}$. Here, $\lambda$ is
called an \emph{eigenvalue }of $B$ and $\mathbf{v}$ a \emph{right
eigenvector }of $B$. If, on the other hand, we have $\mathbf{v}^{T}B=\lambda\mathbf{v}^{T}$
then $\mathbf{v}^{T}$ is called a \emph{left eigenvector} of $B$.
The \emph{characteristic polynomial} of $B$ is $\det\left(B-tI\right)$
and its roots are the eigenvalues of $B$. The \emph{algebraic multiplicity}
of $\lambda$, denoted $AM(\lambda)$, is the multiplicity of $\lambda$
as a root of the characteristic polynomial of $B$. The \emph{right
(or left) eigenspace} of an eigenvalue $\lambda$ is the linear subspace
spanned by all right (left) eigenvectors corresponding to $\lambda$.
The \emph{geometric multiplicity} of $\lambda$, $GM(\lambda)$, is
the dimension of the corresponding right eigenspace. It holds that
$AM(\lambda)\geq GM(\lambda)$, and when the inequality is strict,
$\lambda$ is called \emph{defective}. $B$ is \emph{diagonalizable
}when it can be written as $B=U\Lambda U^{-1}$, with $\Lambda$ a
diagonal matrix. Equivalently, $B$ is diagonalizable if the two multiplicities
of every eigenvalue coincide. $B$ is \emph{invertible} when $0$
is not an eigenvalue. $B$ is \emph{normal} if $BB^{*}=B^{*}B$, where
$B^{*}=\bar{B}^{T}$ is the conjugate transpose. The celebrated \emph{spectral
theorem} says that a matrix is diagonalizable by a unitary transformation,
that is $B=UDU^{*}$ with $UU^{*}=I$, if and only if $B$ is normal.

\section{$k$-cores\label{app:kcore}}

For an arbitrary graph $G$ and an integer $k$, the $k$-core of
$G$ is the maximal induced subgraph of $G$ where each node has degree
at least $k$. The $k$-core of $G$ can be obtained using the following
algorithm. First identify all the nodes whose degree is less than
$k$, and remove them from $G$. After this removal, the degree of
some other nodes may have dropped below $k$. Keep removing nodes
of degree less than $k$ until there are none. The resulting graph
is the $k$-core \cite{batagelj2003m,batageljZ11}. 

The NB-eigenvalues are tightly related to the $2$-core of $G$. The
$1$-shell of $G$ is made up of the nodes and edges not in the $2$-core,
i.e. all those nodes removed at some step in the aforementioned algorithm.
The $1$-shell of $G$ is always a forest, and, as such, it allows
the following decomposition. All nodes of degree $1$ in $G$, i.e.
the \emph{leaves}, as well as all edges incident to them, form the
$1^{st}$ layer of the $1$-shell. Identify all those nodes whose
degree drops to $1$ after removing the nodes and edges in the $1^{st}$
layer. These nodes, and the remaining edges incident to them, form
the $2^{nd}$ layer of the $1$-shell. The remaining layers of the
$1$-shell are defined inductively. In this way, every node and edge
in the $1$-shell belongs to exactly one of its layers. These definitions
imply, for example, that a tree has empty $2$-core, a tree is equal
to the subgraph induced by its $1$-shell, that $2$-cores have no
nodes of degree one, and that graphs in which all nodes have degree
at least $2$ have empty $1$-shell. In the main body we use these
equivalent properties without proof.

Now consider an undirected edge $u-v$ belonging to the $r^{th}$
layer of the $1$-shell. It is always the case that one of its endpoints
belongs to the $r^{th}$ layer and the other belongs to the $\left(r+1\right)^{th}$
layer. Further, if $u$ belongs to the $r^{th}$ layer, we say that
the oriented edge $u\to v$ is \emph{pointing inward}, while $v\to u$
is \emph{pointing outward}. Intuitively, outward edges are pointing
in the direction of the leaves of $G$, while inward edges point in
the direction of the $2$-core. Lastly, note that each oriented edge
is part of at least one NB-cycle if and only if it is inside the $2$-core.

\section{Technical lemmas}
\begin{lem}
\label{lem:nb-in-cent}Assume $B\mathbf{v}=\lambda\mathbf{v}$. Then
for any node $k$ we have 
\begin{equation}
\left(d_{k}-1\right)\nbcent=\lambda\,\incent.
\end{equation}
\end{lem}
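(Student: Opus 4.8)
The plan is to derive the identity directly from the two-directional form of the eigenvalue equation, Equation (\ref{eqn:eigenvector-in-two-directions}), by summing it over all neighbors of the node $k$. First I would fix $k$ and recall that for every neighbor $l$ of $k$ the relation $\lambda\mathbf{v}_{k\to l}+\mathbf{v}_{l\to k}=\nbcent$ holds, where the right-hand side is exactly the into-centrality $\nbcent=\sum_{i}a_{ik}\mathbf{v}_{i\to k}$. The crucial structural observation is that this right-hand side does not depend on the particular neighbor $l$.

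Next I would multiply both sides by $a_{kl}$ and sum over all nodes $l$. The left-hand side splits into two pieces: by the definition of the from-centrality, $\sum_{l}a_{kl}\,\lambda\mathbf{v}_{k\to l}=\lambda\,\incent$, while $\sum_{l}a_{kl}\,\mathbf{v}_{l\to k}=\nbcent$, since summing $\mathbf{v}_{l\to k}$ over the neighbors $l$ of $k$ reassembles precisely the into-centrality (using $a_{kl}=a_{lk}$). The right-hand side becomes $\sum_{l}a_{kl}\,\nbcent=d_{k}\,\nbcent$, because $\nbcent$ is constant in the summation index and $k$ has exactly $d_{k}$ neighbors.

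Combining these gives $\lambda\,\incent+\nbcent=d_{k}\,\nbcent$, which rearranges immediately to $(d_{k}-1)\,\nbcent=\lambda\,\incent$, as claimed. There is no real obstacle in this argument; the only point requiring care is the bookkeeping between the two flavors of centrality. The coefficient $d_{k}-1$ arises precisely because the $l$-independent right-hand side of (\ref{eqn:eigenvector-in-two-directions}) contributes a factor of $d_{k}$ upon summation, whereas the back-edge terms $\mathbf{v}_{l\to k}$ collapse into a single copy of $\nbcent$ that is then absorbed on the other side.
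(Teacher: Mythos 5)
Your proof is correct and is exactly the paper's argument: the paper's own proof reads ``Replacing (\ref{eqn:nb-centrality}) in (\ref{eqn:eigenvector-in-two-directions}) and summing over all neighbors of $k$ we obtain the result,'' which is precisely the substitution-and-summation you carry out. You have simply made explicit the bookkeeping (the $l$-independence of the right-hand side yielding the factor $d_k$, and the back-edges reassembling $\nbcent$) that the paper leaves compressed.
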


\begin{proof}
Replacing (\ref{eqn:nb-centrality}) in (\ref{eqn:eigenvector-in-two-directions})
and summing over all neighbors of $k$ we obtain the result.
\end{proof}
\begin{lem}
\label{lem:non-leaky-unitary}Let $B\text{\ensuremath{\mathbf{v}}=\ensuremath{\lambda\mathbf{v}}}$
with $\lambda\neq0$. $\mathbf{v}$ is non-leaky if and only if $B^{*}B\mathbf{v}=BB^{*}\mathbf{v}=\mathbf{v}$
if and only if $\lambda$ is unitary.
\end{lem}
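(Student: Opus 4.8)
The plan is to read the two biconditionals as a cycle of three implications among the conditions (A) $\mathbf{v}$ is non-leaky, (B) $B^{*}B\mathbf{v}=BB^{*}\mathbf{v}=\mathbf{v}$, and (C) $\lambda$ is unitary, and to discharge each arrow by converting the two operator equations in (B) into pointwise statements about the centralities $\nbcent[l]$ and $\incent$ via Lemma~\ref{lem:bbt}. Throughout I would invoke the standing md$2$ assumption, so that $d_l\ge 2$ for every node $l$; this sign information is exactly what powers the argument, and by Proposition~\ref{pro:reduction-to-2-core} restricting to it costs nothing for a nonzero eigenvalue.

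The first arrow, (A)$\Rightarrow$(B), is purely algebraic. By Lemma~\ref{lem:bbt} one has $(B^{*}B\mathbf{v}-\mathbf{v})_{k\to l}=(d_l-2)\nbcent[l]$, so non-leakiness of $\mathbf{v}$ is literally the statement $B^{*}B\mathbf{v}=\mathbf{v}$. To also get $BB^{*}\mathbf{v}=\mathbf{v}$, i.e. $(d_k-2)\incent=0$ for every $k$, I would split on the degree of $k$: when $d_k=2$ it is automatic, and when $d_k>2$ non-leakiness forces $\nbcent=0$, so the relation $(d_k-1)\nbcent=\lambda\,\incent$ of Lemma~\ref{lem:nb-in-cent} together with $\lambda\neq0$ yields $\incent=0$; this is precisely where the hypothesis $\lambda\neq0$ is needed. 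The arrow (B)$\Rightarrow$(C) is the standard one-line norm computation: from $B^{*}B\mathbf{v}=\mathbf{v}$ we get $|\lambda|^{2}\|\mathbf{v}\|^{2}=\|B\mathbf{v}\|^{2}=\mathbf{v}^{*}B^{*}B\mathbf{v}=\|\mathbf{v}\|^{2}$, hence $|\lambda|=1$.

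The remaining arrow (C)$\Rightarrow$(A) is the crux and the only step that uses more than bookkeeping. The idea is to pair $(B^{*}B-I)\mathbf{v}$ with $\mathbf{v}$. On one side, (C) gives $\mathbf{v}^{*}(B^{*}B-I)\mathbf{v}=(|\lambda|^{2}-1)\|\mathbf{v}\|^{2}=0$. On the other side, using Lemma~\ref{lem:bbt} and grouping the sum over oriented edges by their target node yields the identity
\begin{equation}
\mathbf{v}^{*}(B^{*}B-I)\mathbf{v}=\sum_{k\to l}(d_l-2)\,\nbcent[l]\,\overline{\mathbf{v}_{k\to l}}=\sum_{l}(d_l-2)\,\bigl|\nbcent[l]\bigr|^{2}.
\end{equation}
I expect the positivity step to be the main obstacle, and it is where md$2$ is indispensable: because each summand is nonnegative, a vanishing total forces every term to be zero, so $(d_l-2)\nbcent[l]=0$ for all $l$, which is non-leakiness. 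Without md$2$ the signs of $d_l-2$ are mixed and the conclusion genuinely fails — a unitary eigenvalue on a non-md$2$ graph can carry a leaky eigenvector, since the degree-one nodes contribute negative coefficients. As a free by-product, the same identity shows that md$2$ graphs have no eigenvalue of modulus strictly below $1$, because its right-hand side is always nonnegative; this recovers the fact recalled in Section~\ref{sec:inner}.
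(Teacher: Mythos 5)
Your proof is correct, and its first two arrows are essentially the paper's own: the paper also derives (A)$\Rightarrow$(B) from Lemma \ref{lem:bbt} together with Lemma \ref{lem:nb-in-cent} (invoking $\lambda\neq0$ exactly where you do, though it reads non-leakiness on the ``from'' centralities $\incent$ and transfers to the ``into'' ones, the reverse of your order and arguably less faithful to Definition \ref{def:leaky}), treats (B)$\Rightarrow$(A) as immediate from Lemma \ref{lem:bbt}, and proves (B)$\Rightarrow$(C) by the same computation $\mathbf{v}^{*}B^{*}B\mathbf{v}=\bar{\lambda}\lambda\,\mathbf{v}^{*}\mathbf{v}$. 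Where you genuinely diverge is the crux (C)$\Rightarrow$(A). The paper disposes of it in one line: since $\lambda$ is unitary, ``the restriction of $B$ to the span of $\mathbf{v}$ is a unitary operator and therefore $B^{*}B\mathbf{v}=BB^{*}\mathbf{v}=\mathbf{v}$.'' For a non-normal matrix that step is delicate: $\operatorname{span}(\mathbf{v})$ is $B$-invariant but not $B^{*}$-invariant, and a unit-modulus eigenvalue alone does not force $B^{*}B\mathbf{v}=\mathbf{v}$ (consider a $2\times2$ Jordan block with eigenvalue $1$), so some structure of the NB-matrix and of the graph must enter; notably the paper's one-liner never invokes md$2$, even though the implication is false without it --- on a triangle with one pendant vertex, $\omega=e^{2\pi i/3}$ is an eigenvalue whose eigenvector circulating one way around the triangle and exiting along the pendant edge is leaky, exactly the failure mode you predict. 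Your quadratic-form identity $\mathbf{v}^{*}\left(B^{*}B-I\right)\mathbf{v}=\sum_{l}\left(d_{l}-2\right)\bigl|\nbcent[l]\bigr|^{2}$ supplies precisely the missing structure and makes the role of md$2$ visible, at the cost of a slightly longer argument. It also buys an extra the paper's route does not: as you observe, nonnegativity of the right-hand side immediately gives $|\lambda|\geq1$ for every NB-eigenvalue of an md$2$ graph, recovering the result that Section \ref{sec:inner} imports from \cite{Kotani2000}.
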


\begin{proof}
Since $\mathbf{v}$ is non-leaky, then $\incent=0$ for each $k$.
Since $\mathbf{v}$ is an eigenvector, Lemma \ref{lem:nb-in-cent}
implies that $\nbcent=0$ as well. By Lemma \ref{lem:bbt}, we have
$B^{*}B\mathbf{v}=BB^{*}\mathbf{v}=\mathbf{v}$. The converse is true
by definition. 

Multiply $B\text{\ensuremath{\mathbf{v}}=\ensuremath{\lambda\mathbf{v}}}$
by its conjugate transpose to get $\mathbf{v}^{*}B^{*}B\mathbf{v}=\bar{\lambda}\lambda\mathbf{v}^{*}\mathbf{v}$.
Assuming $B^{*}B\mathbf{v}=BB^{*}\mathbf{v}=\mathbf{v}$, we get $\bar{\lambda}\lambda=1$,
i.e. $\lambda$ is unitary. Now assume $\lambda$ is unitary. Then
the restriction of $B$ to the span of $\mathbf{v}$ is a unitary
operator and therefore $B^{*}B\mathbf{v}=BB^{*}\mathbf{v}=\mathbf{v}$
must hold true.
\end{proof}
\begin{lem}
\label{lem:cycle-degree-2}Let $G$ have minimum degree at least $2$
and suppose $\lambda,\mathbf{v}$ are such that $B\mathbf{v}=\lambda\mathbf{v}$.
Then there must exist a cycle $i_{1}\to i_{2}\to i_{3}\to\ldots\to i_{1}$
such that each $\mathbf{v}_{i_{r}\to i_{r+1}}$ is nonzero.
\end{lem}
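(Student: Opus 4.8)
The plan is to use the eigenvector equation to propagate nonzero coordinates \emph{backward} along non-backtracking steps, producing an arbitrarily long walk whose every edge carries a nonzero value of $\mathbf{v}$, and then to close this walk into a cycle by a pigeonhole argument on the finitely many oriented edges.

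First I would dispose of the eigenvalue $\lambda=0$. Since $G$ is md$2$ its $1$-shell is empty, so the Corollary following Proposition \ref{pro:zero} guarantees that $B$ is invertible; hence $0$ is not an eigenvalue and we may assume $\lambda\neq0$ throughout. This is the only place the minimum-degree hypothesis is needed.

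The key step is a backward-extension rule. Starting from (\ref{eqn:eigenvector-in-two-directions}) and using that the summand $i=l$ cancels the backtrack term (because $a_{lk}=1$), for any pair of neighbors $k,l$ one obtains
\begin{equation}
\lambda\,\mathbf{v}_{k\to l}=\sum_{i}a_{ik}\mathbf{v}_{i\to k}-\mathbf{v}_{l\to k}=\sum_{i\neq l}a_{ik}\mathbf{v}_{i\to k}.
\end{equation}
Consequently, if $\mathbf{v}_{k\to l}\neq0$ then, since $\lambda\neq0$, the right-hand side is nonzero, so there is a neighbor $i\neq l$ of $k$ with $\mathbf{v}_{i\to k}\neq0$. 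In words: every nonzero oriented edge $k\to l$ is preceded by a nonzero oriented edge $i\to k$ that does not form a backtrack with it. I would then build the walk as follows. Because $\mathbf{v}$ is an eigenvector it is nonzero, so some coordinate $\mathbf{v}_{e_{0}}$ is nonzero; applying the rule repeatedly produces oriented edges $e_{0},e_{1},\ldots,e_{2m}$ with every $\mathbf{v}_{e_{t}}\neq0$, such that read in the order $\ldots,e_{2},e_{1},e_{0}$ consecutive edges form non-backtracking steps. Since $G$ has only $2m$ oriented edges, two of these $2m+1$ edges coincide, say $e_{s}=e_{t}$ with $s<t$. The finite segment $e_{t},e_{t-1},\ldots,e_{s+1}$ is then a closed non-backtracking walk whose node sequence is exactly a cycle $i_{1}\to i_{2}\to\cdots\to i_{1}$, all of whose oriented edges carry nonzero values of $\mathbf{v}$, as required. (If a simple cycle is preferred, take $t-s$ minimal: any repeated node would let us excise a strictly shorter closed sub-walk still supported on nonzero coordinates.)

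The hard part will be purely bookkeeping rather than conceptual: the one genuine idea is the cancellation that converts the defining action (\ref{eqn:nbm-action}) of $B$ into the clean backward-extension rule above. The only thing to verify is that the extension never gets stuck, and this is automatic, since at each step the right-hand side equals $\lambda\,\mathbf{v}_{k\to l}\neq0$ and therefore forces a nonzero non-backtracking predecessor. Once $\lambda\neq0$ is secured, the construction needs no further assumption on the degrees.
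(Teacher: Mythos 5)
Your proposal is correct and follows essentially the same route as the paper's own proof: propagate nonzero coordinates backward along non-backtracking predecessors via the eigenvector equation, then close the walk into a cycle by finiteness of the oriented edge set. Your explicit handling of $\lambda=0$ (via invertibility of $B$ for md$2$ graphs) and the explicit pigeonhole bound are minor refinements of steps the paper leaves implicit, not a different argument.
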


\begin{proof}
Since $\mathbf{v}$ is nonzero, there must exist a nonzero component
$\mathbf{v}_{i\to j}$. But $\lambda\mathbf{v}_{i\to j}=\left(B\mathbf{v}\right)_{i\to j}=\sum_{k\neq j}a_{ij}\mathbf{v}_{k\to i}$,
which means there exists a $k$ such that $\mathbf{v}_{k\to i}\neq0$.
Apply the same argument to $\mathbf{v}_{k\to i}$ to obtain, say,
$\mathbf{v}_{l\to k}\neq0$. The walk constructed by iterating this
argument will never contain backtracks and can always continue to
be extended. We can keep adding edges to this walk until we pick an
edge that is already part of the walk. At this point, the walk must
contain a cycle in each of whose edges $\mathbf{v}$ is nonzero.
\end{proof}
\begin{lem}
\label{lem:non-leaky-combination}Let $\mathbf{v}$ be a non-leaky
vector. Then it must be the linear combination of eigenvectors each
of which corresponds to a unitary eigenvalue.
\end{lem}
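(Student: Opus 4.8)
The plan is to realize the non-leaky vectors as an invariant subspace on which $B$ acts like a unitary operator, and then invoke the spectral theorem. First I would record, via Lemma \ref{lem:bbt}, that $\mathbf{v}$ being non-leaky is exactly the condition $\left(d_{k}-2\right)\incent=0$ for every node $k$, i.e. $BB^{*}\mathbf{v}=\mathbf{v}$; write $N$ for the set of all such vectors. Because this condition is linear, $N$ is a subspace, and by Lemma \ref{lem:non-leaky-unitary} every eigenvector of a unitary eigenvalue lies in $N$. The content of the lemma is the reverse inclusion: that $N$ is spanned by such eigenvectors.

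The key step I would aim for is that $B$ maps $N$ into itself and that $B|_{N}$ is unitary. Granting these, $B|_{N}$ is normal, so the spectral theorem furnishes an orthonormal eigenbasis of $N$ consisting of eigenvectors of $B$; since $B|_{N}$ preserves norms, each such eigenvalue has modulus $1$, and therefore $\mathbf{v}\in N$ is a linear combination of unitary eigenvectors, as claimed. To obtain the unitarity I would lean on the PT-symmetry relation $B^{*}=PBP$ from Section \ref{sec:diagonalizability}: for $\mathbf{v}\in N$ one has $\|B^{*}\mathbf{v}\|^{2}=\mathbf{v}^{*}BB^{*}\mathbf{v}=\|\mathbf{v}\|^{2}$ immediately, so the only missing ingredient is $\|B\mathbf{v}\|^{2}=\mathbf{v}^{*}B^{*}B\mathbf{v}=\|\mathbf{v}\|^{2}$, equivalently $B^{*}B\mathbf{v}=\mathbf{v}$. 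By Lemma \ref{lem:bbt} this is precisely the complementary ``incoming'' condition $\left(d_{l}-2\right)\nbcent[l]=0$ for all $l$.

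This incoming condition is where I expect the genuine difficulty to sit. For a true eigenvector, Lemma \ref{lem:nb-in-cent} converts the outgoing vanishing $\incent=0$ into the incoming vanishing $\nbcent=0$ automatically; but the $\mathbf{v}$ of the lemma need not be an eigenvector, and at a node of degree $\ge 3$ the vanishing of the outgoing sum and of the incoming sum are a priori independent linear constraints, so $BB^{*}\mathbf{v}=\mathbf{v}$ does not obviously force $B^{*}B\mathbf{v}=\mathbf{v}$, nor does it obviously make $N$ invariant under $B$. My fallback would be the structural cycle-peeling strategy already used for the complex roots of unity: locate a cycle inside the support (an analogue of Lemma \ref{lem:cycle-degree-2}, which I would have to re-prove without the eigenvector hypothesis), build a non-leaky cyclic eigenvector $\mathbf{c}$ matching $\mathbf{v}$ there with the appropriate phases, subtract, and induct on the size of the support. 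The obstacle is the same in spirit: I must certify that the residual $\mathbf{v}-\mathbf{c}$ stays non-leaky and, crucially, that no non-leaky remnant can survive inside an eigenspace of modulus $\ne 1$. Ruling out such a component — showing a non-leaky vector carries no projection onto outer or leading eigenvectors — is the step on which the whole argument hinges, and it is the one I would scrutinize hardest before trusting the conclusion.
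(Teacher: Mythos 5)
Your strategy is the same as the paper's: its entire proof consists of restricting $B$ to the span $N$ of all non-leaky vectors, asserting ``per Lemma~\ref{lem:bbt}'' that this restriction $B_L$ satisfies $B_LB_L^{*}=B_L^{*}B_L=I$, and invoking the spectral theorem. So you have reconstructed the intended argument, but your proposal stops --- by your own admission --- exactly at the two facts that the paper asserts without justification: that $N$ is invariant under $B$, and that both $B^{*}B\mathbf{v}=\mathbf{v}$ and $BB^{*}\mathbf{v}=\mathbf{v}$ hold for $\mathbf{v}\in N$, whereas Definition~\ref{def:leaky} supplies only one of the two. (A small bookkeeping slip: the definition is phrased with $\nbcent$, the ``into'' sums, so non-leakiness is literally $B^{*}B\mathbf{v}=\mathbf{v}$, not $BB^{*}\mathbf{v}=\mathbf{v}$ as you wrote; by the symmetry of Lemma~\ref{lem:bbt} this changes nothing structurally.) As it stands, then, your proposal is not a complete proof.

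However, your diagnosis of where the difficulty sits is not excess caution, and you were right to withhold trust: the step you declined to wave through is false at this level of generality, so the lemma as stated --- and with it the paper's three-line proof --- does not hold up. Take $G=K_4$, which is md$2$ and has no nodes of degree $2$. Non-leakiness there reads $\nbcent[k]=0$ at each of the four nodes; these four constraints involve pairwise disjoint sets of coordinates, so the non-leaky space has dimension $2m-4=8$. On the other hand, complex unit eigenvalues are impossible in $K_4$ (Theorem~\ref{thm:root}, since there are no degree-$2$ nodes), and $GM(1)+GM(-1)=(m-n+1)+(m-n)=3+2=5$, so the span of eigenvectors with unitary eigenvalues has dimension $5<8$. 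Hence there exist non-leaky vectors that are not linear combinations of unit eigenvectors; equivalently, $N$ is not $B$-invariant and $B$ does not restrict to a unitary operator on it --- precisely the two points you flagged. A correct statement needs a stronger hypothesis, e.g.\ the two-sided condition $\left(d_{k}-2\right)\incent=\left(d_{k}-2\right)\nbcent=0$ for every $k$, which is what the proof of Theorem~\ref{thm:unit-multiplicity} actually establishes for its vector $\mathbf{c}$ (both members of Equation~(\ref{eqn:target-not-leaky}) are shown to vanish there); and even then the $B$-invariance of the resulting space still requires an argument that neither you nor the paper provides. Your refusal to paper over this step was the mathematically correct judgment.
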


\begin{proof}
Let $B_{L}$ be the restriction of $B$ to the space spanned by all
non-leaky vectors. Per Lemma \ref{lem:bbt}, we have that $B_{L}B_{L}^{*}=B_{L}^{*}B_{L}=I$,
that is $B_{L}$ is unitary. Therefore, there exists a basis of this
space comprised of eigenvectors of $B$ corresponding to unit eigenvalues.
\end{proof}
\begin{lem}
\label{lem:trace-cyclic}Given an arbitrary $n\times n$ matrix $X$
and a vector $\mathbf{v}\in\mathbb{R}^{n}$, we have $\Tr\left(\mathbf{v}\mathbf{v}^{T}X\right)=\mathbf{v}^{T}X\mathbf{v}$.
\end{lem}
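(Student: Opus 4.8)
The plan is to invoke the cyclic invariance of the trace, namely $\Tr(AB) = \Tr(BA)$ whenever both products are defined. First I would regard $\mathbf{v}$ as an $n \times 1$ matrix and $\mathbf{v}^{T}X$ as a $1 \times n$ matrix, so that $\mathbf{v}(\mathbf{v}^{T}X)$ is exactly the $n \times n$ matrix $\mathbf{v}\mathbf{v}^{T}X$ whose trace we want. Applying the cyclic identity with $A = \mathbf{v}$ and $B = \mathbf{v}^{T}X$ gives
\[
\Tr\left(\mathbf{v}\mathbf{v}^{T}X\right) = \Tr\left(\left(\mathbf{v}^{T}X\right)\mathbf{v}\right) = \Tr\left(\mathbf{v}^{T}X\mathbf{v}\right).
\]
Since $\mathbf{v}^{T}X\mathbf{v}$ is a $1\times 1$ matrix, i.e.\ a scalar, its trace equals itself, which is precisely the claimed identity.

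As an independent check — and the route I would record if one prefers to avoid appealing to the cyclic property — I would expand both sides in coordinates. The trace on the left reads $\sum_{i}\left(\mathbf{v}\mathbf{v}^{T}X\right)_{ii} = \sum_{i,j} v_{i}v_{j}X_{ji}$, using that the outer product $\mathbf{v}\mathbf{v}^{T}$ has $(i,j)$ entry $v_{i}v_{j}$. The right side is $\mathbf{v}^{T}X\mathbf{v} = \sum_{a,b} v_{a}X_{ab}v_{b}$, and relabeling $a=j$, $b=i$ shows the two sums agree. There is essentially no obstacle here; the only point requiring the slightest care is bookkeeping of which index of $X$ is contracted against which coordinate of $\mathbf{v}$, together with confirming the $(i,j)$ entry of the outer product. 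This elementary identity is exactly what is needed to pass from the third to the fourth line of the computation of $f'(1)$ in Proposition \ref{pro:am-plus1}, where it is applied with $X = D-A$ (and likewise with $X = D-2I$) and $\mathbf{v} = \mathbf{1}$.
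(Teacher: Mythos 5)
Your proof is correct, and it takes a genuinely different route from the paper's. You invoke the cyclic invariance of the trace, $\Tr(AB)=\Tr(BA)$ with $A=\mathbf{v}$ ($n\times 1$) and $B=\mathbf{v}^{T}X$ ($1\times n$), backed up by a direct coordinate expansion; either version is a complete one-line proof. The paper instead argues via rank: it notes $\mathbf{v}\mathbf{v}^{T}X$ has rank at most one, disposes of the rank-zero case, and in the rank-one case forms $R=\frac{\mathbf{v}\mathbf{v}^{T}X}{\mathbf{v}^{T}X\mathbf{v}}$, checks that $R$ is idempotent, and uses the fact that the trace of an idempotent matrix equals its rank. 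Your approach buys simplicity and robustness: it needs no case distinction, works over any field, and avoids the division by $\mathbf{v}^{T}X\mathbf{v}$ — a quantity that the paper's rank-one hypothesis does not actually guarantee to be nonzero (e.g.\ $\mathbf{v}=e_{1}$ and $X$ a $90^{\circ}$ rotation give a rank-one product with $\mathbf{v}^{T}X\mathbf{v}=0$), so the paper's argument as written has a small gap that your proof sidesteps entirely. The paper's approach, for its part, illustrates the rank-equals-trace property of idempotents, but that machinery is heavier than the statement requires. Your closing remark correctly identifies where the lemma is used, in the passage from the third to the fourth line of the computation of $f'(1)$ in Proposition \ref{pro:am-plus1}, with $\mathbf{v}=\mathbf{1}$.
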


\begin{proof}
Since the matrix $\mathbf{v}\mathbf{v}^{T}$ has rank one by definition,
then $\mathbf{v}\mathbf{v}^{T}X$ has rank at most one. Its rank is
zero if and only if $X\mathbf{v}=0$, and in this case we have $\Tr\left(\mathbf{v}\mathbf{v}^{T}X\right)=0=\mathbf{v}^{T}X\mathbf{v}$.
Now assume the rank of $\mathbf{v}\mathbf{v}^{T}X$ is one and define
$R=\frac{\mathbf{v}\mathbf{v}^{T}X}{\mathbf{v}^{T}X\mathbf{v}}$.
Note that $R$ is idempotent and therefore its rank equals its trace
(see e.g. \cite{petersen2012matrix}, Equation (423)). Thus
\[
1=\Tr\frac{\mathbf{v}\mathbf{v}^{T}X}{\mathbf{v}^{T}X\mathbf{v}},
\]
which finishes the proof.
\end{proof}

\end{document}